\numberwithin{equation}{section}
\newtheorem{theorem}{Theorem}[section]
\newtheorem{lemma}[theorem]{Lemma}
\newtheorem{corollary}[theorem]{Corollary}
\newtheorem{proposition}[theorem]{Proposition}
\newtheorem{remark}[theorem]{Remark}
\theoremstyle{definition}
\newtheorem{definition}[theorem]{Definition}
\newtheorem{assumption}[theorem]{Assumption}
\newtheorem{example}[theorem]{Example}
\begin{document}

\def\be{\begin{eqnarray}}
\def\ee{\end{eqnarray}}
\def\p{\partial}
\def\no{\nonumber}
\def\eps{\epsilon}
\def\de{\delta}
\def\De{\Delta}
\def\om{\omega}
\def\Om{\Omega}
\def\f{\frac}
\def\th{\theta}
\def\la{\lambda}
\def\lab{\label}
\def\b{\bigg}
\def\var{\varphi}
\def\na{\nabla}
\def\ka{\kappa}
\def\al{\alpha}
\def\La{\Lambda}
\def\ga{\gamma}
\def\Ga{\Gamma}
\def\ti{\tilde}
\def\wti{\widetilde}
\def\wh{\widehat}
\def\ol{\overline}
\def\ul{\underline}
\def\Th{\Theta}
\def\si{\sigma}
\def\Si{\Sigma}
\def\oo{\infty}
\def\q{\quad}
\def\z{\zeta}
\def\co{\coloneqq}
\def\eqq{\eqqcolon}
\def\bt{\begin{theorem}}
\def\et{\end{theorem}}
\def\bc{\begin{corollary}}
\def\ec{\end{corollary}}
\def\bl{\begin{lemma}}
\def\el{\end{lemma}}
\def\bp{\begin{proposition}}
\def\ep{\end{proposition}}
\def\br{\begin{remark}}
\def\er{\end{remark}}
\def\bd{\begin{definition}}
\def\ed{\end{definition}}
\def\bpf{\begin{proof}}
\def\epf{\end{proof}}
\def\bex{\begin{example}}
\def\eex{\end{example}}
\def\bq{\begin{question}}
\def\eq{\end{question}}
\def\bas{\begin{assumption}}
\def\eas{\end{assumption}}
\def\ber{\begin{exercise}}
\def\eer{\end{exercise}}
\def\mb{\mathbb}
\def\mbR{\mb{R}}
\def\mbZ{\mb{Z}}
\def\mbf{\mathbf}
\def\u{\mbf{u}}
\def\U{\mbf{U}}
\def\v{\mbf{v}}
\def\e{\mbf{e}}
\def\M {\mbf{M}}
\def\x{\mbf{x}}
\def\mc{\mathcal}
\def\mcS{\mc{S}}
\def\ms{\mathscr}
\def\lan{\langle}
\def\ran{\rangle}
\def\lb{\llbracket}
\def\rb{\rrbracket}
\def\div{\textrm{div}\,}

\title{Smooth axisymmetric transonic irrotational flows to the steady Euler equations with an external force}

\author{Shangkun Weng\thanks{School of Mathematics and Statistics, Wuhan University, Wuhan, Hubei Province, China, 430072. Email: skweng@whu.edu.cn}\and  \and Yan Zhou\thanks{School of Mathematics and Statistics, Wuhan University, Wuhan, Hubei Province, China, 430072. Email: yanz0013@whu.edu.cn}}
\date{}


\pagestyle{myheadings} \markboth{Smooth axi-symmetric transonic irrotational flows}{Smooth axi-symmetric transonic irrotational flows}\maketitle

\begin{abstract}
  For a class of external forces, we prove the existence and uniqueness of smooth transonic flows to the one dimensional steady Euler system with an external force, which is subsonic at the inlet and flows out at supersonic speed after smoothly accelerating through the sonic point. We then investigate the structural stability of the one-dimensional smooth transonic flows with positive acceleration under axisymmetric perturbations of suitable boundary conditions, and establish the first existence and uniqueness result for smooth axisymmetric transonic irrotational flows. The key point lies on the analysis of a linear second order elliptic-hyperbolic mixed differential equation of Keldysh type with a singular term. Some weighted Sobolev spaces $H_r^m(D) (m=2,3,4)$ are introduced to deal with the singularities near the axis. Compared with the stability analysis in the two dimensional case by Weng and Xin (arXiv:2309.07468), there are several interesting new observations about the structure of the linear mixed type equation with a singular term which play crucial roles in establishing the $H^4_r(D)$ estimate.


\end{abstract}

\begin{center}
\begin{minipage}{5.5in}
Mathematics Subject Classifications 2020: 35M12, 76H05, 76N10, 76N15, 35L67.\\
Key words: axisymmetric transonic flow, elliptic-hyperbolic mixed, multiplier, singularity, weighted norms.
\end{minipage}
\end{center}

\section{Introduction and main results}\label{1df}\noindent

In this paper, we concern the smooth axisymmetric transonic irrotational flows to the steady isentropic compressible inviscid flow with an external force:
\begin{eqnarray}\label{2deuler-force}
\begin{cases}
\div (\rho \u) =0,\\
\div (\rho \u \otimes \u + P  I_3) = \rho \na \Phi,
\end{cases}
\end{eqnarray}
where $\u = (u_1, u_2, u_3)$, $\rho $ represent the velocity and density respectively, $P = \rho^{\ga}$ with $\ga >1$ is the pressure, $\Phi $ is the potential force. Denote the Bernoulli's quantity $B = \f 12 |\u|^2 + h (\rho) - \Phi $ with the enthalpy $h (\rho) = \f {\ga }{\ga - 1 } \rho^{\ga -1} $. 

Recently, Weng and Xin \cite{WX23} studied smooth transonic flows with nonzero vorticity in de Laval nozzles for a quasi two dimensional steady Euler flow model which generalizes the classical quasi one dimensional model. They first proved the existence and uniqueness of smooth transonic flows to the quasi one-dimensional model, which start from a subsonic state and accelerate to reach a sonic state at the throat and then become supersonic. These flows may have positive or zero acceleration at their sonic points and the degeneracy types near the sonic point are classified precisely. They further proved the existence and uniqueness of smooth transonic flow with nonzero vorticity to the quasi two dimensional model adjacent to the one dimensional smooth transonic flows with positive accelerations. The sonic curve may not locate at the throat of the nozzle.

\subsection{Transonic flows to one dimensional steady Euler system with an external force}\noindent

Motivated by \cite{WX23}, we study smooth transonic flows with nonzero vorticity to steady Euler system with nontrivial external forces. First, we identify a class of external forces and establish the existence and uniqueness of one dimensional smooth transonic flows to \eqref{2deuler-force}. Namely, we solve the problem
\begin{eqnarray}\label{1df}\begin{cases}
(\bar{\rho} \bar{u})'(x_1)=0, \ \ \forall x_1\in [L_0,L_1],\\
\bar{\rho} \bar{u} \bar{u}'+ \frac{d}{dx_1} P(\bar{\rho})= \bar{\rho} \bar{f}(x_1), \ \ \forall x_1\in [L_0,L_1],\\
\bar{\rho}(L_0)=\rho_0>0,\ \ \bar{u}(L_0)= u_0>0,
\end{cases}\end{eqnarray}
where  $\bar{f} (x_1) $ is a given infinitely differentiable function on $[L_0,L_1]$ $(L_0 < 0 < L_1)$ and the initial state at $x_1=L_0$ is subsonic, that is $u_0^2<c^2(\rho_0)=\gamma \rho_0^{\gamma-1}$. 

Our goal is to find suitable assumptions on $\bar{f}$ and boundary data $(\rho_0, u_0)$ such that there exists a smooth accelerating transonic flow with the sonic state occurring on the point $x_1=0$. Inspired by \cite{WX23}, we assume that the external force satisfies
\be\label{1df04}\begin{cases}
\bar{f}(x_1)<0,\ \forall x\in [L_0,0),\\
\bar{f}(0)=0,\\
\bar{f}(x_1)>0, \ \ \forall x_1\in (0,L_1].
\end{cases}\ee
Denote $J= \bar{\rho} \bar{u}=\rho_0 u_0>0$. The Bernoulli's law yields that
\begin{eqnarray}\label{1df2}
\frac{1}{2}(\bar{u}(x_1))^2+ \frac{\gamma}{\gamma-1} \bar{\rho}^{\gamma-1} -\int_{L_0}^{x_1} \bar{f}(\tau) d\tau\equiv B_0 \co \frac{1}{2} u_0^2 + \frac{\gamma}{\gamma-1} \bar{\rho}_0^{\gamma-1} .
\end{eqnarray}
Suppose the flow becomes sonic at $x_1=0$, i.e. $\bar{u}^2({0})=c^2(\bar{\rho}(0))=\gamma \bar{\rho}(0)^{\gamma-1}=\gamma (\frac{J}{\bar{u}(0)})^{\gamma-1}$, then
\be\no
\bar{u}(0)=\gamma^{\frac{1}{\gamma+1}} J^{\frac{\gamma-1}{\gamma+1}}=: c_*.
\ee
Therefore one can conclude from \eqref{1df2} that
\be\label{1df90}
\int_{L_0}^{0} \bar{f}(\tau) d\tau= \frac{\gamma+1}{2(\gamma-1)} \gamma^{\frac{2}{\gamma+1}} J^{\frac{2(\gamma-1)}{\gamma+1}}- B_0.
\ee


\begin{proposition}\label{1dfge}({\bf General accelerating transonic flows.}) Assume that $(u_0, \rho_0)$ is subsonic and the external force $\bar{f}$ satisfies \eqref{1df04} and \eqref{1df90}. Then there exists a unique accelerating transonic flow $(\bar{u}(x_1), \bar{\rho} (x_1)) \in C ([L_0, L_1])$ which is subsonic in $[L_0, 0)$, supersonic in $(0, L_1]$ with a sonic state at $x_1 = 0$. Furthermore, $(\bar{u}(x_1), \bar{\rho} (x_1))$ is smooth and satisfies the equations \eqref{1df} on $[L_0, 0) \cup (0, L_1]$.
\end{proposition}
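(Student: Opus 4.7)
The plan is to reduce \eqref{1df} to a single algebraic equation via mass conservation and Bernoulli's law and then read off existence and uniqueness from the structure of an auxiliary strictly convex function. Since $(\bar\rho\bar u)'=0$ gives $\bar\rho\bar u\equiv J$, one has $\bar\rho=J/\bar u$, and \eqref{1df2} takes the form
\begin{equation*}
G(\bar u(x_1))=H(x_1),\qquad G(u):=\tfrac12 u^2+\tfrac{\gamma}{\gamma-1}J^{\gamma-1}u^{1-\gamma},\qquad H(x_1):=B_0+\int_{L_0}^{x_1}\bar f(\tau)\,d\tau.
\end{equation*}
A direct computation I would record shows that $G\in C^\infty((0,\infty))$ is strictly convex with $G'(u)=u-\gamma J^{\gamma-1}u^{-\gamma}$, attaining its unique minimum at $u=c_*=(\gamma J^{\gamma-1})^{1/(\gamma+1)}$ with $G(c_*)=\frac{\gamma+1}{2(\gamma-1)}\gamma^{2/(\gamma+1)}J^{2(\gamma-1)/(\gamma+1)}$. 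Consequently, the restrictions $G|_{(0,c_*]}$ and $G|_{[c_*,\infty)}$ are smooth bijections onto $[G(c_*),\infty)$, whose inverses $\Psi_{\mathrm{sub}}$ and $\Psi_{\mathrm{sup}}$ are smooth on $(G(c_*),\infty)$ by the inverse function theorem.

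Next I would exploit \eqref{1df04} to locate the range of $H$: it is smooth on $[L_0,L_1]$, strictly decreasing on $[L_0,0]$ and strictly increasing on $[0,L_1]$, so it attains a unique minimum at $0$. Since $H(L_0)=B_0=\tfrac12 u_0^2+\tfrac{\gamma}{\gamma-1}\rho_0^{\gamma-1}=G(u_0)$ and, by \eqref{1df90}, $H(0)=G(c_*)$, one obtains $H(x_1)\ge G(c_*)$ on $[L_0,L_1]$ with equality only at $x_1=0$. Hence $H$ restricts to a smooth decreasing bijection $[L_0,0]\to[G(c_*),G(u_0)]$ and a smooth increasing bijection $[0,L_1]\to[G(c_*),H(L_1)]$.

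On $[L_0,0)$ I would define $\bar u(x_1):=\Psi_{\mathrm{sub}}(H(x_1))\in[u_0,c_*)$, and on $(0,L_1]$ define $\bar u(x_1):=\Psi_{\mathrm{sup}}(H(x_1))\in(c_*,\infty)$. Each piece is smooth and strictly increasing as a composition of two strictly monotone smooth maps, and both tend to $c_*$ as $x_1\to 0^\pm$, so setting $\bar u(0):=c_*$ gives $\bar u\in C([L_0,L_1])$ and the accelerating property. Putting $\bar\rho:=J/\bar u$, the first equation of \eqref{1df} is automatic. Differentiating $G(\bar u)=H$ and using the identity $uG'(u)=u^2-\gamma(J/u)^{\gamma-1}$, which rewrites as $\bar u G'(\bar u)=\bar u^2-c^2(\bar\rho)$, yields $(\bar u^2-c^2(\bar\rho))\bar u'=\bar u\bar f$; multiplying by $\bar\rho/\bar u$ recovers the momentum equation on $[L_0,0)\cup(0,L_1]$.

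For uniqueness, any $C([L_0,L_1])$ solution with the given subsonic data must satisfy $G(\bar u)=H$ and $\bar\rho=J/\bar u$ pointwise. Since $G$ is two-to-one away from $c_*$, the only freedom is the branch on each connected component of $\{x_1:\bar u(x_1)\ne c_*\}$; continuity together with $\bar u(L_0)=u_0<c_*$ forces the subsonic branch on $[L_0,0)$, and attainment of the sonic state at $x_1=0$ followed by an accelerating supersonic continuation forces $\Psi_{\mathrm{sup}}$ on $(0,L_1]$. The main conceptual point---and the only genuine obstacle---is recognizing the hypothesis \eqref{1df90} as precisely the compatibility condition $H(0)=G(c_*)$ which makes the subsonic and supersonic branches meet continuously at the sonic point; once this is in hand, the rest is elementary calculus on the convex function $G$ and the primitive $H$.
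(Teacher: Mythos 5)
Your proposal is correct and follows essentially the same route as the paper: the paper's function $F(x_1,t;J)$ is exactly your $G(t)-H(x_1)$, and the paper likewise uses the convexity/monotonicity of the $t$-part together with the sign condition \eqref{1df04} and the normalization \eqref{1df90} to get exactly two roots off the sonic point and one at it, then the implicit function theorem (your inverse function theorem for $G$) for smoothness away from $x_1=0$. Your endpoint-continuity argument for the monotone inverses replaces the paper's explicit $(1\pm\delta)t_*$ estimate, but this is a cosmetic difference.
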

\begin{proof}
  For smooth solutions, the problem \eqref{1df} is equivalent to
  \be\no
  F (x_1, \bar{u}(x_1); J) = 0, \q \bar{\rho } (x_1) = \f {J}{\bar{u} (x_1)},
  \ee
  where
  \be\no
  F(x_1, t; J) = \f 12 t^2 + \f {\ga J^{\ga - 1}}{\ga - 1} \f 1{t^{\ga - 1}} - \int_{L_0}^{x_1} \bar{f} (\tau) d \tau - B_0.
  \ee
  Fixing $x_1 \in [L_0, L_1]$, then on $(0, + \oo)$, $F(x_1, t;J)$ attains its minimum value at $t = t_* (x_1) = (\ga J^{\ga - 1})^{\f 1{\ga + 1}}$. For any $x_1 \in [L_0, 0) \cup (0, L_1]$, \eqref{1df04} leads to
  \be\no
  F (x_1, t_* (x_1); J)  =  \f {\ga + 1}{2 (\ga - 1)} (\ga J^{\ga - 1})^{\f 2{\ga + 1}} - \int_{L_0}^{x_1} \bar{f} (\tau) d \tau - B_0
   =  \int_{x_1}^0 \bar{f} (\tau) d \tau < 0.
  \ee
  From $\ga > 1$, one has $\lim_{ t \to 0^+} F (x_1, t;J) = \lim_{t \to +\oo} F (x_1, t; J) = + \oo$, and that $F (x_1, t; J)$ is monotone decreasing in $(0, t_* (x_1)]$ and monotone increasing in $[t_* (x_1), +\oo)$. Thus for each $x_1 \in [L_0, 0) \cup (0, L_1]$, $F (x_1, t; J) = 0$ has exactly two solutions $0 < t_{sub} (x_1) < t_* < t_{sup} (x_1) < + \oo$. For $x_1 = 0$, $F (0, t; J) = 0$ has exactly one solution $c_* = t_* (0)$. Define the function $\bar{u} (x_1)$ as follows:
  \be\no
  \bar{u} (x_1) = \begin{cases}
                    t_{sub} (x_1), & \forall x_1 \in [L_0, 0), \\
                    t_* (0), & x_1 = 0, \\
                    t_{sup} (x_1), & \forall x_1 \in (0, L_1].
                  \end{cases}
  \ee
  For any $\de \in (0, 1)$, one has
  \begin{eqnarray*}
  F (x_1, (1 \pm \de)t_* (x_1);J) & = & (\ga J^{\ga - 1})^{\f 2{\ga + 1}} \b( \f {(1 \pm \de)^2}2 + \f {(1 \pm \de)^{1 - \ga}}{\ga - 1} \b) - (\ga J^{\ga - 1})^{\f 2 {\ga + 1}} \b(\f 12 + \f 1{\ga - 1}\b) \\\no
  & = & (\ga J^{\ga - 1})^{\f 2{\ga + 1}} (\f {\ga + 1}2 \de^2 + O(|\de|^3)) > 0.
  \end{eqnarray*}
  This implies that
  \be\no
  (1 - \de) t_* (x_1) < t_{sub } (x_1) < t_* (x_1) < t_{sup} (x_1) < (1 + \de) t_*(x_1).
  \ee
  Hence $\lim_{x_1 \to 0} t_{sub} (x_1) = \lim_{x_1 \to 0} t_{sup} (x_1) = \lim_{x_1 \to 0} t_* (x_1) = t_* (0)$, and $(\bar{u} (x_1), \bar{\rho} (x_1) = \f {J}{\bar{u} (x_1)}) \in (C([L_0, L_1]))^2$ and is subsonic in $[L_0, 0)$, supersonic in $(0, L_1]$, and the sonic point is $x_1 = 0$. Furthermore, for each $x_1 \in [L_0, 0) \cup (0, L_1]$, one has
  \begin{equation*}
  \p_t F (x_1, t_{sub} (x_1);J) < 0, \q \p_t F (x_1, t_{sup} (x_1);J) > 0.
  \end{equation*}
  By the implicit function theorem, $(\bar{u}(x_1), \bar{\rho} (x_1))$ is smooth and satisfies the equations \eqref{1df} on $[L_0, 0) \cup (0, L_1]$. However, we have no information on the differentiability properties for $(\bar{u}(x_1), \bar{\rho} (x_1))$ at the sonic point $x_1 = 0$.
\end{proof}


We then try to improve the regularity of the flow $(\bar{u}(x_1), \bar{\rho} (x_1))$ obtained in Proposition \ref{1dfge} at the sonic point $x_1 = 0$. It follows from \eqref{1df} that
\be\label{1df1}\begin{cases}
\bar{\rho}(x_1)=\frac{J}{\bar{u}(x_1)},\\
(\bar{u}^{2}-c^2(\bar{\rho})) \bar{u}'=(\bar{u}^2- c_*^{\gamma+1}\bar{u}^{-\gamma+1}) \bar{u}'= \bar{u}\bar{f}.
\end{cases}\ee
Suppose that $(\bar{u}(x_1), \bar{\rho} (x_1))$ is smooth at $x_1=0$, then it satisfies \eqref{1df} at $x_1 = 0$ as well. Differentiating \eqref{1df1} and evaluating at $x_1=0$ yields that
\begin{equation*}
  \bar{f}'(0)=(\gamma+1)(\bar{u}'(0))^2\geq 0.
\end{equation*}

Consider the case where the smooth transonic flows have positive acceleration at the sonic point:
\be\label{1df040}
\bar{f}'(0)>0.
\ee


\begin{proposition}\label{1dfpo}({\bf Smooth transonic flows with positive acceleration at the sonic point.}) Assume the initial data $(u_0, \rho_0)$ is subsonic and the external force satisfies \eqref{1df04}, \eqref{1df90} and \eqref{1df040}. The problem \eqref{1df} has a unique smooth solution $(\bar{\rho}(x_1), \bar{u}(x_1))\in
C^{\infty}([L_0,L_1])$ which is subsonic in $[L_0,0)$, supersonic in $(0, L_1]$ with a sonic state at $x_1=0$.
\end{proposition}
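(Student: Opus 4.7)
\medskip

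\noindent\textbf{Proof proposal.} The plan is to upgrade the continuous transonic solution from Proposition \ref{1dfge} to a $C^\infty$ solution at $x_1=0$ by solving the implicit equation $F(x_1,\bar u(x_1);J)=0$ after a blow-up substitution that desingularizes the double degeneracy. Since $\bar f\in C^\infty([L_0,L_1])$, the function $F$ introduced in the proof of Proposition \ref{1dfge} is $C^\infty$ in $(x_1,t)$. The smoothness away from $x_1=0$ is already known, so the task is purely local at the sonic point.

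First, I would compute the Taylor jet of $F$ at $(x_1,t)=(0,c_*)$. Using $c_*^2=\gamma\bar\rho(0)^{\gamma-1}$, \eqref{1df04} and \eqref{1df90}, one checks that
\[
F(0,c_*;J)=0,\qquad \p_{x_1}F(0,c_*;J)=-\bar f(0)=0,\qquad \p_tF(0,c_*;J)=c_*-c_*=0,
\]
while the second order derivatives give $\p_t^2F(0,c_*;J)=\gamma+1$, $\p_{x_1}\p_tF(0,c_*;J)=0$, and $\p_{x_1}^2F(0,c_*;J)=-\bar f'(0)<0$ by \eqref{1df040}. Setting $w=t-c_*$, Hadamard's lemma gives smooth functions $A,B,C$ near the origin with
\[
F(x_1,c_*+w;J)=x_1^2\,A(x_1,w)+x_1w\,B(x_1,w)+w^2\,C(x_1,w),
\]
and $A(0,0)=-\tfrac12\bar f'(0)$, $B(0,0)=0$, $C(0,0)=\tfrac{\ga+1}{2}$.

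Next, introduce the blow-up $w=x_1\phi$, which is natural since the formal computation $\bar f'(0)=(\ga+1)(\bar u'(0))^2$ already displayed in the text forces $w\sim a\,x_1$ with $a=\sqrt{\bar f'(0)/(\ga+1)}$. This substitution yields $F(x_1,c_*+x_1\phi;J)=x_1^2\,\wti F(x_1,\phi)$ with
\[
\wti F(x_1,\phi)\co A(x_1,x_1\phi)+\phi\,B(x_1,x_1\phi)+\phi^2\,C(x_1,x_1\phi)
\]
smooth in $(x_1,\phi)$. At the point $(0,a)$ we find $\wti F(0,a)=-\tfrac12\bar f'(0)+\tfrac{\ga+1}{2}a^2=0$ and $\p_\phi\wti F(0,a)=(\ga+1)a\neq 0$. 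The classical implicit function theorem therefore produces a unique $C^\infty$ function $\phi(x_1)$ defined on a neighborhood of $0$ with $\phi(0)=a$ and $\wti F(x_1,\phi(x_1))\equiv 0$. Setting $\bar u(x_1)\co c_*+x_1\phi(x_1)$ gives a smooth solution of $F(x_1,\bar u(x_1);J)=0$, and the branch corresponds to the positive root $a>0$, i.e.\ to acceleration through the sonic point; the other root $-a$ would give the decelerating solution which is excluded by the sign of $\bar u'(0)$ inherited from \eqref{1df04}.

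Finally, I would glue this local smooth solution to the global continuous one from Proposition \ref{1dfge}. Since $\bar u(x_1)=c_*+ax_1+O(x_1^2)$ satisfies $\bar u<c_*$ for $x_1<0$ and $\bar u>c_*$ for $x_1>0$, it lies on the subsonic (resp.\ supersonic) branch of $F(x_1,\cdot\,;J)=0$ near the sonic point, and by the uniqueness of these branches established in Proposition \ref{1dfge} it must coincide with $t_{sub}(x_1)$ for $x_1\in[L_0,0)$ and with $t_{sup}(x_1)$ for $x_1\in(0,L_1]$. Uniqueness of the $C^\infty$ extension follows from the uniqueness statement in the implicit function theorem combined with the fixed initial data. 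The main delicate point is the simultaneous vanishing of $\p_tF$ and $\p_{x_1}F$ at $(0,c_*)$, which prevents a direct application of the implicit function theorem; the substitution $w=x_1\phi$, whose correct power of $x_1$ is dictated by the sign condition \eqref{1df040}, is the key trick that converts the degenerate equation into a non-degenerate one.
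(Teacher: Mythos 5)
Your proposal is correct and follows essentially the same route as the paper: the blow-up substitution $t=c_*+x_1\phi$ to desingularize the doubly degenerate equation $F=0$, followed by the implicit function theorem at the point $(0,\sqrt{\bar f'(0)/(\gamma+1)})$, and gluing to the continuous branches of Proposition \ref{1dfge}. The only (minor, and arguably cleaner) difference is that you invoke Hadamard's lemma to make the smoothness of the blown-up function $\wti F$ manifest, whereas the paper divides the Taylor remainder by $x_1^2$ and works with a square-root reformulation $H(x_1,y)=y-\sqrt{\nu^2-\tfrac{2}{\gamma+1}G_1}$, verifying $\p_yH(0,\nu)=1$ by a limit computation.
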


\begin{proof}


It remains to show the solution given in Proposition \ref{1dfge} passes through the sonic point $x_1 = 0$ smoothly. This will be proved by a reduction of degeneracy near $x_1=0$ and the implicit function theorem. It is easy to see that $F(0,c_*;J)=\frac{\p F}{\p t}(0,c_*;J)=\frac{\p F}{\p x_1}(0,c_*; J)=0$ and
\begin{eqnarray}\no
\frac{\p^2 F}{\p t^2}(0,c_*;J)=1+\gamma, \ \  \frac{\p^2 F}{\p t \p x_1}(0,c_*;J)= 0,\ \  \frac{\p^2 F}{\p x_1^2}(0,c_*;J)=- \bar{f}'(0)<0.
\end{eqnarray}
According to Taylor's expansion, there holds
\begin{equation}\label{1df6}
F(x_1, t ;J)= \frac{1}{2}(1+\gamma) (t-c_*)^2- \frac{1}{2} \bar{f}'(0) x_1^2+ G(x_1,t-c_*),
\end{equation}
where
\begin{equation*}
|G(x_1, t-c_*)|\leq C_1 (|t-c_*|^3+ |x_1|^3),\ \  \text{for any } |t-c_1|+|x_1|\leq \sigma_1,
\end{equation*}
with some positive constants $C_1$ and $\sigma_1$.

We write $t = c_* + x_1 y$ and reformulate $F (x_1, t; J) =0$ as
\begin{equation*}
y^2-\frac{\bar{f}'(0)}{\gamma+1}  + \frac{2}{\gamma+1} G_1(x_1, y)=0,
\end{equation*}
where
\begin{equation}\label{ldf8}
|G_1(x_1,y)|=\left|\frac{1}{x_1^2}G(x_1,x_1y)\right|\leq C_1(|x_1| |y|^3+ |x_1|),\ \ \text{for any }|x_1|\leq \sigma_1.
\end{equation}
Thus
\begin{equation}\label{ldf9}
H (x_1, y) \co y-\sqrt{\nu^2 - \frac{2}{\gamma+1} G_1(x_1, y)}=0,
\end{equation}
where $\nu = \sqrt{\frac{\bar{f}'(0)}{\gamma+1}}$. According to \eqref{ldf8}, $H (0, \nu) = 0$. Since $\p_y G_1 (x_1, y) = \f 1{x_1} \p_t G (x_1, x_1 y)$, where
\begin{equation*}
\p_t G (x_1, t - c_*) = - \ga \nu x_1 + c_* - \f {\ga J^{\ga - 1}}{t^{\ga }},
\end{equation*}
then
\be\no
&& \f {\p H}{\p y} (0, \nu) = 1 + \f 1 {(\ga + 1) \nu} \lim_{x_1 \to 0^+} (\f 1 {x_1} \f{\p G}{\p t} (x_1, \nu x_1)) \\\no
&& = 1 + \f 1 {(\ga + 1) \nu} [- \ga \nu + \lim_{x_1 \to 0^+} \f {c_* - (\ga J^{\ga - 1}) (\nu x_1 + c_*)^{- \ga }}{x_1}] = 1.
\ee
Thus the implicit function theorem implies that the existence and uniqueness of a smooth function $y=y(x_1)$ defined on the
interval $[-\sigma_2, \sigma_2]$ for some $0 < \si_2 \le \si_1$ such that \eqref{ldf9} holds. Moreover, the function $\bar{u}_1(x_1):= c_* + x_1 y(x_1) \in C^{\oo} ([L_0, L_1])$ is the solution of the equation \eqref{1df2} on the interval $[-\sigma_2, \sigma_2]$ and $(\bar{u} (x_1), \f J{\bar{u} (x_1)})$ is subsonic in $[- \si_2, 0)$, supersonic in $(0, \si_2]$. By the uniqueness of solutions to \eqref{1df2}, one has $( \bar{u}, \bar{\rho}) \equiv (\bar{u}_1 (x_1), \f J{\bar{u}_1 (x_1)})$ on $[- \si_2, \si_2]$. The proof is completed.

\end{proof}

The following Lemma gives the properties of the smooth transonic flows constructed in Proposition \ref{1dfpo}, which play a crucial role in the subsequent stability analysis. Define
\begin{equation}\label{5}
\bar{k}_{11} (x_1) =
\f {c^2(\bar{\rho }) - \bar{u}^2}{c^2 (\bar{\rho })},\ \ \ \bar{k}_1 (x_1) = \f {\bar{f} - (\ga +1) \bar{u} \bar{u}^{\prime}}{c^2 (\bar{\rho })} = \f {\bar{f} (c^2 (\bar{\rho }) + \ga \bar{u}^2)}{c^2 (\bar{\rho }) (c^2 (\bar{\rho }) - \bar{u}^2)}.
\end{equation}

\begin{lemma}\label{bkg-coe}
  For the flows given in Proposition \ref{1dfpo}, there exists a positive number $\kappa_* > 0$ such that
  \begin{equation}\label{6}
  2 \bar{k}_1 (x_1) + (2j -1) \bar{k}_{11}^{\prime} (x_1) \le - \kappa_*, \, j = 0, 1, 2,3, \ \forall x_1\in [L_0,L_1].
  \end{equation}
  Thus there also exists another positive number $d_0 >0$, such that $d(x_1) = 6 (x_1 - d_0) < 0$ and
  \begin{equation}\label{7}
  (\bar{k}_1 + j \bar{k}_{11}^{\prime}) d - \f 12 (\bar{k}_{11}d)^{\prime} \ge 4, \, \forall j = 0, 1, 2, 3,\ \forall x_1\in [L_0,L_1].
  \end{equation}
\end{lemma}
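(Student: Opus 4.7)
The plan is to first extract a clean identity that relates $\bar{k}_1$ and $\bar{k}_{11}'$ and simultaneously shows both are smooth on the full interval $[L_0, L_1]$, including at the sonic point. From the reformulation of \eqref{1df} in \eqref{1df1}, one has $\bar{u}' = \bar{u}\bar{f}/(\bar{u}^2 - c^2(\bar{\rho}))$ on $[L_0, 0) \cup (0, L_1]$, and combining the continuity identity $\bar{\rho}' = -\bar{\rho}\bar{u}'/\bar{u}$ with $c^2(\bar{\rho}) = \gamma \bar{\rho}^{\gamma-1}$ gives $(c^2)'/c^2 = -(\gamma-1)\bar{u}'/\bar{u}$. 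Differentiating $\bar{k}_{11} = 1 - \bar{u}^2/c^2$ then yields
\[
\bar{k}_{11}'(x_1) = -\frac{(\gamma+1)\bar{u}\bar{u}'}{c^2(\bar{\rho})},
\]
first on $[L_0, 0) \cup (0, L_1]$ and then, by the smoothness provided by Proposition~\ref{1dfpo}, on all of $[L_0, L_1]$. Subtracting from the first form of $\bar{k}_1$ in \eqref{5} produces the key identity $\bar{k}_1 = \bar{k}_{11}' + \bar{f}/c^2(\bar{\rho})$, which exhibits $\bar{k}_1$ as smooth on $[L_0, L_1]$ despite the $0/0$ appearance of its second form at the sonic point.

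For \eqref{6}, substitute to get $2\bar{k}_1 + (2j-1)\bar{k}_{11}' = (2j+1)\bar{k}_{11}' + 2\bar{f}/c^2$, then reinsert $\bar{u}' = \bar{u}\bar{f}/(\bar{u}^2 - c^2)$ to obtain, on $[L_0, 0) \cup (0, L_1]$,
\[
2\bar{k}_1(x_1) + (2j-1)\bar{k}_{11}'(x_1) = \frac{\bar{f}\bigl[(2 - (2j+1)(\gamma+1))\bar{u}^2 - 2c^2(\bar{\rho})\bigr]}{c^2(\bar{\rho})(\bar{u}^2 - c^2(\bar{\rho}))}.
\]
Because $\gamma > 1$ and $j \in \{0,1,2,3\}$, one has $(2j+1)(\gamma+1) > 2$, so the bracket is strictly negative. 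On the subsonic side, $\bar{f} < 0$ and $\bar{u}^2 - c^2 < 0$; on the supersonic side, $\bar{f} > 0$ and $\bar{u}^2 - c^2 > 0$; in both cases the prefactor is positive, so the whole expression is strictly negative. At the sonic point, the Taylor data $\bar{u}(x_1) = c_* + \nu x_1 + O(x_1^2)$ with $\nu = \sqrt{\bar{f}'(0)/(\gamma+1)} > 0$ from Proposition~\ref{1dfpo} give $\bar{k}_{11}'(0) = \bar{k}_1(0) = -(\gamma+1)\nu/c_*$, so the combination equals $-(2j+1)(\gamma+1)\nu/c_* < 0$. Thus $2\bar{k}_1 + (2j-1)\bar{k}_{11}'$ is continuous and strictly negative on the compact interval $[L_0, L_1]$ for each admissible $j$, hence uniformly bounded above by some $-\kappa_* < 0$.

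For \eqref{7}, expand $(\bar{k}_{11} d)' = \bar{k}_{11}' d + 6\bar{k}_{11}$ (since $d' = 6$) and regroup to obtain
\[
(\bar{k}_1 + j\bar{k}_{11}')\,d - \tfrac{1}{2}(\bar{k}_{11} d)' = \tfrac{1}{2}\bigl(2\bar{k}_1 + (2j-1)\bar{k}_{11}'\bigr)\,d - 3\bar{k}_{11}.
\]
Choose $d_0 > L_1$ so that $d(x_1) = 6(x_1 - d_0) < 0$ on $[L_0, L_1]$. Using \eqref{6} and the sign of $d$ gives $\tfrac{1}{2}(2\bar{k}_1 + (2j-1)\bar{k}_{11}')\,d \geq 3\kappa_*(d_0 - x_1) \geq 3\kappa_*(d_0 - L_1)$, while $|3\bar{k}_{11}|$ is bounded on $[L_0, L_1]$ by some $3M$. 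Taking $d_0$ large enough that $3\kappa_*(d_0 - L_1) - 3M \geq 4$ then yields \eqref{7} uniformly for $j = 0, 1, 2, 3$. The only real subtlety is the removable singularity of $\bar{k}_1$ at the sonic point, which is dissolved once the identity $\bar{k}_1 = \bar{k}_{11}' + \bar{f}/c^2$ is in hand; everything after that is sign-counting and the choice of $d_0$.
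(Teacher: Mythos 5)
Your proof is correct and follows essentially the same route as the paper's: the same sign analysis of $\bar{f}\,/\,(c^2(\bar\rho)(c^2(\bar\rho)-\bar{u}^2))$ away from the sonic point, the same continuous extension through $x_1=0$ (the paper evaluates $\lim_{x_1\to 0}\bar{f}/(c^2(\bar\rho)-\bar{u}^2)$ where you Taylor-expand $\bar{u}$, which amounts to the same computation), and the identical regrouping $(\bar{k}_1+j\bar{k}_{11}')d-\tfrac12(\bar{k}_{11}d)'=\tfrac12(2\bar{k}_1+(2j-1)\bar{k}_{11}')d-3\bar{k}_{11}$ with $d_0$ taken large. The only difference is that you explicitly derive the identities $\bar{k}_{11}'=-(\gamma+1)\bar{u}\bar{u}'/c^2(\bar\rho)$ and $\bar{k}_1=\bar{k}_{11}'+\bar{f}/c^2(\bar\rho)$, which the paper absorbs into the second expression for $\bar{k}_1$ already stated in \eqref{5}.
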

\begin{proof}
  Since for any $x_1 \in [L_0, 0) \cup (0, L_1]$,
\begin{equation*}
2 \bar{k}_1 + (2j -1 ) \bar{k}_{11}^{\prime}
 = \f {\bar{f} (2 c^2 (\bar{\rho }) + ((2\ga + 2)j+ \ga -1) \bar{u}^2 )}{c^2 (\bar{\rho }) (c^2 (\bar{\rho }) - \bar{u}^2)} < 0,
\end{equation*}
and
\begin{equation*}
\lim_{x_1 \to 0} \f {\bar{f}}{c^2 (\bar{\rho }) - \bar{u}^2} = - \sqrt{\f {\bar{f}^{\prime} (0)}{(\ga + 1)c_{*}^2}} < 0.
\end{equation*}
Then a positive constant $\kappa_* > 0$ exists such that \eqref{6} holds. Let $d(x_1) = 6 (x_1 - d_0)$, one can select a large $d_0 > L_1$ such that $d(x_1) < 0$ for every $x_1 \in [L_0, L_1]$ and
\be\no
&& (\bar{k}_1 + j \bar{k}_{11}^{\prime}) d - \f 12 (\bar{k}_{11}d)^{\prime}
= \f 12 (2 \bar{k}_1 + (2j -1) \bar{k}_{11}^{\prime})d - \f 12 d^{\prime} \bar{k}_{11}\\\no
&& = 3 (2 \bar{k}_1 + (2j -1) \bar{k}_{11}^{\prime}) (x_1 - d_0) - 3 \bar{k}_{11} (x_1)\\\no
&& \ge 3 (d_0 - x_1) \kappa_* - 3 \bar{k}_{11} (x_1) \ge 4, \q \forall j = 0, 1, 2, 3.
\ee
\end{proof}

Next, we consider the case where $\bar{f}' (0) = 0$. Suppose that the transonic flow is smooth near $x_1=0$, then $\bar{u}'(0)=0$. This, together with the fact $\bar{u}'(x_1)>0$ for any $x_1\in [L_0,0)\cup (0, L_1]$, implies that $\bar{u}''(0)=0$. The second equation in \eqref{1df1} can be rewritten as $\bar{f}(x_1)= D(x_1) \bar{u}'(x_1)$, where $D(x_1)=\bar{u}- c_*^{\gamma+1}\bar{u}^{-\gamma}$. This gives $D(0)=D'(0)=D''(0)=0, D^{(3)}(0)=(\gamma+1)\bar{u}^{(3)}(0)$ by simple calculations and further $\bar{f}''(0)=\bar{f}^{(3)}(0)=\bar{f}^{(4)}(0)=0$ holds. In addition, one has
\begin{equation*}
\bar{f}^{(5)}(0)=10(\gamma+1)(\bar{u}^{(3)}(0))^2\geq 0.
\end{equation*}

If $\bar{f}^{(5)}(0)>0$, there exists a unique smooth accelerating transonic flow $(\bar{u},\bar{\rho})$ to \eqref{1df} with \eqref{1df1}, \eqref{1df04} and $\bar{f}'(0)=\cdots=\bar{f}^{(4)}(0)=0$. Since the proof is similar, we omit it.

\begin{proposition}\label{1dfzero1}({\bf Smooth transonic flows with zero acceleration at the sonic point: case 1.}) Assume the initial data $(u_0, \rho_0)$ is subsonic, the external force satisfies \eqref{1df04}, \eqref{1df90} and for some nonnegative integer $m\in \mathbb{N}$ the following holds
\begin{equation}\label{1df200}
\bar{f}'(0)=\bar{f}''(0)=\cdots =\bar{f}^{(4m)}(0)=0,\ \ \ \bar{f}^{(4m+1)}(0)>0.
\end{equation}
Then the problem \eqref{1df} has a unique smooth accelerating transonic solution $(\bar{\rho}(x_1), \bar{u}(x_1))\in C^{\infty}([L_0,L_1])$, which is subsonic in $[L_0,0)$, supersonic in $(0, L_1]$ with a sonic state at $x_1=0$. The velocity can be represented as $\bar{u}(x_1)=c_*+ x_1^{2m+1} y(x_1)$ with a positive
smooth function $y\in C^{\infty}([L_0,L_1])$ and
\be\no
&&\bar{u}'(0)=\bar{u}''(0)=\cdots= \bar{u}^{(2m)}(0)=0,\\\no
&&\bar{u}^{(2m+1)}(0)=(2m+1)! y(0)=(2m+1)!\sqrt{\frac{2\bar{f}^{(4m+1)} (0)}{(\gamma+1)(4m+2)!}}>0.
\ee
\end{proposition}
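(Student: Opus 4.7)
The plan is to mirror the proof of Proposition \ref{1dfpo} but with the substitution rescaled to absorb the higher-order degeneracy dictated by \eqref{1df200}. Starting from the equivalent formulation $F(x_1,t;J)=0$ with $\bar{\rho}=J/\bar{u}$, the key observation is that $F$ separates as a function of $t$ plus a function of $x_1$, so every mixed partial derivative $\partial_t^i\partial_{x_1}^j F$ with $i\ge 1$ and $j\ge 1$ vanishes identically. Combined with $\partial_t F(0,c_*;J)=0$, $\partial_t^2 F(0,c_*;J)=\ga+1$ and $\partial_{x_1}^k F(0,c_*;J)=-\bar{f}^{(k-1)}(0)$, the assumption \eqref{1df200} forces the Taylor expansion at $(0,c_*)$ to take the clean form
\begin{equation*}
F(x_1,t;J)=\f{\ga+1}{2}(t-c_*)^2-\f{\bar{f}^{(4m+1)}(0)}{(4m+2)!}\,x_1^{4m+2}+R_t(t-c_*)+R_x(x_1),
\end{equation*}
where $R_t$ collects the pure $t$-powers of order $\ge 3$ in $(t-c_*)$ and $R_x$ collects the pure $x_1$-powers of order $\ge 4m+3$.

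Next, I would introduce the rescaled unknown $t=c_*+x_1^{2m+1}y$, chosen precisely so that $(t-c_*)^2$ and $x_1^{4m+2}$ become the same power of $x_1$. Plugging into the Taylor expansion above and dividing by $x_1^{4m+2}$ yields, for $x_1$ in a small punctured neighborhood of $0$, the equation
\begin{equation*}
y^2-\nu^2+\f{2}{\ga+1}G_1(x_1,y)=0,\qquad \nu\co\sqrt{\f{2\bar{f}^{(4m+1)}(0)}{(\ga+1)(4m+2)!}},
\end{equation*}
where $G_1$ is smooth in both variables because $R_t$ contributes terms $x_1^{(2m+1)(i-2)}y^i$ with $i\ge 3$ (hence exponent $\ge 2m+1$), and $R_x$ contributes $x_1^{j-(4m+2)}$ with $j\ge 4m+3$ (hence exponent $\ge 1$). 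In particular, $G_1(0,y)\equiv 0$ and $\partial_y G_1(0,y)\equiv 0$.

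Setting $H(x_1,y)\co y-\sqrt{\nu^2-\f{2}{\ga+1}G_1(x_1,y)}$, these vanishing properties give $H(0,\nu)=0$ and $\partial_y H(0,\nu)=1\ne 0$. The implicit function theorem then supplies a unique smooth $y(x_1)$ on some interval $[-\si_2,\si_2]$ with $y(0)=\nu>0$. The sign of $\nu$ guarantees that $\bar{u}_1(x_1)\co c_*+x_1^{2m+1}y(x_1)$ lies below $c_*$ on $[-\si_2,0)$ and above $c_*$ on $(0,\si_2]$, so it selects the subsonic branch on the left and the supersonic branch on the right. Invoking the uniqueness of the transonic solution from Proposition \ref{1dfge}, one identifies $(\bar{u},J/\bar{u})$ with $(\bar{u}_1,J/\bar{u}_1)$ on $[-\si_2,\si_2]$, which extends the smoothness across the sonic point. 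The asserted derivatives $\bar{u}^{(k)}(0)$ for $k\le 2m+1$ then follow by differentiating $x_1^{2m+1}y(x_1)$ at $0$.

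The main obstacle I expect is not the implicit function step but the verification that the remainder $G_1$ really is smooth up to $x_1=0$ with $G_1(0,\cdot)=\partial_y G_1(0,\cdot)=0$; this rests entirely on the matching between the vanishing order $4m$ of $\bar{f}'$ and the chosen exponent $2m+1$ in the substitution, and it is the reason why condition \eqref{1df200} involves specifically the order $4m+1$. If any mixed Taylor coefficient of $F$ failed to vanish, a fractional or logarithmic term would appear and the substitution $t-c_*=x_1^{2m+1}y$ would not leave a smooth implicit equation; the separable structure of $F$ is what makes the argument go through cleanly.
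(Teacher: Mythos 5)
Your proposal is correct and is essentially the argument the paper intends: the paper omits the proof of Proposition \ref{1dfzero1} precisely because it is the same reduction-of-degeneracy scheme as Proposition \ref{1dfpo}, and you reproduce it with the right rescaling $t=c_*+x_1^{2m+1}y$ so that the quadratic term in $t-c_*$ and the leading term $x_1^{4m+2}$ balance, followed by the same implicit-function-theorem step at $(0,\nu)$ with $\partial_y H(0,\nu)=1$. Your explicit use of the separability of $F$ (so that all mixed Taylor coefficients vanish and the remainder $G_1$ is genuinely smooth with $G_1(0,\cdot)=\partial_y G_1(0,\cdot)=0$) is exactly the structural fact that makes the higher-order case go through, and the identification with the subsonic/supersonic branches via uniqueness from Proposition \ref{1dfge} matches the paper's argument.
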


For the case $\bar{f}'(0)=0$, there is another possibility that $\bar{u}$ is only one order differentiable at $x_1=0$, thus $\bar{u}''(x_1)$ is discontinuous at $x_1=0$. Yet, the following existence theorem holds.

\begin{proposition}\label{1dfzero2}({\bf Smooth transonic flows with zero acceleration at the sonic point: case 2.}) Assume the initial data $(u_0, \rho_0)$ is subsonic, the external forcing satisfies \eqref{1df04}, \eqref{1df90} and for any nonnegative integer $m\geq 1\in \mathbb{N}$ the following holds
\begin{equation*}
\bar{f}'(0)=\bar{f}''(0)=\cdots =\bar{f}^{(4m-2)}(0)=0,\ \ \ \bar{f}^{(4m-1)}(0)>0.
\end{equation*}
Then there exists a unique $C^{2m-1,1}$ smooth accelerating transonic solution $(\bar{\rho}(x_1), \bar{u}(x_1))\in C^{2m-1,1}([L_0,L_1])$ to \eqref{1df} such that the
solution is subsonic in $[L_0,0)$, supersonic in $(0, L_1]$ with a sonic state at $x_1=0$. The velocity can be represented as $\bar{u}(x_1)=c_*+ x_1^{2m} y(x_1)$,
where the function $y$ is defined on $[L_0,L_1]$ with a discontinuity at $x_1=0$:
\begin{equation*}
y(x_1)=\begin{cases}
y_-(x_1)<0,\ \ x_1\in [L_0, 0),\\
y_+(x_1)>0,\ \ x_1\in (0, L_1],
\end{cases}
\end{equation*}
with $y_1\in C^{\infty}([L_0,0])$ and $y_2\in C^{\infty}([0,L_1])$.

Furthermore,
\be\no
&&\bar{u}'(0)=\bar{u}''(0)=\cdots= \bar{u}^{(2m-1)}(0)=0,\\\no
&&\bar{u}^{(2m)}(0-)= (2m)! y_-(0)=-(2m)!\sqrt{\frac{2}{(\gamma+1)}\frac{1}{(4m)!} \bar{f}^{(4m-1)}(0)}<0 ,\\\no
&&\bar{u}^{(2m)}(0+)= (2m)! y_+(0)=(2m)!\sqrt{\frac{2}{(\gamma+1)}\frac{1}{(4m)!} \bar{f}^{(4m-1)}(0)}>0.
\ee
\end{proposition}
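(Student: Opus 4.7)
The plan is to repeat the reduction-of-degeneracy step of Proposition \ref{1dfpo}, but with the new rescaling $t = c_* + x_1^{2m} y$ chosen so that the leading $(t-c_*)^2$ term in the Taylor expansion of $F$ balances the $x_1^{4m}$ leading order of $\int_0^{x_1}\bar{f}(\tau)\,d\tau$ that arises from $\bar{f}^{(4m-1)}(0) > 0$. The implicit function theorem then has to be applied \emph{separately} on $[-\sigma, 0]$ and on $[0, \sigma]$, and will produce two smooth branches of $y$ that agree to order $2m-1$ at the origin but disagree in sign at order $2m$.

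Because $F(x_1, t; J) = F_1(t) - \int_0^{x_1}\bar{f}(\tau)\,d\tau - B_0$ with $F_1(t) := \tfrac{1}{2}t^2 + \tfrac{\gamma J^{\gamma-1}}{\gamma-1}t^{-(\gamma-1)}$ separates additively in $(x_1, t)$, expanding $F_1$ about $c_*$ and using $B_0 = F_1(c_*)$ gives
\begin{equation*}
F(x_1, t; J) = \tfrac{1}{2}(\gamma+1)(t-c_*)^2 + \tilde{G}(t-c_*) - \int_0^{x_1}\bar{f}(\tau)\,d\tau,
\end{equation*}
with $\tilde{G}$ analytic and $\tilde{G}(s) = O(s^3)$. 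The hypothesis on the first $4m-1$ derivatives of $\bar{f}$ makes $A(x_1) := x_1^{-4m}\int_0^{x_1}\bar{f}(\tau)\,d\tau$ smooth on $[L_0, L_1]$ with $A(0) = \bar{f}^{(4m-1)}(0)/(4m)! > 0$, so substituting $t = c_* + x_1^{2m} y$ and dividing by $x_1^{4m}$ rewrites $F(x_1, t; J) = 0$ as
\begin{equation*}
y^2 = \tfrac{2}{\gamma+1}\bigl[A(x_1) - B(x_1, y)\bigr], \qquad B(x_1, y) := x_1^{-4m}\tilde{G}(x_1^{2m} y),
\end{equation*}
where $B$ is smooth near $(0, 0)$ with $B(0, \cdot) \equiv 0$ (its power series is $\sum_{k \ge 3} a_k x_1^{2m(k-2)} y^k$).

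With $\nu := \sqrt{2\bar{f}^{(4m-1)}(0)/((\gamma+1)(4m)!)}$ I would then set
\begin{equation*}
H_\pm(x_1, y) := y \mp \sqrt{\tfrac{2}{\gamma+1}\bigl[A(x_1) - B(x_1, y)\bigr]},
\end{equation*}
check $H_\pm(0, \pm\nu) = 0$ and $\partial_y H_\pm(0, \pm\nu) = 1$, and invoke the implicit function theorem to obtain $y_+ \in C^\infty([0, \sigma])$ with $y_+(0) = +\nu$ and $y_- \in C^\infty([-\sigma, 0])$ with $y_-(0) = -\nu$. The sign choices force $\bar{u}_+ > c_*$ on $(0, \sigma]$ and $\bar{u}_- < c_*$ on $[-\sigma, 0)$, so these local branches coincide with the supersonic and subsonic branches of Proposition \ref{1dfge} (which are already smooth away from $x_1 = 0$ by applying the implicit function theorem to $F$ itself), producing a global $\bar{u}$ on $[L_0, L_1]$ of the stated form $c_* + x_1^{2m} y(x_1)$ with the claimed discontinuity of $y$ at the origin.

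Reading off one-sided derivatives of $\bar{u}_\pm(x_1) = c_* + x_1^{2m} y_\pm(x_1)$ via Leibniz gives $\bar{u}^{(j)}(0\pm) = 0$ for $j < 2m$ and $\bar{u}^{(2m)}(0\pm) = \pm (2m)!\,\nu$, so $\bar{u}^{(2m-1)}$ is continuous and Lipschitz while $\bar{u}^{(2m)}$ jumps by $2(2m)!\,\nu$, yielding $\bar{u} \in C^{2m-1,1}([L_0, L_1])$ and no higher regularity; uniqueness on each half-interval is inherited from Proposition \ref{1dfge}. The main technical point to handle carefully is the smoothness of $B(x_1, y)$ at $x_1 = 0$, which rests on the analyticity of $F_1$ at $c_*$ so that $\tilde{G}(s) = s^3 R(s)$ with $R$ smooth, after which $B(x_1, y) = x_1^{2m} y^3 R(x_1^{2m} y)$ is manifestly smooth; the remaining arguments are a direct extension of the Proposition \ref{1dfpo} calculation and are expected to go through cleanly.
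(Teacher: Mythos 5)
Your argument is correct and is exactly the reduction-of-degeneracy-plus-implicit-function-theorem scheme the paper uses for Proposition \ref{1dfpo} (the paper omits the proof of this case precisely because it is this routine adaptation: rescale by $x_1^{2m}$, get a smooth two-variable equation for $y$, and apply the implicit function theorem separately to the two branches $y=\pm\nu$). One cosmetic point: $B_0=F_1(c_*)$ is not the definition of $B_0$ but a restatement of the normalization \eqref{1df90}, i.e. $\int_{L_0}^{0}\bar f\,d\tau=F_1(c_*)-B_0$, which is what legitimately converts the integral $\int_{L_0}^{x_1}$ in the definition of $F$ into the $\int_{0}^{x_1}$ appearing in your expansion.
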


Finally, if $\bar{f}(x_1)$ or higher even order derivatives of ${\bar f}(x_1)$ is not continuous at $x_1=0$, one has

\begin{proposition}\label{1dfzero3} Assume that $(u_0, \rho_0)$ is subsonic, the external force satisfies \eqref{1df04}, \eqref{1df90} and
\begin{equation*}\begin{cases}
\displaystyle\bar{f}'(0)=\bar{f}''(0)=\cdots =\bar{f}^{(2m-1)}(0)=0,\ \ \ \bar{f}^{(2m)}(0-)=\lim_{x_1\to 0+}\bar{f}^{(2m)}(x_1)<0,\\
\displaystyle\bar{f}^{(2m)}(0+)=\lim_{x_1\to 0+}\bar{f}^{(2m)}(x_1)>0,\ \ \ \text{for some integer $m\geq 0$}.
\end{cases}\end{equation*}
Then there exists a unique $C^{m,\frac12}$ smooth accelerating transonic solution $(\bar{\rho}(x_1), \bar{u}(x_1))\in C^{m,\frac12}([L_0,L_1])$ to \eqref{1df} such that the
solution is subsonic in $[L_0,0)$, supersonic in $(0, L_1]$ with a sonic state at $x_1=0$. The velocity can be represented as $\bar{u}(x_1)=c_*+ |x_1|^{m+\frac12} y(x_1)$,
where the function $y$ is defined on $[L_0,L_1]$ with a discontinuity at $x_1=0$:
\begin{equation*}
y(x_1)=\begin{cases}
y_-(x_1)<0,\ \ x_1\in [L_0, 0),\\
y_+(x_1)>0,\ \ x_1\in (0, L_1],
\end{cases}\end{equation*}
with $y_-\in C^{\infty}([L_0,0])$ and $y_+\in C^{\infty}([0,L_1])$.
\end{proposition}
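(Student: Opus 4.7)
The plan is to mirror the proof of Proposition~\ref{1dfpo}, but to work separately on the subsonic side $[L_0,0]$ and the supersonic side $[0,L_1]$, because the jump in $\bar f^{(2m)}$ at $x_1=0$ prevents a two-sided smooth Taylor expansion. As in the earlier propositions, smooth solutions of~\eqref{1df} are equivalent to $F(x_1,\bar u(x_1);J)=0$ with the same $F$ as in the proof of Proposition~\ref{1dfge}, and the unique continuous subsonic-to-supersonic transonic branch on $[L_0,L_1]$ is already furnished by Proposition~\ref{1dfge}. What remains is to identify its asymptotic behavior and regularity at the sonic point $x_1=0$.

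Since $\bar f$ is smooth on each of $[L_0,0]$ and $[0,L_1]$ and $\bar f^{(j)}(0\pm)=0$ for $j=0,\dots,2m-1$, Taylor expansion on each side followed by integration yields
\begin{equation*}
\int_0^{x_1}\bar f(\tau)\,d\tau=|x_1|^{2m+1}\,g_\pm(x_1),
\end{equation*}
where $g_+\in C^\infty([0,L_1])$ and $g_-\in C^\infty([L_0,0])$, with $g_\pm(0)=|\bar f^{(2m)}(0\pm)|/(2m+1)!>0$; the factor $|x_1|^{2m+1}$ absorbs the sign so that the integral is positive on both sides, consistent with \eqref{1df04}. Recalling from the proof of Proposition~\ref{1dfpo} that $\partial_t F(0,c_*;J)=0$ and $\partial_t^2 F(0,c_*;J)=\gamma+1$, I plug the one-sided ansatz $\bar u(x_1)=c_*+|x_1|^{m+1/2}y$ into $F=0$ and divide through by $|x_1|^{2m+1}$, reducing the problem to
\begin{equation*}
H_\pm(x_1,y)\co \tfrac{\gamma+1}{2}y^2-g_\pm(x_1)+|x_1|^{m+1/2}\,y^3\,h\bigl(|x_1|^{m+1/2}y\bigr)=0,
\end{equation*}
where $h$ is a smooth function determined by the Taylor remainder of $F(x_1,\cdot\,;J)$ at $t=c_*$. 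Evaluating at $x_1=0$ forces $y_\pm(0)=\pm\sqrt{2g_\pm(0)/(\gamma+1)}$, with the minus sign selected on the subsonic side (so $\bar u<c_*$ for $x_1<0$) and the plus sign on the supersonic side. Since $\partial_y H_\pm(0,y_\pm(0))=(\gamma+1)y_\pm(0)\ne 0$, a one-sided implicit function theorem yields a unique $y_\pm$ near $0$, which then extends to all of $[L_0,0]$ or $[0,L_1]$ by matching with the subsonic/supersonic branches of Proposition~\ref{1dfge} (on which $\partial_t F\ne 0$).

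The main obstacle is the regularity of $y_\pm$ at the endpoint $x_1=0$: because $|x_1|^{m+1/2}$ is only H\"older continuous, a direct application of the IFT in $C^\infty(x_1)$ fails. I would handle this by introducing the auxiliary variable $s=\sqrt{|x_1|}$ on each side; under $x_1=\pm s^2$ the factor $|x_1|^{m+1/2}=s^{2m+1}$ and the function $g_\pm(\pm s^2)$ are smooth in $s$, so $H_\pm(\pm s^2,y)=0$ is a smooth equation in $(s,y)$ to which the standard IFT applies, yielding $y_\pm$ as smooth functions of $s$. Combined with the explicit prefactor $|x_1|^{m+1/2}$, this produces the claimed regularity $\bar u,\bar\rho\in C^{m,1/2}([L_0,L_1])$. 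Uniqueness of the overall transonic flow follows from the uniqueness of the subsonic/supersonic branches in Proposition~\ref{1dfge} together with the IFT uniqueness at $x_1=0$.
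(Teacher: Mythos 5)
Your strategy is the natural one-sided adaptation of the proof of Proposition~\ref{1dfpo} (the paper omits the proof of this proposition entirely, but proves the neighbouring ones by exactly this reduction-of-degeneracy plus implicit-function-theorem scheme), and the key steps are sound: the one-sided Taylor expansions give $\int_0^{x_1}\bar f = |x_1|^{2m+1}g_\pm(x_1)$ with $g_\pm(0)=|\bar f^{(2m)}(0\pm)|/(2m+1)!>0$, the rescaling $t=c_*+|x_1|^{m+1/2}y$ reduces $F=0$ to your $H_\pm=0$, the nondegeneracy $\partial_yH_\pm(0,y_\pm(0))=(\gamma+1)y_\pm(0)\neq0$ holds, and the substitution $s=\sqrt{|x_1|}$ is the right device to make the implicit function theorem applicable. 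Uniqueness and the signs of $y_\pm$ do follow from Proposition~\ref{1dfge} as you say, and $\bar u-c_*=s^{2m+1}Y_\pm(s)$ with $Y_\pm$ smooth in $s$ does yield $\bar u,\bar\rho\in C^{m,1/2}$.

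The one point you should not gloss over is the final regularity claim for $y_\pm$ \emph{as functions of $x_1$}. Your argument delivers $y_\pm(x_1)=Y_\pm(\sqrt{|x_1|})$ with $Y_\pm$ smooth in $s=\sqrt{|x_1|}$, which is strictly weaker than $y_\pm\in C^\infty$ in $x_1$ up to $x_1=0$: since $\tilde H_+(s,y)=\frac{\gamma+1}{2}y^2-g_+(s^2)+s^{2m+1}y^3h(s^{2m+1}y)$ is not even in $s$, $Y_+$ need not be even, and indeed for $m=0$ one computes $Y_+'(0)=-y_+(0)^2h(0)/(\gamma+1)$ with $h(0)=\frac16\partial_t^3F(0,c_*;J)\neq0$, so $y_+(x_1)=y_+(0)+Y_+'(0)\sqrt{x_1}+O(x_1)$ is generically only $C^{0,1/2}$ at $x_1=0$. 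So either you must state your conclusion as ``$y_\pm$ smooth in $\sqrt{|x_1|}$'' (which still gives everything needed for $\bar u\in C^{m,1/2}$ and for the stated signs and limits $y_\pm(0)=\pm\sqrt{2|\bar f^{(2m)}(0\pm)|/((\gamma+1)(2m+1)!)}$), or you must supply an additional argument for the literal $C^\infty([0,L_1])$ claim — which, by the computation above, does not appear to be available; the statement of the proposition seems to be imprecise on this point. Apart from this caveat, the proof is correct.
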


\subsection{Smooth axisymmetric transonic irrotational flows}\noindent

From now on, the one dimensional smooth transonic flows with positive acceleration given in Proposition \ref{1dfpo} will be called as the background transonic flow. We will focus on the structural stability of the background transonic flows in a cylinder $\Om = \{x =  (x_1, x_2, x_2); L_0 < x_1 < L_1, \, x_2^2 + x_3^2 < 1 \}$ under axisymmetric perturbations of suitable boundary conditions at the entrance and exit of the cylinder.

Introduce the cylindrical coordinates $(x_1, r, \theta)$
\begin{equation*}
x_1 = x_1, \quad r = \sqrt{x_2^2 + x_3^2}, \quad \theta = \arctan \f{x_3}{x_2},
\end{equation*}
and decompose the velocity as ${\bf u}= u_1 {\bf e}_1 + u_r {\bf e}_r + u_{\th} {\bf e}_{\th}$, where
\be\no
{\bf e}_1=(0,0,1)^t,\ \ {\bf e}_r=(\cos\theta,\sin\theta,0)^t, \  \ {\bf e}_{\theta}=(-\sin\theta,\cos\theta,0)^t.
\ee


Consider the axi-symmetric flow, that is
\begin{equation*}
u_{1} = u_{1} ({x_1},r), \q u_r = u_r ({x_1},r), \q u_{\th} =u_{\th} (x_1, r), \q \rho = \rho (x_1,r),\q \Phi=\Phi(x_1,r).
\end{equation*}
Then the steady Euler system \eqref{2deuler-force} can be simplified as
\begin{equation}\label{cy1}
\begin{cases}
  \p_{x_1} (\rho u_{1}) + \p_r (\rho u_r) + \f {\rho u_r}r   = 0, & \\
  \rho ( u_{1} \p_{x_1} + u_r \p_r  ) u_{1}+ \p_{x_1} P(\rho ) = \rho \p_{x_1} \Phi, & \\
  \rho  (u_{1} \p_{x_1} + u_r \p_r ) u_r  + \p_r P(\rho ) = \rho \p_r \Phi , & \\
  \rho ( u_1 \p_{x_1}  + u_r \p_r ) u_{\th } + \f { \rho u_r u_{\th}}r = 0. &
\end{cases}
\end{equation}
The cylinder $\Omega$ is reduced to $D \co \{ (x_1, r), L_0 < x_1 < L_1, 0 < r < 1 \}$.

To avoid a lengthy paper, we concentrate only on the irrotational flows (i.e. $u_{\theta}=\p_{x_1} u_r - \p_r u_1 \equiv 0$ in $D$). The existence and uniqueness of smooth transonic flows with nonzero swirl velocity and vorticity to \eqref{cy1} will be reported in a forthcoming paper. Within the irrotational flows, the axisymmetric Euler equations \eqref{cy1} can be simplified as
\begin{equation}\label{cy2}
\begin{cases}
  \p_{x_1} (\rho u_{1}) + \p_r (\rho u_r) + \f {\rho u_r}r   = 0, & \\
  \p_{x_1} u_r - \p_r u_1 = 0, & \\
  B_0 = \f 12 |\u|^2 + h (\rho) - \bar{\Phi},
\end{cases}
\end{equation}
where we assume $\Phi=\bar{\Phi} (x_1)= \int_{L_0}^{x_1} \bar{f} (\tau) d \tau $ for simplicity. Introduce a potential function $\phi$ such that
\begin{equation}\no
u_1 = \p_{x_1} \phi, \ \ \  u_r = \p_r \phi, \ \ \ \phi(L_0,0)=0.
\end{equation}
The potential function $\bar{\phi}(x_1)$ for the background transonic flows is $ \bar{\phi}(x_1)=\int_{L_0}^{x_1} \bar{u}(s) ds$. The last equation in \eqref{cy2} implies that
\begin{equation}\no
\rho = \rho (|\nabla \phi|^2 , \bar{\Phi}) = \b[ \f{\ga -1}{\ga } (B_0 + \bar{\Phi} - \f 12 |\nabla \phi|^2) \b]^{\f 1{\ga -1}}, \ \ \nabla\phi=(\p_{x_1}\phi, \p_r\phi)
\end{equation}
and
\begin{equation*}
 c^2 (\rho) =  \ga \rho^{\ga -1} = (\ga -1 ) (B_0 + \bar{\Phi} - \f 12 |\nabla \phi|^2).
\end{equation*}
Thus the steady Euler system (\ref{cy2}) is equivalent to
\begin{equation}\label{cy3}
(c^2 (\rho) - (\p_{x_1} \phi)^2) \p_{x_1}^2 \phi +  (c^2 (\rho) - (\p_{r} \phi)^2) \p_r^2 \phi - 2\p_{x_1} \phi \p_r\phi \p_{x_1r}^2 \phi
+\f {c^2(\rho)}{r} \p_r\phi = - {\bar f}(x_1)\p_{x_1} \phi .
\end{equation}

We prescribe the following boundary conditions
\begin{equation}\label{2dbcs1}
\begin{cases}
  \p_r \phi (L_0, r) = \eps h_1 (r), &  \forall r \in [0, 1], \\
  \p_r \phi (x_1, 1) = \p_r \phi (x_1, 0) = 0, & \forall x_1 \in [L_0, L_1],
\end{cases}
\end{equation}
Here $ \f {h_1(r)}r \in H^2_r ([0,1])$, $h_1' (r) \in H^2_r ([0, 1])$ satisfy the following compatibility conditions
\begin{equation}\label{cp2}
h_1 (0) =  h_1'' (0) = 0,
\end{equation}
and for some positive constant $\beta_0\in (0,1)$,
\begin{equation}\label{bch}
h_1 (r) \equiv 0, \ \ \forall r \in [1 - \beta_0, 1].
\end{equation}

Note here that we pose some restrictions on the flow angle at the inlet (i.e. the first equation in \eqref{2dbcs1}), which are physically acceptable and experimentally controllable. The second condition in \eqref{2dbcs1} is the slip boundary condition on the wall. From a mathematical perspective, these two boundary conditions are also admissible for the linearized potential equation (see Lemma \ref{H1estimate}). No boundary conditions need to be specified at the exit of the cylinder.

Before stating the main theorem, we introduce the following weighted norms on $D$:
\be\no
\| \psi \|_{L^2_r (D)}^2 & \co & \int_{L_0}^{L_1} \int_0^1 |\psi (x_1, r)|^2 r dr dx_1, \\\no
\| \psi \|_{H^1_r (D)}^2 & \co & \| \psi \|_{L^2_r (D)}^2 + \|\na \psi \|_{L^2_r (D)}^2 , \ \ \ \nabla=(\p_{x_1}, \p_r), \\\label{norm}
\| \psi \|^2_{H^2_r (D)} & \co & \| \psi \|_{H^1_r (D)}^2 + \|\na^2 \psi \|_{L^2_r (D)}^2 + \b\|\f 1r \p_r \psi \b\|_{L^2_r (D)}^2, \\\no
\| \psi \|^2_{H^3_r (D)} & \co & \| \psi \|^2_{H^2_r (D)} + \|\na^3 \psi \|_{L^2_r (D)}^2 + \b\|\na \left(\f 1r \p_r \psi \right) \b\|_{L^2_r (D)}^2, \\\no
\| \psi \|^2_{H^4_r (D)} & \co & \| \psi \|^2_{H^3_r (D)} + \|\na^4 \psi \|_{L^2_r (D)}^2 + \b\|\na^2 \left(\f 1r \p_r \psi\right) \b\|_{L^2_r (D)}^2 + \b\| \f 1r \p_r \left(\f 1r \p_r \psi \right) \b\|_{L^2_r (D)}^2.
\ee
Note that for any $\psi\in H^4_r(D)$, one has $\p_r\left(\f 1r \p_r \psi \right)\in L_r^2(D)$. However, $\frac{1}{r}\p_r^2 \psi\not\in L_r^2(D)$ and $\p_r\psi\not\in H_r^3(D)$ since in general $\p_r^2\psi(x_1,0)\neq 0$ for any $x_1\in [L_0,L_1]$.

The following theorem establishes the structural stability of the background transonic flows under the axisymmetric perturbations as above.
\begin{theorem}\label{irro}
  Let $(\bar{u}, \bar{\rho })$ be the background transonic flow with positive acceleration given in Proposition \ref{1dfpo}. Assuming that $\ga >1$, $ \f {h_1 (r)}r \in H^2_r ([0, 1])$, $h_1' \in H^2_r ([0, 1])$ satisfy \eqref{cp2} and \eqref{bch}, there exist positive constants $C_0,\epsilon_0$ depending on the background flow and the boundary datum $h_1$, such that for any $0 < \epsilon < \epsilon_0$, the problem \eqref{cy3} with \eqref{2dbcs1} has a unique smooth axisymmetric transonic irrotational solution $\phi\in H^4_r(D)$ with
  \begin{equation}\label{ir1}
  \| \phi - \bar{\phi} \|_{H^4_r (D)}  \le C_0 \epsilon.
  \end{equation}
  Moreover, all the sonic points form a $C^1$ smooth axisymmetric front $x_1 = \xi (r) \in C^1 ([0, 1])$, which is closed to the background sonic disc $x_1=0$ in the sense that
  \begin{equation}\label{so1}
  \| \xi(r) \|_{C^1 ([0, 1])} \le C_0 \eps.
  \end{equation}
\end{theorem}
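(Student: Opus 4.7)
My plan is to write $\phi = \bar{\phi} + \psi$ and reduce \eqref{cy3} to a quasilinear perturbation equation for $\psi$. Using \eqref{1df} and dividing through by $c^2(\bar{\rho})$, the linearization of \eqref{cy3} at the background flow takes the schematic form
\begin{equation*}
L\psi := \bar{k}_{11}(x_1)\p_{x_1}^2\psi + \p_r^2\psi + \f{1}{r}\p_r\psi + \bar{k}_1(x_1)\p_{x_1}\psi = N(\psi,\na\psi,\na^2\psi),
\end{equation*}
where $\bar{k}_{11}$ and $\bar{k}_1$ are the coefficients in \eqref{5} and $N$ is a quadratic nonlinearity in the perturbation. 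Since $\bar{k}_{11}$ changes sign at $x_1=0$ (elliptic for $x_1<0$, hyperbolic for $x_1>0$) with $\bar{k}_{11}(0)=0$, this is a Keldysh-type mixed equation, and $\f{1}{r}\p_r\psi$ is the axisymmetric singular term. I would close the scheme by a contraction mapping in a small ball of $H^4_r(D)$: given $\tilde{\psi}$, define $T\tilde{\psi}=\psi$ as the solution of $L\psi = N(\tilde{\psi},\na\tilde{\psi},\na^2\tilde{\psi})$ with the boundary data from \eqref{2dbcs1}, and prove the uniform bound $\|\psi\|_{H^4_r(D)} \le C(\eps + \|\tilde{\psi}\|_{H^4_r(D)}^2)$ together with a contraction estimate in a weaker norm.

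The linear existence together with the a priori estimate is the heart of the proof. For the basic $H^1_r$ estimate I would use the multiplier $d(x_1)\p_{x_1}\psi$ with $d(x_1)=6(x_1-d_0)$ from Lemma \ref{bkg-coe}. After integration by parts over $D$ and exploiting the boundary conditions from \eqref{2dbcs1}, the principal bulk term reduces to a quadratic form in $(\p_{x_1}\psi,\p_r\psi)$ whose coefficients equal $(\bar{k}_1+j\bar{k}_{11}')d - \f{1}{2}(\bar{k}_{11}d)'$ with $j=0$; by \eqref{7} it is coercive, yielding control even across the sonic point and with no boundary datum at the exit $x_1=L_1$. The factor $r$ in the measure interacts nicely with the singular term $\f{1}{r}\p_r\psi$, since $\int \f{1}{r}\p_r\psi \cdot d\p_{x_1}\psi \, r\,dr\,dx_1 = \int \p_r\psi \cdot d\p_{x_1}\psi \, dr\,dx_1$, and the latter can be integrated by parts in $x_1$ without any $r^{-1}$ blowup.

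For the higher derivative estimates the new observations highlighted in the abstract are crucial, and this is the main obstacle. Differentiating the equation in $x_1$ shows that $\p_{x_1}\psi$ satisfies an equation of the same form but with $\bar{k}_1$ replaced by $\bar{k}_1+\bar{k}_{11}'$; iterating up to order three yields coefficients $\bar{k}_1+j\bar{k}_{11}'$ with $j=1,2,3$, which is precisely the range covered by \eqref{6}--\eqref{7}, so the same multiplier argument closes at every order in $x_1$. Radial derivatives are more delicate: commuting $\p_r$, and in particular the operator $\f{1}{r}\p_r$, through $\p_r^2+\f{1}{r}\p_r$ produces singular terms such as $\f{1}{r^2}\p_r\psi$ and $\f{1}{r}\p_r\big(\f{1}{r}\p_r\psi\big)$, which are controlled only in the weighted norms defined in \eqref{norm}. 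The compatibility conditions \eqref{cp2} and the cutoff \eqref{bch} of $h_1$ away from $r=1$ guarantee that the boundary traces produced by these integrations by parts at $r=0$ and $r=1$ vanish. The genuinely new algebraic point, absent from the two dimensional analysis of \cite{WX23}, is that the identity $\p_r\big(\f{1}{r}\p_r\big)=\f{1}{r}\p_r^2-\f{1}{r^2}\p_r$ and its iterates generate cancellations that allow the top weighted norm $\|\f{1}{r}\p_r(\f{1}{r}\p_r\psi)\|_{L^2_r(D)}$ entering the definition of $H^4_r(D)$ to be absorbed into the lower-order good terms coming from the same multiplier.

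Finally, once $\phi\in H^4_r(D)$ with $\|\phi-\bar{\phi}\|_{H^4_r(D)}\le C_0\eps$ is constructed, the weighted Sobolev embedding yields at least $C^2$ control on $\phi$. Since the background has positive acceleration $\bar{u}'(0)>0$, one has $\f{d}{dx_1}(c^2(\bar{\rho})-\bar{u}^2)|_{x_1=0}\ne 0$, and this transversality is preserved by the small $\eps$-perturbation. The implicit function theorem applied to the level set $\{c^2(\rho)-|\na\phi|^2=0\}$ then produces the $C^1$ sonic front $x_1=\xi(r)$ with $\|\xi\|_{C^1([0,1])}\le C_0\eps$, completing the proof.
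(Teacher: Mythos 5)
Your overall strategy (perturb the background, linearize, use the multiplier $d(x_1)\p_{x_1}\psi$ from Lemma \ref{bkg-coe}, commute $\p_{x_1}$ to pick up the coefficients $\bar k_1 + j\bar k_{11}'$, close by a fixed point in $H^4_r$) matches the paper's in outline, but two of your choices create genuine gaps. First, your iteration solves the \emph{background} operator $L$ with all perturbation terms, including $(k_{11}(\na\tilde\psi)-\bar k_{11})\p_{x_1}^2\tilde\psi$ and $2k_{12}(\na\tilde\psi)\p_{x_1 r}^2\tilde\psi$, dumped into $N$. This loses a derivative: in the transonic region the Keldysh-type estimate gains only one derivative (one needs the source in $H^3_r$ to get $\psi\in H^4_r$, cf. Lemma \ref{H4}), while $N$ contains $\na^2\tilde\psi\in H^2_r$ multiplied by $H^3_r$ coefficients and so lies only in $H^2_r$; hence $T$ does not map a ball of $H^4_r$ into itself. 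The paper avoids this by freezing the quasilinear coefficients at the previous iterate, i.e.\ solving \eqref{linearized1}, which keeps all second-order terms on the left and leaves a right-hand side $F_0(\na\hat\psi)$ depending only on \emph{first} derivatives of $\hat\psi$, hence in $H^3_r$. Second, you give only a priori estimates and no existence mechanism for the linear mixed-type problem. For an elliptic--hyperbolic equation of Keldysh type this is not a formality: the paper adds a third-order dissipation $\si\p_{x_1}^3\psi$ with the supplementary conditions $\p_{x_1}^2\psi(L_0,\cdot)=\p_{x_1}^2\psi(L_1,\cdot)=0$, builds approximate solutions by Galerkin with the eigenfunctions of $\p_r^2+\f1r\p_r$, proves estimates uniform in $\si$, and passes to the limit.

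Beyond these two points, the higher-order estimates cannot be run the way you describe. In the transonic region you cannot simply iterate the multiplier "at every order in $x_1$": no boundary condition is prescribed at $x_1=L_1$ and the equation is hyperbolic there, so the paper first extends the problem to a longer cylinder $D_2$ on which modified coefficients $\bar a_{11},\bar a_1$ make the equation elliptic near the new exit, and only then localizes with cutoffs. In the elliptic region, the top weighted norms $\|\na^2(\f1r\p_r\psi)\|_{L^2_r}$ and $\|\f1r\p_r(\f1r\p_r\psi)\|_{L^2_r}$ are not obtained from a cancellation in the energy identity; the paper recovers them from the equation itself, reading $\p_r^2\psi+\f1r\p_r\psi=H$ as a two-dimensional Poisson problem in the disc after reverting to Cartesian coordinates near the axis, and near the wall it uses Grisvard's mixed-boundary estimates together with the auxiliary function $\hat w_2=\p_{x_1}^2\psi-\f1{k_{11}}(F_0-k_1\p_{x_1}\psi)$, because the naive symmetric extension across $r=1$ fails ($\p_r^3\psi(x_1,1)\neq 0$ in general). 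Your sketch of the sonic front via the implicit function theorem is consistent with the paper.
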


\begin{remark}
{\it To the best of the authors' knowledge, Theorem \ref{irro} is the first existence and uniqueness result on the smooth axisymmetric transonic flows.
}
\end{remark}


There have been many studies on smooth subsonic-sonic and transonic flows. For flows past a profile, Gilbarg and Shiffman \cite{gs54} showed that for a smooth subsonic-sonic flow past a profile, the sonic points must occur on the profile. Morawetz in \cite{mora56} proved that smooth transonic flows with a supersonic bubble attached to the profile does not exist in general and is unstable with respect to small changes in the shape of the profile. Kuzmin \cite{Kuzmin2002} had investigated the perturbation problem of the von Karman equation which can be used to describe an accelerating transonic flow when the velocity is close to the sound speed and the flow angle is small. In a converging nozzle with straight solid walls, the existence and uniqueness of a two dimensional irrotational continuous subsonic-sonic flow with a singularity at the sonic point was proved in \cite{WX2013} and the acceleration blows up at the sonic curve.



In his studies of possible continuation of a flow across a sonic curve when a subsonic-sonic flow was assumed to be given, Bers \cite{bers58} found that sonic points should be classified into two classes: exceptional and nonexceptional. A sonic point in a $C^2$ transonic flow is exceptional if and only if the velocity is orthogonal to the sonic curve at this point. Bers further proved that if there are no exceptional points on a sonic curve, then the flow can be continued locally as a supersonic flow without discontinuity in a unique way across the sonic curve. Wang and Xin \cite{WX15,WX2016,WX2019,WX2020} established the existence and uniqueness of smooth transonic irrotational flows of Meyer type on two dimensional De Laval nozzles, and all the sonic points are exceptional and must locate at the throat. See some further developments in this direction in \cite{wang19,wang22}. Weng and Xin \cite{WX23} proved the existence and uniqueness of smooth transonic flows with nonzero vorticity in de Laval nozzles for a quasi two dimensional steady Euler flow model, where one of the key issues is the analysis of a linear second order elliptic-hyperbolic mixed equation of Keldysh type. One may see the related stability analysis for smooth transonic flows to the steady Euler-Poisson system \cite{bdx}.

Courant and Friedrichs \cite[Section 104]{Courant1948} had found a class of spiral flows which may change smoothly from subsonic to supersonic or vice verse. The authors in \cite{WXY21a} had further studied this class of radially symmetric transonic flows with nonzero angular velocity in an annulus and gave a complete classification of all possible flow patterns for inviscid transonic flow with or without shocks. Different from \cite{WX2019,WX2020,WX23}, the sonic points of the smooth transonic spiral flow constructed in \cite{WXY21a} are all nonexceptional and noncharacteristic degenerate. The existence and uniqueness of smooth transonic spiral flows with nonzero vorticity satisfying suitable boundary conditions were established in \cite{WXY21b} by the analysis of a linear second order elliptic-hyperbolic mixed equation of Tricomi type and the deformation-curl decomposition to the steady Euler equations \cite{WengXin19,weng2019}.

Now we explain some new ingredients in our proof of Theorem \ref{irro}. We basically follow the strategy developed in \cite{WX23}, yet there are several important differences. The linear second order elliptic-hyperbolic mixed differential equation in \cite{WX23} is as follows
\begin{equation}\label{linear-2d}
\begin{cases}
   k_{11} \p_{x_1}^2 \phi + 2k_{12}\p_{x_1x_2}^2 \phi +\p_{x_2}^2 \phi + k_1 \p_{x_1} \phi = G_0(x_1 , x_2), & (x_1 , x_2) \in (L_0, L_1) \times (-1, 1), \\
  \phi (L_0, x_2) = 0, & \forall x_2 \in [-1, 1], \\
  \p_{x_2} \phi (x_1, \pm 1) = 0, & \forall x_1 \in [L_0, L_1].
\end{cases}
\end{equation}
While the linear second order elliptic-hyperbolic mixed differential equation in current case reads as
\begin{equation}\label{linear-irro}
\begin{cases}
  k_{11}  \p_{x_1}^2 \psi + 2 k_{12} \p_{x_1 r}^2 \psi + \p_r^2 \psi + \f 1r \p_r \psi + k_1 \p_{x_1} \psi + k_2 \p_r \psi = F_0(x_1,r), &  (x_1, r) \in (L_0, L_1) \times (0, 1), \\
 \psi (L_0, r) =  0, & \forall r \in [0, 1],   \\
  \p_r \psi (x_1, 0) = \p_r \psi (x_1, 1) = 0,  & \forall x_1 \in [L_0, L_1].
\end{cases}
\end{equation}
Both the coefficients $k_{11}$ in \eqref{linear-2d} and \eqref{linear-irro} change signs when crossing the sonic front, thus both \eqref{linear-2d} and \eqref{linear-irro} are Keldysh type mixed equations. Since \eqref{linear-irro} contains a singular operator $\f 1r \p_r$, several new difficulties must be overcome:
\begin{enumerate}[(i)]
  \item To deal with the artificial singularities near the symmetry axis, we introduce the weighted norms $\|f\|_{H^m_r(D)} (m=0,\cdots,4)$ for a function $f(x_1,r)$ defined on $D$, which correspond to the standard Sobolev norms $\|\check{f}\|_{H^m(\Omega)}(m=0,\cdots,4)$ for the function $\check{f}(x_1,x')=f(x_1,|x'|)$ defined on $\Omega$. Due to the different roles of $x_1,r$ and the singular operator $\frac{1}{r}\p_r$ appeared in the norms $\|\cdot\|_{H^m_r(D)} (m=2,3,4)$, the coefficients $k_{12}$ and $k_2$ in \eqref{linear-irro} behave quite different from $k_{11}$ and $k_1$ (See Lemma \ref{coe-estimate}). The estimates in Lemma \ref{coe-estimate} are of great importance in the a priori estimates to \eqref{linear-irro}.

  \item In order to show the existence of the $H^2_r(D)$ strong solution of \eqref{linear-irro}, similar to \cite{WX23}, we modify the strategy in \cite{Kuzmin2002} and add a third order dissipation term $\si \p_{x_1}^3 \psi$ and two additional boundary conditions $\p_{x_1}^2 \psi (L_0, \cdot) = \p_{x_1}^2 \psi (L_1, \cdot ) = 0$, instead of the one $\p_{x_1} \psi (L_0, \cdot ) = \p_{x_1} \psi (L_1, \cdot) = 0$ used in \cite{Kuzmin2002}. The positive acceleration of the one dimensional transonic solutions plays an important role in searching for an appropriate multiplier for the equation \eqref{linear-irro}. To obtain the $H^2_r(D)$ estimate, it is also crucial to observe that multiplying  \eqref{linear-irro} by $-d (x_1) \p_{x_1} (\p_r^2 \psi + \f 1r \p_r \psi)$ with $d(x_1)= 6(x_1-d_0)$ and integrating by parts will yield the estimate $\|\p_r^2 \psi \|_{L^2_r (D)} + \| \f 1r \p_r \psi \|_{L^2_r (D)}$. These enable us to obtain a uniform $H^2_r(D)$ estimate with respect to $\sigma$. The Galerkin method with an orthonormal basis for $L^2_r(D)$ consisting of all the eigenfunctions of the operator $\p_r^2+\frac{1}{r}\p_r$ with Neumann boundary condition is used to construct the approximate solutions.

  \item For the $H^4_r$ estimate of $\psi$ on elliptic region, the symmetric extension technique used in \cite{WX23} can not be applied to the problem \eqref{linear-irro} due to the term $\frac{1}{r}\p_r \psi$ and the fact that $\p_r^3\psi(x_1,1)=0$ for any $x_1\in [L_0,L_1]$ does {\it not} hold in general. The $H^4_r$ estimates near the nozzle wall $r = 1$ and near the axis $r = 0$ will be discussed separately. Since $\p_r\psi$ satisfies homogeneous boundary conditions at the entrance and nozzle wall, one can use \cite[Theorem 3.1.3.1]{Grisvard11} to derive the $H^3_r$ estimate near the nozzle wall $r = 1$. For the $H^4_r$ estimate near $r = 1$, a key observation is the function $\hat{w}_2:= \p_{x_1}^2 \psi - \f 1{k_{11}} (F_0 - k_1 \p_{x_1} \psi)$ satisfies the homogeneous mixed boundary conditions on the entrance and nozzle wall (see \eqref{w21}), then the symmetric extension and the cut-off techniques can be applied to derive the estimate. To obtain the $H_r^4$ estimate near the axis, we transform the problem \eqref{linear-irro} back to the Cartesian coordinates and the singularities disappear. The standard elliptic theory in \cite{gt} yields the $H^4_r$ estimate near $r = 0$. The $H^4_r$ estimate of $\psi$ on the transonic region adopts the method similar to \cite{WX23} by extending the problem to a larger region so that the auxiliary equation is elliptic at the exit.
      Since the properties of the coefficients $k_{11}, k_{12}, k_1$ and $k_2$ in \eqref{linear-irro} are more complicated than those for \eqref{linear-2d}, the $H^4_r(D)$ norm estimate for $\psi$ in the transonic region is more involved than those in \cite{WX23}.
\end{enumerate}

The structure of the paper will be arranged as follows. In section \ref{H2estimate}, we use the multiplier method and the Galerkin's method to prove the existence and uniqueness of the $H^2_r$ strong solution to the linearized mixed potential equation. In section \ref{h4estimate}, we prove higher order energy estimates in the elliptic and the transonic domain, respectively. The proof of Theorem \ref{irro} by the fixed point theorem is also accomplished in this section. The appendix shows some estimates to the coefficients of the linearized mixed equation.

\section{The $H^2_r$ strong solution to the linear second order elliptic-hyperbolic mixed equation}\label{H2estimate} \noindent



In this section, we first linearized the problem \eqref{cy3} with \eqref{2dbcs1}. We rewrite \eqref{cy3} as
\begin{equation}\no
\f {c^2 (\rho) - (\p_{x_1} \phi)^2}{c^2 (\rho) - (\p_r \phi )^2 } \p_{x_1 }^2 \phi
 -  \f {2 \p_{x_1} \phi \p_r \phi}{c^2 (\rho) - (\p_r \phi )^2 } \p_{x_1 r}^2 \phi
 +  \p_r^2 \phi
 + \f {c^2 (\rho)  \p_r \phi }{r (c^2 (\rho) - (\p_r \phi )^2 )}+ \f {{\bar f}(x_1)\p_{x_1} \phi}{c^2 (\rho) - (\p_r \phi )^2}=0.
\end{equation}
Denote $\psi_1= \phi - \bar{\phi}$, then $\psi_1$ satisfies
\begin{equation}\label{2d-force-cy17}
\begin{cases}
  k_{11} (\na \psi_1 ) \p_{x_1}^2 \psi_1 + 2 k_{12} (\na \psi_1 ) \p_{x_1 r}^2 \psi_1 + \p_r^2 \psi_1 + \f 1r \p_r \psi_1 + k_1 (\na \psi_1 ) \p_{x_1} \psi_1 + k_2 (\na \psi_1 ) \p_r \psi_1 = F(\na \psi_1), \\
  \p_r \psi_1 (L_0, r) = \epsilon h_1 (r), \q \forall r \in [0, 1], \q  \psi_1 (L_0, 0) = 0, \\
  \p_r \psi_1 ({x_1},1) = \p_r \psi_1 ({x_1},0 ) = 0, \q \forall {x_1} \in [L_0, L_1],
\end{cases}
\end{equation}
where
\begin{equation}\label{coe}
\begin{cases}
  k_{11} (\na \psi_1 ) = \f {c^2 (\rho) - (\p_{x_1} \psi_1 + \bar{u})^2}{c^2 (\rho) - (\p_r \psi_1 )^2}, \,
  k_{12} (\na \psi_1 ) = - \f {(\p_{x_1} \psi_1 + \bar{u}) \p_r \psi_1 }{c^2 (\rho) - (\p_r \psi_1 )^2}, \\
  k_1 (\na \psi_1 ) = \f {\bar{f} - (\ga +1) \bar{u} \bar{u}'}{c^2 (\rho) - (\p_r \psi_1 )^2},  \,
  k_2 (\na \psi_1 ) = \f {(\p_r \psi_1)^2  }{r (c^2 (\rho) - (\p_r \psi_1 )^2 )}, \\
  F (\na \psi_1 ) =   \f {\bar{u}^{\prime}}{ c^2 (\rho) - (\p_r \psi_1 )^2 } [
\f {\ga +1}2 (\p_{x_1} \psi_1)^2 + \f{\ga -1}2 (\p_r \psi_1)^2
 ],\\
c^2 (\rho) = (\ga -1) [B_0 + \Phi - \f 12 ((\p_{x_1} \psi_1 + \bar{u})^2 + (\p_r \psi_1)^2 )].
\end{cases}
\end{equation}

Choose a monotonic decreasing function $\eta_0 (x_1) \in C^{\oo} ([L_0, L_1])$ satisfying
\begin{equation}\label{eta}
\eta_0 (x_1) = \begin{cases}
    1, & L_0 \le x_1 \le \f {15}{16} L_0 , \\
    0, & \f 78 L_0 \le x_1 \le L_1.
    \end{cases}
\end{equation}
Set $\psi(x_1,r) = \psi_1 (x_1 ,r) -  \epsilon \psi_0(x_1,r)$, where $\psi_0(x_1,r)=\eta_0 (x_1)
\int_0^r  h_1 (t) dt$. Then $\psi$ satisfies
\begin{equation}\label{linearized2}
\begin{cases}
  k_{11} (\na \psi + \eps \na \psi_0 ) \p_{x_1}^2 \psi + 2 k_{12} (\na \psi + \eps \na \psi_0 ) \p_{x_1 r}^2 \psi + \p_{r}^2 \psi + \f 1r \p_r \psi  & \\
  \q\q  + k_1 (\na \psi + \eps \na \psi_0 ) \p_{x_1} \psi+ k_2 (\na \psi + \eps \na \psi_0 ) \p_r \psi  = F_0 (\na \psi), & (x_1, r ) \in D, \\
  \psi (L_0, r) =0, & \forall r \in [0, 1], \\
  \p_r \psi (x_1, 1 ) = \p_r \psi (x_1, 0 ) = 0, & \forall x_1 \in [L_0, L_1],
\end{cases}
\end{equation}
where
\be\label{f0}
&&F_0 (\na \psi) \co  F (\na \psi + \eps \na \psi_0 )+ \mc{F}(\nabla \psi),\\\no
&&\mc{F}(\nabla \psi)\co - \epsilon(k_{11} (\na \psi + \eps \na \psi_0 ) \p_{x_1}^2 \psi_0
+ k_1 (\na \psi + \eps \na \psi_0 )\p_{x_1}\psi_0)
\\\no
&&\q\q\q \q- \eps  (2 k_{12} (\na \psi + \eps \na \psi_0 ) \p_{x_1r}^2 \psi_0 + k_2 (\na \psi + \eps \na \psi_0 ) \p_r \psi_0) - \epsilon(\p_r^2+\frac{1}{r}\p_r)\psi_0.
\ee

Denote the function space $\Sigma_{\de_0}$ by consisting of the functions $\psi \in H^4_r (D)$ satisfying $\| \psi \|_{H^4_r (D )} \le \de_0$ with $\de_0 > 0$ to be specified later and the compatibility conditions
\begin{equation}\no\begin{cases}
\psi (L_0, r) = 0,\ \ &\forall r\in [0,1],\\
\p_r \psi(x_1, 1) = \p_r\psi(x_1,0)=\p_r^3 \psi (x_1, 0) =0, \ \ &\forall x_1\in [L_0,L_1].
\end{cases}\end{equation}

For any given $\hat{\psi} \in \Sigma_{\de_0} $, we define an operator $\mc{T}$ mapping $ \hat{\psi } \in \Sigma_{\de_0}$ to $ \psi \in \Sigma_{\de_0} $, where $\psi$ is obtained by solving the following linear mixed type equation
\begin{equation}\label{linearized1}
\begin{cases}
  \mc{L} \psi \co k_{11} (\na \hat{\psi}+\epsilon \na \psi_0) \p_{x_1 }^2 \psi + 2 k_{12} (\na \hat{\psi}+\epsilon \na \psi_0) \p_{x_1 r}^2 \psi  + \p_{r}^2 \psi + \f 1r \p_r \psi \\
  \quad\quad\quad+ k_1 (\na \hat{\psi }+\epsilon \na \psi_0) \p_{x_1} \psi + k_2 (\na \hat{\psi }+\epsilon \na \psi_0) \p_r \psi = F_0 (\na \hat{\psi }), \\
  \p_r \psi (L_0, r) = \eps h_1 (r), \q \forall r\in [0, 1], \q  \psi (L_0, 0) =0, \\
  \p_r \psi ({x_1}, 1) = \p_r \psi ({x_1}, 0) = 0, \q \forall {x_1} \in [L_0, L_1].
\end{cases}
\end{equation}

We first present the following important properties for the coefficients in \eqref{linearized1}, which play a crucial role in the subsequent stability analysis.

\begin{lemma}\label{coe-estimate}
  Suppose that $\hat{\psi} \in \Sigma_{\de_0}$, for sufficiently small constants $\epsilon$ and $\delta_0$, there exists a constant $C_0$ depending only on the background flows and the functions $h_0$ such that $k_{1i} (\na \hat{\psi}+\epsilon \na \psi_0)$, $k_i (\na \hat{\psi}+\epsilon \na \psi_0)$ for $i= 1,2 $ and $F (\na \hat{\psi}+\epsilon \na \psi_0)$, $\mc{F}(\na \hat{\psi})$ satisfy the following estimates
  \be\label{coe11}
  &&
  \| k_{11}   - \bar{k}_{11}  \|_{H^3_r (D )} + \| k_1   - \bar{k}_1 \|_{H^3_r (D )} \leq C_0(\epsilon + \|\psi\|_{H_r^4(D)})\leq C_0 (\eps + \de_0), \\\label{coe19}
  && \| \p_r k_{11} \|_{L^{\oo}_r (D)} + \| \p_r k_1 \|_{L^{\oo}_r (D)} \leq C_0(\epsilon + \|\psi\|_{H_r^4(D)})\leq C_0 (\eps + \de_0),
  \\\label{coe12}
  &&
  \| \f {k_{12}}r   \|_{H^2_r (D)} + \| k_{12}   \|_{L^{\oo}_r (D)} + \| \p_r k_{12}   \|_{H^2_r (D)}
  + \| \p_{x_1} k_{12} \|_{L^{\oo}_r (D)} \leq C_0(\epsilon + \|\psi\|_{H_r^4(D)}) \le C_0 ( \eps + \de_0) , \\\label{coe13}
  &&
  \| \f {k_2}r  \|_{H^2_r (D)} + \| k_2   \|_{L^{\oo}_r (D)}  + \| \p_r k_2 \|_{H^2_r (D)} + \| \p_{x_1} k_2\|_{L^{\oo}_r (D)}\leq C_0(\epsilon + \|\psi\|_{H_r^4(D)})^2  \le C_0 ( \eps + \de_0)^2 ,  \\\label{coe10}
   &&
   \| \p_{x_1}^2 k_{12} \|_{L^4_r (D)} + \| \na \p_{x_1}^2 k_{12} \|_{L^2_r (D)}\leq C_0(\epsilon + \|\psi\|_{H_r^4(D)}) \le C_0 ( \eps + \de_0), \\\label{coe9}
  &&
  \| \p_{x_1}^2 k_2 \|_{L^2_r (D)} + \| \na \p_{x_1}^2 k_2 \|_{L^2_r (D)}\leq C_0(\epsilon + \|\psi\|_{H_r^4(D)})^2 \le  C_0 ( \eps + \de_0)^2,
  \\\label{coe14}
  &&
  \| F(\na \hat{\psi }+\epsilon \na \psi_0)\|_{H^3_r (D )} + \|\mc{F}(\na \hat{\psi}) \|_{H^2_r (D)} \le C_0(\eps + ( \eps + \de_0)^2),
  \ee
   and the compatibility conditions
  \be\label{coe15}
  &&
  k_{12} (\na \hat{\psi} + \eps \na \psi_0)(x_1, 0/1)  = \p_{r}^2\{ k_{12} (\na \hat{\psi} + \eps \na \psi_0)\}(x_1, 0/1) =0,\ \forall x_1\in [L_0,L_1],  \\\label{coe16}
  &&
  \p_r \{ k_{11} (\na \hat{\psi} + \eps \na \psi_0)\}(x_1, 0/1)  = \p_r \{k_1 (\na \hat{\psi} + \eps \na \psi_0)\}(x_1, 0/1) = 0,\ \forall x_1\in [L_0,L_1],\\\label{coe17}
  &&
  k_{12} (\na \hat{\psi} + \eps \na \psi_0 )(L_0, r) = 0, \ \forall r\in [1 - \beta_0, 1] ,\\\label{coe18}
  &&
  k_2 (\na \hat{\psi} + \eps \na \psi_0)(x_1, 0/1) = 0,\q  \p_r\{ F_0 (\na \hat{\psi})\}(x_1, 0/1)  = 0, \ \forall x_1\in [L_0,L_1].
\ee
\end{lemma}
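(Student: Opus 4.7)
The plan is to regard each of the coefficients $k_{11},k_{12},k_1,k_2$ and the source terms $F,\mathcal{F}$ as smooth nonlinear expressions of $\nabla\psi_1 := \nabla\hat\psi+\epsilon\nabla\psi_0$ composed with the smooth background, and to reduce the lemma to Moser-type composition and product estimates in the weighted spaces $H^m_r(D)$. Once one notes that $\|\cdot\|_{H^m_r(D)}$ coincides with the standard Sobolev norm $H^m(\Omega)$ of the three-dimensional axisymmetric lift, all the usual chain-rule, product, and embedding inequalities transfer. In particular $H^2_r(D)\hookrightarrow L^\infty(D)$ together with $\|\hat\psi\|_{H^4_r}\le\delta_0$ and the regularity of $\psi_0$ inherited from $h_1/r\in H^2_r([0,1])$ and $h_1'\in H^2_r([0,1])$ show that $\nabla\psi_1$ is uniformly small in $L^\infty$, so every denominator $c^2(\rho)-(\partial_r\psi_1)^2$ stays uniformly positive and each smooth function used below is evaluated on a fixed compact ball.

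\textbf{Two families of coefficients.} With that framework in place, the estimates split naturally into two families. The coefficients $k_{11}$ and $k_1$ depend on $\partial_r\psi_1$ only through $(\partial_r\psi_1)^2$, so Taylor expansion at $(\bar u,0)$ writes $k_{11}-\bar k_{11}$ and $k_1-\bar k_1$ as smooth functions of $\nabla\hat\psi+\epsilon\nabla\psi_0$ vanishing at the origin; the chain rule then yields the linear smallness $\epsilon+\delta_0$ in \eqref{coe11}, and Sobolev embedding gives the $L^\infty$ bound \eqref{coe19}. The coefficients $k_{12}$ and $k_2$ admit the explicit factorizations
\[
k_{12}=-\partial_r\psi_1\,\tilde G_{12}(\nabla\psi_1),\qquad k_2=\frac{(\partial_r\psi_1)^2}{r}\,\tilde G_2(\nabla\psi_1),
\]
with $\tilde G_{12},\tilde G_2$ smooth and bounded, which transfers the smallness of $\partial_r\psi_1$ to one power of $\epsilon+\delta_0$ in \eqref{coe12} and \eqref{coe10}, respectively two powers in \eqref{coe13} and \eqref{coe9}. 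For the source terms, $F$ is quadratic in $\nabla\psi_1$, while $\mathcal{F}$ carries an explicit $\epsilon$ prefactor in front of expressions linear in $\nabla\psi_0$ and $(\partial_r^2+\tfrac{1}{r}\partial_r)\psi_0$, which combined give \eqref{coe14} immediately.

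\textbf{Main obstacle: the $1/r$ factors and boundary substitutions.} The genuinely delicate step is controlling $k_{12}/r$, $k_2/r$, and related quantities in $H^2_r(D)$, because a naive application of the product rule would hit $\partial_r\psi_1/r$ with a singularity at the axis. The plan here is to exploit the compatibility $\partial_r\hat\psi(x_1,0)=0$ combined with $\hat\psi\in H^4_r(D)$, which precisely encodes $\tfrac{1}{r}\partial_r\hat\psi\in H^2_r(D)$ (this is the role of the extra term in the $H^4_r$ norm), together with $h_1(0)=0$ from \eqref{cp2} and $h_1'\in H^2_r$ giving $\tfrac{1}{r}\partial_r\psi_0\in H^2_r(D)$. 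Then $k_{12}/r=\tfrac{\partial_r\psi_1}{r}\tilde G_{12}$ and $k_2/r=\bigl(\tfrac{\partial_r\psi_1}{r}\bigr)^2\tilde G_2$ are products of $H^2_r$ factors to which Moser multiplication applies cleanly. The boundary identities \eqref{coe15}--\eqref{coe18} are then obtained by direct substitution into \eqref{coe}: $\partial_r\psi_1(x_1,0/1)=0$ forces $k_{12}=k_2=0$ on the walls; differentiating the slip condition in $x_1$ gives $\partial_{x_1 r}\psi_1(x_1,0/1)=0$, which together with the quadratic dependence of $k_{11},k_1$ on $\partial_r\psi_1$ yields $\partial_r k_{11}=\partial_r k_1=0$ on $r=0,1$; the vanishing of $\partial_r^2 k_{12}$ on the walls uses the odd dependence of $k_{12}$ on $\partial_r\psi_1$ together with the axisymmetric compatibility $\partial_r^3\hat\psi(x_1,0)=0$ built into $\Sigma_{\delta_0}$; and \eqref{bch} localizes $k_{12}(L_0,\cdot)$ to be zero near $r=1$. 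Thus the only real difficulty is the careful $1/r$ bookkeeping that matches each weighted norm with the correct power of $\epsilon+\delta_0$; everything else is algebraic substitution.
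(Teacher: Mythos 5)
Your proposal is correct and follows essentially the same route as the paper's appendix proof: reduce everything to product/composition estimates in $H^m_r(D)$ via the equivalence with the three-dimensional Sobolev norms of the axisymmetric lift (the paper's Lemma \ref{alge-est}), exploit the explicit factorizations of $k_{12}$ and $k_2$ carrying one resp.\ two powers of $\partial_r\psi_1$, control the $1/r$ factors through $\tfrac1r\partial_r\hat\psi\in H^2_r(D)$ (built into the $H^4_r$ norm) and $h_1/r\in H^2_r$, and obtain the compatibility conditions by direct substitution using $\partial_r\psi_1(x_1,0/1)=0$. The only cosmetic difference is that for \eqref{coe19} the paper goes through its Lemma \ref{g2infinity} together with the wall conditions $\partial_r k_{11}(x_1,0/1)=0$, whereas you invoke the embedding $H^3_r(D)\subset C^1(\overline D)$ directly; both are valid and the paper itself uses that embedding elsewhere.
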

The proof of Lemma \ref{coe-estimate} is nontrivial and quite long, and will be given in the Appendix \S\ref{appendix}.
\begin{remark}
{\it The weighted norm in $H^m_r(D) (m=2,3,4)$ involves the singular operator $\frac{1}{r}\p_r$, thus the coefficients $k_{12}(\na \hat{\psi }), k_2(\na \hat{\psi })$ behave quite different from $k_{11}(\na \hat{\psi }), k_1(\na \hat{\psi })$, these tedious issues essentially come from the artificial singularity near the axis.}
\end{remark}

The function $\hat{\psi}+\epsilon \psi_0$ can be approximated by a sequence of $C^4(\overline{D})$ smooth functions $\{\psi_n\}_{n\geq 1}$ such that $k_{11}(\nabla \psi_n), k_1(\nabla \psi_n), F_0(\nabla \psi_n)$ converge to $k_{11}, k_1, F_0(\nabla \hat{\psi}) \in H^3_r (D)$ in $H^3_r (D)$ which also satisfy the compatibility conditions in \eqref{coe16} and \eqref{coe18}. And $\p_r \{k_{12}(\nabla \psi_n)\}, \frac{k_{12}(\nabla \psi_n)}r, \p_r \{k_{2}(\nabla \psi_n)\} , \frac{k_{2}(\nabla \psi_n)}r \in C^2 (\overline{D}) $ converge to $\p_r k_{12}, \frac {k_{12}}r, \p_r k_2, \frac{k_2} r$ in $H^2_r (D)$ satisfying the compatibility conditions in \eqref{coe15},\eqref{coe17} and \eqref{coe18}. Therefore, in the following we assume that $k_{11}$, $k_1 \in C^3 (\overline{D})$, $\p_r k_{12}$, $\f {k_{12}}r$, $\p_r k_2$, $\f {k_2} r \in C^2 (\overline{D})$ satisfy the estimates \eqref{coe11}-\eqref{coe14} and the compatibility conditions \eqref{coe15}-\eqref{coe18}.



The following Lemma gives the $H^1_r (D)$ energy estimate for \eqref{linearized2}. The proof is based on an old idea by Friedrichs \cite{Friedrichs1958} to find a multiplier for \eqref{linearized2} and show that the boundary conditions posed in \eqref{linearized2} are admissible. The properties of the background flow proved in Lemma \ref{bkg-coe} play a crucial role.

\begin{lemma}\label{H1estimate}
There exist $\epsilon_*>0$ and $\de_* >0$ depending only on the background flow and the boundary data, such that if $0<\epsilon<\epsilon_*$ and $0 < \de_0 \le \de_*$ in Lemma \ref{coe-estimate}, the classical solution to \eqref{linearized2} satisfies 
\begin{equation}\label{H1}
\iint_D ( |\psi (x_1,r)|^2 + |\na \psi (x_1, r)|^2 ) r dr dx_1  \le C_* \iint_D F_0^2 r dr  dx_1,
\end{equation}
where $C_*$ depends only on the $H^3_r (D )$ norms of $k_{11}$, $k_1$, the $H^2_r (D)$ norms of $\p_r k_{12}$, $\f {k_{12}}r$ and the $L^{\oo}$ norm of $k_2$.
\end{lemma}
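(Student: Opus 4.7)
\textbf{Proof plan for Lemma \ref{H1estimate}.}

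The plan is to run a Friedrichs multiplier argument adapted to the singular Keldysh setting. The background structural estimate \eqref{6}--\eqref{7} with $j=0$ suggests the multiplier
\[
M(x_1,r) := d(x_1)\,\p_{x_1}\psi(x_1,r),\qquad d(x_1)=6(x_1-d_0)<0,
\]
already identified in Lemma \ref{bkg-coe}. We multiply the equation $\mc L\psi = F_0$ by $M\cdot r$ and integrate over $D$ with the axisymmetric measure $r\,dr\,dx_1$, then integrate by parts term by term.

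The principal terms produce, after integration by parts in $x_1$ and $r$, the quadratic form
\[
\iint_D\!\left[-\tfrac12(k_{11}d)' + k_1 d\right]r(\p_{x_1}\psi)^2\,dr\,dx_1 \;+\; \tfrac12\iint_D d'\,r(\p_r\psi)^2\,dr\,dx_1 \;+\; \mc B,
\]
where $\mc B$ collects the boundary contributions. The $x_1$-boundary terms from the $k_{11}\p_{x_1}^2\psi$ piece, namely $\tfrac12[k_{11}d(\p_{x_1}\psi)^2r]_{L_0}^{L_1}$, are \emph{both} nonnegative: $k_{11}(L_0,\cdot)>0,\,d(L_0)<0$ and $k_{11}(L_1,\cdot)<0,\,d(L_1)<0$ give the right signs at the inflow and outflow respectively. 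The $r$-boundary terms from $\p_r^2\psi$ vanish because of $\p_r\psi(x_1,0)=\p_r\psi(x_1,1)=0$ (together with the extra $r$ factor at the axis), while the $x_1$-boundary from that piece gives $-\tfrac12 d(L_1)\int_0^1 r(\p_r\psi(L_1,r))^2\,dr\ge 0$ (the $L_0$ piece vanishes since $\psi(L_0,\cdot)\equiv 0$). The key algebraic cancellation is that the two linear combinations of $\p_r\psi\,\p_{x_1}\psi$ produced respectively by $\p_r^2\psi$ and $\tfrac1r\p_r\psi$ after IBP in $r$ cancel exactly, so the singular operator contributes only the harmless $\tfrac12 d'r(\p_r\psi)^2=3r(\p_r\psi)^2$. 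Invoking \eqref{6}--\eqref{7} with $j=0$ on the background coefficients, plus the smallness $\|k_{11}-\bar k_{11}\|_{L^\infty}+\|k_1-\bar k_1\|_{L^\infty}\lesssim \eps+\de_0$ from Lemma \ref{coe-estimate}, the volume quadratic form dominates $(4-C(\eps+\de_0))\iint r(\p_{x_1}\psi)^2 + 3\iint r(\p_r\psi)^2$.

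It remains to absorb the mixed-coefficient terms and the right-hand side. For the cross term $2k_{12}\p_{x_1r}^2\psi$, we integrate by parts in $r$; the compatibility condition \eqref{coe15}, $k_{12}(x_1,0)=k_{12}(x_1,1)=0$, kills the $r$-boundary, and what remains is $-\iint(dr\p_r k_{12}+dk_{12})(\p_{x_1}\psi)^2\,dr\,dx_1$. Writing $dk_{12}=dr\cdot(k_{12}/r)$ and using the Sobolev embedding $H^2_r(D)\hookrightarrow L^\infty(D)$ together with the smallness estimate \eqref{coe12}, both coefficients are $O(\eps+\de_0)$ in $L^\infty$ and the entire contribution is absorbed on the left. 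The lower-order term $k_2\p_r\psi\cdot d\p_{x_1}\psi\cdot r$ is controlled by Cauchy--Schwarz together with $\|k_2\|_{L^\infty}\lesssim(\eps+\de_0)^2$ from \eqref{coe13}. The right-hand side $F_0\cdot d\p_{x_1}\psi\cdot r$ is split by Cauchy--Schwarz with weight to give $C\|F_0\|_{L^2_r(D)}^2+\eta\|\p_{x_1}\psi\|_{L^2_r(D)}^2$, with $\eta$ absorbed. Finally, the missing $L^2_r$-bound on $\psi$ itself is recovered from the inflow Dirichlet condition $\psi(L_0,r)=0$ via the weighted Poincar\'e estimate
\[
\|\psi\|_{L^2_r(D)}\le (L_1-L_0)\|\p_{x_1}\psi\|_{L^2_r(D)},
\]
proved by writing $\psi(x_1,r)=\int_{L_0}^{x_1}\p_{x_1}\psi(s,r)\,ds$ and Cauchy--Schwarz in $s$, which preserves the $r\,dr$ measure.

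The main obstacle is precisely the interaction of the multiplier with the singular axial operator $\tfrac1r\p_r$ and the cross coefficient $k_{12}$: the naive approach does not see that the $\p_r^2\psi$-$\tfrac1r\p_r\psi$ mixed remainder cancels and that $k_{12}/r$ (not $k_{12}$ itself) is the right quantity to bound. These two observations, together with the vanishing of $k_{12}$ at $r=0,1$ and the positive-acceleration inequality of Lemma \ref{bkg-coe}, are what make the multiplier $d(x_1)\p_{x_1}\psi$ succeed; once they are in place, choosing $\eps_*,\de_*$ small enough to absorb the perturbative pieces closes the estimate with $C_*$ depending only on the norms of the coefficients listed in the statement.
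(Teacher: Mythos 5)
Your proposal is correct and follows essentially the same route as the paper: the multiplier $d(x_1)\p_{x_1}\psi$ with $d=6(x_1-d_0)$, the integration-by-parts identity in which the $\p_r^2\psi$ and $\tfrac1r\p_r\psi$ contributions combine to leave only $\tfrac12 d'(\p_r\psi)^2$, the sign analysis of the $x_1$-boundary terms via $d<0$, $k_{11}(L_0,\cdot)>0$, $k_{11}(L_1,\cdot)<0$, the absorption of the $k_{12}$ and $k_2$ terms through $\|\p_r k_{12}\|$, $\|k_{12}/r\|$, $\|k_2\|_{L^\infty}$ from Lemma \ref{coe-estimate}, and the Poincar\'e step from $\psi(L_0,\cdot)=0$. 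No gaps.
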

\begin{proof}
Let $d (x_1) = 6 (x_1 - d_0) < 0$ for $x_1 \in [L_0, L_1]$. Multiplying $\eqref{linearized2}_1$ by $d (x_1) \p_{x_1} \psi $ and integration by parts in $ D $ lead to
\be\no
&& \iint_D d(x_1) \p_{x_1} \psi F_0  r dr dx_1  = \iint_D d(x_1) \p_{x_1} \psi \mc{L} \psi r dr dx_1 \\\no
&& = \iint_D \b[(d k_1 - \f 12  \p_{x_1} (d k_{11}) - d \p_r k_{12} - d \f { k_{12}}r) (\p_{x_1} \psi)^2 + \f 12 d^{\prime} (\p_r \psi)^2  +  d k_2 \p_{x_1} \psi \p_r \psi \b] r dr d{x_1}\\\label{4}
&& \q  + \f 12 \int_0^1[d(x_1) k_{11} (\p_{x_1} \psi )^2 - d (\p_r \psi )^2 ] \b|_{x_1 = L_0}^{L_1} r dr.
\ee

According to \eqref{6}-\eqref{7}, there exist $\epsilon_*>0,\de_* >0$ such that if $0<\epsilon<\epsilon_*, 0<\de_0 \le \de_*$ in Lemma \ref{coe-estimate}, there holds
\be\no
&& k_1 d - \f 12 \p_{x_1} (d k_{11}) - d \p_r k_{12} - \f {d k_{12}}r \\\no
&& = \bar{k}_1 d - \f 12 (d \bar{k}_{11})^{\prime} + (k_1 - \bar{k}_1) d - \f 12 \p_{x_1} ((k_{11} - \bar{k}_{11})d) - d \p_r k_{12} - \f {d k_{12}}r \\\no
&& \ge 4 - \| (k_1 - \bar{k}_1) d \|_{L^{\oo}_r (D) } - \f 12 \| \p_{x_1} ((k_{11} - \bar{k}_{11})d) \|_{L^{\oo}_r (D) } - \| d\p_r k_{12} \|_{L^{\oo}_r (D) } - \b\| \f {d k_{12}}r \b\|_{L^{\oo}_r (D) } \\\no
&& \ge 3 , \q \forall (x_1 ,r) \in D ,\\\no
 &&  \| d k_2 \|_{L^{\oo}_r (D ) } \le C_0 ( \eps + \de_0),
\ee
due to the Sobolev embedding $H^3_r (D ) \subset C^{1}(\ol{D})$ and $H^2_r (D ) \subset L^{\oo }_r (D ) $. Note also $d (L_0) < 0$ and $d(L_1) < 0$, while $k_{11} (L_0, r) > 0$ and $k_{11} (L_1, r) < 0$, it can be inferred from \eqref{4} that
\begin{equation}\no
\iint_D d (x) \p_{x_1} \psi F_0 r  dr d x_1 \ge 2 \iint_D |\na \psi |^2 r dr dx_1 .
\end{equation}
Since $\psi(L_0, r) = 0$, the estimate \eqref{H1} is obtained.
\end{proof}


In order to show the existence and uniqueness of strong solutions to \eqref{linearized2}, we investigate the following singular perturbation system of \eqref{linearized2}, which includes an additional third order dissipation term and another two boundary conditions.
\begin{equation}\label{ap1}
\begin{cases}
  \mc{L}_{\si} \psi^{\si} = \si \p_{x_1}^3 \psi^{\si } + k_{11} \p_{x_1 }^2 \psi^{\si } + 2 k_{12} \p_{x_1 r}^2 \psi^{\si } + \p_{r}^2 \psi^{\si } + \f 1r \p_r \psi^{\si } + k_1 \p_{x_1} \psi^{\si } + k_2 \p_r \psi^{\si }  = F_0 (x_1,r),  \\
  \p_{x_1}^2 \psi^{\si } (L_0, r) = \p_{x_1}^2 \psi^{\si } (L_1, r) =0, \q \forall r \in [0, 1], \\
  \p_r \psi^{\si } (x_1, 0) = \p_r \psi^{\si } (x_1,1) =0, \q \forall x_1 \in [L_0, L_1],\\
   \psi^{\si } (L_0, r)  = 0, \q \forall r \in [0, 1].
\end{cases}
\end{equation}
This idea follows essentially from \cite{Kuzmin2002}. It should be noted that the supplementary boundary conditions $\p_{x_1 }^2 \psi^{\si } (L_0, r) = \p_{x_1}^2 \psi^{\si } (L_1, r) =0$ we employed are different from $\p_{x_1 } \psi^{\si } (L_0, r) = \p_{x_1} \psi^{\si } (L_1, r) =0$ in \cite{Kuzmin2002}, so that one can obtain a uniform $H^2_r $ estimate to \eqref{ap1} with respect to $\si$.  Thus the existence of a unique $H^2_r $ strong solution $\psi$ to \eqref{linearized2} can be derived directly by taking a limit $\si\to 0$.

The following lemma gives the $H^2_r$ estimate for the classical solutions of \eqref{ap1} uniformly in $\si$.
\begin{lemma}\label{H1estimate-ap}
Under the same assumptions in Lemma \ref{H1estimate}, the classical solution to \eqref{ap1} satisfies
  \begin{equation}\label{H1-ap1}
 \si \iint_D |\p_{x_1}^2 \psi^{\si }|^2 r dr dx_1 + \iint_D ( |\psi^{\si }|^2 + |\na \psi^{\si } |^2 ) r dr dx_1 \le C_* \iint_D F_0^2 r dr dx_1,\end{equation}
 \begin{equation}\label{H2-ap}
 \iint_D \b( |\na^2 \psi^{\si }|^2 + |\f 1r \p_r \psi^{\si }|^2 \b)r dr dx_1 \le C_* \iint_D (F_0^2 + |\na F_0|^2 ) r dr dx_1,
  \end{equation}
where $C_*$ depends only on the $H^3_r (D )$ norms of $k_{11}$, $k_1$, the $H^2_r (D)$ norms of $\p_r k_{12}$, $\f {k_{12}}r$, $\p_r k_2$ and the $L^{\oo}_r (D)$ norm of $k_{12}$, $\p_{x_1} k_{12}$ and $k_2$.

\end{lemma}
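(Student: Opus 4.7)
The plan is to establish both estimates via multiplier arguments designed for the Keldysh geometry and the modified endpoint conditions $\p_{x_1}^2 \psi^{\si}|_{x_1 = L_0, L_1} = 0$. For \eqref{H1-ap1} I would repeat the proof of Lemma \ref{H1estimate} verbatim with the multiplier $d(x_1) \p_{x_1} \psi^{\si}$, $d(x_1) = 6(x_1 - d_0) < 0$. The only new piece is the contribution of $\si \p_{x_1}^3 \psi^{\si}$: two successive integrations by parts in $x_1$ kill the outer boundary traces thanks to $\p_{x_1}^2 \psi^{\si}|_{L_0, L_1} = 0$ and produce the interior quantity $-\si \iint_D d (\p_{x_1}^2 \psi^{\si})^2 r \,dr\,dx_1 \ge 0$ (since $d < 0$), which is exactly the $\si$-weighted term on the left of \eqref{H1-ap1}. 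The surviving traces at $x_1 = L_0, L_1$ of the form $\pm 3\si (\p_{x_1}\psi^{\si})^2 r$ are combined with the non-dissipative boundary contribution $\f 12 [d k_{11}(\p_{x_1}\psi^{\si})^2 - d(\p_r \psi^{\si})^2]|_{L_0}^{L_1} r$: at $L_1$ one uses $d(L_1) k_{11}(L_1, \cdot) > 0$ (supersonic exit) and picks $\si$ small so that $\f 12 d(L_1) k_{11}(L_1,\cdot) - 3\si > 0$, while at $L_0$ both pieces are nonnegative once $\p_r \psi^{\si}(L_0,\cdot) = 0$ is used. This is precisely why the Kuzmin-type boundary condition $\p_{x_1}\psi^{\si}|_{L_0, L_1} = 0$ is replaced here by $\p_{x_1}^2 \psi^{\si}|_{L_0, L_1} = 0$.

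For \eqref{H2-ap} the key multiplier, suggested in the introduction, is $-d(x_1) \p_{x_1}\big(\p_r^2 \psi^{\si} + \f 1r \p_r \psi^{\si}\big)$. The basic algebraic observation is that for any $v$ with $\p_r v(x_1, 0) = \p_r v(x_1, 1) = 0$ the cross term $\int_0^1 2 \p_r^2 v \cdot \f 1r \p_r v \cdot r\,dr = \int_0^1 \p_r((\p_r v)^2)\, dr = 0$, hence
\begin{equation*}
\left\| \p_r^2 v + \f 1r \p_r v \right\|_{L^2_r([0,1])}^2 = \|\p_r^2 v\|_{L^2_r([0,1])}^2 + \b\|\f 1r \p_r v\b\|_{L^2_r([0,1])}^2.
\end{equation*}
Testing \eqref{ap1} against this multiplier and integrating by parts in $x_1$, the radial block $(\p_r^2 + \f 1r \p_r)\psi^{\si}$ of the equation produces the main positive quantity $3 \iint_D (\p_r^2 \psi^{\si} + \f 1r \p_r \psi^{\si})^2 r \,dr \, dx_1$, together with a nonnegative boundary trace at $x_1 = L_1$; the trace at $x_1 = L_0$ vanishes because $\psi^{\si}(L_0, \cdot) = 0$ forces $(\p_r^2 + \f 1r \p_r)\psi^{\si}(L_0,\cdot) = 0$. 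The contributions of $\si \p_{x_1}^3 \psi^{\si}$, $k_{11}\p_{x_1}^2 \psi^{\si}$, $2 k_{12}\p_{x_1 r}^2 \psi^{\si}$, $k_1 \p_{x_1}\psi^{\si}$, $k_2 \p_r \psi^{\si}$ and $F_0$ are handled by repeated integrations by parts in both $x_1$ and $r$, Cauchy-Schwarz, the sharp coefficient estimates of Lemma \ref{coe-estimate}, and the $\si$-weighted bound on $\p_{x_1}^2 \psi^{\si}$ already obtained in the $H^1_r$ step. The remaining norms $\|\p_{x_1 r}^2 \psi^{\si}\|_{L^2_r}$ and $\|\p_{x_1}^2 \psi^{\si}\|_{L^2_r}$ in $\|\na^2 \psi^{\si}\|_{L^2_r}$ are then recovered by applying the $H^1_r$ multiplier to the equation satisfied by $\p_{x_1}\psi^{\si}$ (which inherits Neumann data at the endpoints from $\p_{x_1}^2 \psi^{\si}|_{L_0, L_1} = 0$) and by algebraically rearranging \eqref{ap1}.

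The principal obstacle is the interplay between the singular operator $\f 1r \p_r$ and the multiplier $-d \p_{x_1}(\p_r^2 + \f 1r \p_r)\psi^{\si}$: the iterated $r$-integrations by parts produce boundary traces at $r = 0$ and $r = 1$ involving $k_{12}, k_2, \p_r k_{11}, \p_r k_1$, all of which must be shown to vanish, and this is exactly what the compatibility conditions $k_{12}|_{r=0,1} = k_2|_{r=0,1} = \p_r k_{11}|_{r=0,1} = \p_r k_1|_{r=0,1} = 0$ of Lemma \ref{coe-estimate} are designed to deliver. A secondary difficulty is to keep $C_*$ uniform as $\si \to 0$: every $\si$-dependent interior and boundary cross-term must either have the correct sign (guaranteed by $d < 0$ together with the new endpoint conditions) or be absorbed by the $\si$-weighted control of $\p_{x_1}^2 \psi^{\si}$ from Step 1.
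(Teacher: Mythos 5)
Your choice of multipliers is the paper's: $d(x_1)\p_{x_1}\psi^{\si}$ for \eqref{H1-ap1} and $-d(x_1)\p_{x_1}\big(\p_r^2\psi^{\si}+\f 1r\p_r\psi^{\si}\big)$ for the radial part of \eqref{H2-ap}, and your remarks on the orthogonality of $\p_r^2 v$ and $\f 1r\p_r v$, the sign of the $\si$-contributions under $\p_{x_1}^2\psi^{\si}|_{L_0,L_1}=0$, and the role of the compatibility conditions at $r=0,1$ are all sound. The genuine gap is in your plan for recovering $\|\p_{x_1}^2\psi^{\si}\|_{L^2_r}$ and $\|\p_{x_1 r}^2\psi^{\si}\|_{L^2_r}$ by "applying the $H^1_r$ multiplier to the equation satisfied by $w_1=\p_{x_1}\psi^{\si}$". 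Testing the $w_1$-equation with $d\,\p_{x_1}w_1$ globally, the term $\p_r^2 w_1\cdot d\,\p_{x_1}w_1$ integrates to $\f 12\iint_D d'(\p_r w_1)^2 r\,dr\,dx_1-\f 12\int_0^1 d(\p_r w_1)^2 r\,\big|_{L_0}^{L_1}dr$, and the inlet trace $\f 12 d(L_0)\int_0^1|\p_r w_1(L_0,r)|^2 r\,dr\le 0$ has the wrong sign: the Dirichlet condition $\psi^{\si}(L_0,\cdot)=0$ does not pass to $w_1$, and the inherited Neumann condition $\p_{x_1}w_1(L_0,\cdot)=0$ says nothing about $\p_r w_1(L_0,\cdot)=\p_{x_1 r}^2\psi^{\si}(L_0,\cdot)$. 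The paper closes this by localizing: near the inlet, where $k_{11}\ge\ka_*>0$, it tests the equation with $\eta_1^2\p_{x_1}^2\psi^{\si}$ (a cutoff supported in $[L_0,\f{L_0}4]$) and obtains the second derivatives there by ellipticity; it then uses the multiplier $\eta_2^2 d\,\p_{x_1}w_1$ with $\eta_2$ vanishing near $L_0$, so the bad trace disappears and the commutator terms supported in the transition zone of $\eta_2$ are absorbed by the elliptic estimate. This two-region splitting is the missing ingredient in your outline.

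A secondary issue is the order of your steps. You run the radial multiplier first and propose to control its $\si$-independent cross terms using only "the $\si$-weighted bound on $\p_{x_1}^2\psi^{\si}$ from the $H^1_r$ step". But the identity produced by $-d\,\p_{x_1}(\p_r^2+\f 1r\p_r)\psi^{\si}$ contains $\iint_D d\,\p_r k_{11}\,\p_{x_1 r}^2\psi^{\si}\,\p_{x_1}^2\psi^{\si}\,r\,dr\,dx_1$ (from commuting $\p_r$ through $k_{11}\p_{x_1}^2$), which needs the unweighted $L^2_r$ bound on $\p_{x_1}^2\psi^{\si}$; the $\si$-weighted bound alone costs a factor $\si^{-1/2}$ and destroys uniformity. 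This is repairable, since $\p_r k_{11}=O(\eps+\de_0)$ by \eqref{coe19} lets you couple the two inequalities and absorb, but it is cleaner to follow the paper's order: elliptic estimate near the inlet, then the $w_1$-estimate away from it, and only then the radial multiplier.
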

\begin{proof}
  For the sake of simplicity, we will omit the superscript $\si $ from the following argument. By choosing the same multiplier as in Lemma \ref{H1estimate}, one derives
  \be\no
  && \iint_D d(x_1) \p_{x_1} \psi F_0 r dr d{x_1}
  = \iint_D d (x_1) \p_{x_1} \psi \mc{L} \psi r dr d{x_1}\\\no
&& = \iint_D \b[(d k_1 - \f 12  \p_{x_1} (d k_{11}) - d \p_r k_{12} - d \f {k_{12}}r ) (\p_{x_1} \psi)^2 + \f 12 d^{\prime} (\p_r \psi)^2
+ d k_2 \p_{x_1} \psi \p_r \psi \b] r dr d{x_1}\\\no
&&
- \si \iint_D [ 6 \p_{x_1} \psi \p_{x_1}^2 \psi + d (\p_{x_1 }^2 \psi )^2  ] r dr d{x_1}
+ \int_0^1 \f 12 [d({x_1}) k_{11} (\p_{x_1} \psi )^2 - d (\p_r \psi )^2 ] \b|_{{x_1} = L_0}^{L_1} r dr.
\ee

Since $d (x_1 ) = 6 (x_1 - d_0) < 0$ for all $x_1 \in [L_0, L_1]$, then
$
 - \si \iint_D d (\p_{ x_1}^2 \psi )^2 r dr d{x_1} > 0 $,
and
\begin{equation}\no
 6 \si \iint_D \p_{x_1} \psi \p_{x_1}^2 \psi r dr d{x_1}
 \le   36 \si \iint_D (\p_{x_1} \psi)^2 r dr d{x_1} - \f {\si }4 \iint_D d({x_1}) (\p_{x_1 }^2 \psi )^2 r dr d{x_1} .
\end{equation}
As discussed in Lemma \ref{H1estimate}, for sufficiently small $\si > 0$ and $\epsilon,\delta$, there holds
\begin{equation}\label{H2ap0}
 \si \iint_D |\p_{x_1}^2 \psi|^2 r dr dx_1 + \iint_D |\na \psi |^2 r dr dx_1 \le C_* \iint_D F_0^2 r dr d x_1.
\end{equation}
Since $\psi (L_0, r) = 0$, \eqref{H1-ap1} holds.

Choose a monotonic decreasing cut-off function $\eta_1 \in C^{\oo} ([L_0, L_1])$ satisfying
\begin{equation*}
\eta_1 (x_1) =
\begin{cases}
  1, & \mbox{if } L_0 \le x_1 \le \f {L_0}2, \\
  0, & \mbox{if } \f {L_0}4 \le x_1 \le L_1.
\end{cases}
\end{equation*}

Multiplying $\eqref{ap1}_1$ by $\eta_1^2 \p_{x_1}^2 \psi$, integrations by parts give
\be\no
&&  \iint_D [(- \si \eta_1 \eta_1^{\prime} + \eta_1^2 k_{11}) (\p_{x_1}^2 \psi)^2 + \eta_1^2 (\p_{x_1 r}^2 \psi )^2 ]r dr dx_1
=
- 2 \iint_D \eta_1 \eta_1^{\prime} \p_r \psi \p_{x_1 r}^2 \psi r dr dx_1\\\no
&& \q
+ \iint_D \eta^2_1 (F_0 -k_1 \p_{x_1} \psi - k_2 \p_r \psi ) \p_{x_1}^2 \psi r dr dx_1
- 2 \iint_D \eta_1^2  k_{12} \p_{ x_1}^2 \psi \p_{x_1 r}^2 \psi r dr dx_1.
\ee
Since $\eta_1 $ is monotonically decreasing, then $- \si \iint_D \eta_1 \eta_1^{\prime} (\p_{x_1}^2 \psi)^2 r dr dx_1 > 0$.
Note that $\bar{k}_{11} \geq 2\ka_* >0$ for all $x_1 \in [L_0, \f {L_0}8]$ with some positive constant $\ka_*$, then
\begin{equation}\no
k_{11} (x_1, r) \ge \ka_* > 0, \q \forall (x_1, r) \in [L_0, \frac {L_0}8] \times [0, 1].
\end{equation}
Therefore,
\be\no
&\quad&\ka_* \int_{L_0}^{\f {L_0}2} \int_0^1 (|\p_{x_1}^2 \psi |^2 + |\p_{x_1 r}^2 \psi|^2) r dr dx_1\leq \iint_D \eta_1^2(|\p_{x_1}^2 \psi |^2 + |\p_{x_1 r}^2 \psi|^2) r dr dx_1\\\no
& \leq & \f {\ka_*}2 \iint_D \eta_1^2(|\p_{x_1}^2 \psi |^2 + |\p_{x_1 r}^2 \psi|^2) r dr dx_1 + \frac {C}{\ka_*} \iint_{D}(|F_0|^2 +(k_1^2+k_2^2+|\eta_1'|^2) |\na \psi|^2 ) r dr dx_1\\\label{H2ap1}
& \leq & C_* \iint_D |F_0|^2 r dr dx_1.
\ee

Denote $w_1 = \p_{x_1} \psi$. Then $w_1$ solves
\begin{equation}\label{ap-w1}
\begin{cases}
  \si \p_{x_1}^3 w_1 + k_{11} \p_{x_1}^2 w_1 + 2 k_{12} \p_{x_1 r}^2 w_1 + \p_{r}^2 w_1 + \f 1r \p_r w_1 + k_3 \p_{x_1} w_1
+ k_4 \p_r w_1 = F_1 (x_1, r), \\
  \p_{x_1} w_1 (L_0, r) = \p_{x_1} w_1 (L_1 , r) = 0  , \q \forall r \in [0,1],\\
  \p_r w_1 (x_1, 0) = \p_r w_1 (x_1, 1) = 0, \q \forall x_1 \in [L_0, L_1],
\end{cases}
\end{equation}
where
\begin{equation}\no
k_3 = \p_{x_1} k_{11} + k_1 , \q k_4 = 2 \p_{x_1} k_{12} + k_2, \q
F_1 (x_1, r) =  \p_{x_1} F_0 - \p_{x_1} k_1 \p_{x_1} \psi - \p_{x_1} k_2 \p_r \psi .
\end{equation}

Select another monotonic increasing cut-off function $\eta_2 \in C^{\oo} ([L_0, L_1])$ such that
\begin{equation*}
\eta_2 (x_1) =
\begin{cases}
  0, & \mbox{if } L_0 \le x_1 \le \f {3}4 L_0, \\
  1, & \mbox{if } \f {L_0}2 \le x_1 \le L_1.
\end{cases}
\end{equation*}
Multiplying $\eqref{ap-w1}_1$ by $\eta_2^2 d(x_1) \p_{x_1} w_1$ and integrating by parts yield
\be\no
&& \iint_D \eta_2^2 d(x_1) \p_{x_1} w_1 F_1 ({x_1}, r) r dr d{x_1} = - \si \iint_D (2 \eta_2 \eta_2^{\prime} d + \eta_2^2 d^{\prime}  ) \p_{x_1} w_1  \p_{x_1 }^2 w_1 r dr dx_1\\\no
&& \q
- \si \iint_D \eta_2^2 d(x_1) (\p_{x_1 }^2 w_1)^2 r dr dx_1 + \f 12 \iint_D \p_{x_1} (\eta_2^2 d ) (\p_r w_1)^2 r dr dx_1
\\\no
&& \q
+ \iint_D [ \eta_2^2 d k_3 - \eta_2^2 d (\p_r k_{12} + \f 1r k_{12}) - \f 12 \p_{x_1} (\eta^2_2 d k_{11}) ] (\p_{x_1} w_1)^2 r dr dx_1
 \\\no
&& \q
+ \iint_D \eta_2^2 d  (  2 \p_{x_1} k_{12} + k_2)  \p_{x_1} w_1 \p_r w_1 r dr dx_1
 - \f 12 d(L_1) \int_0^1 (\p_r w_1(L_1,r))^2 r dr,
\ee
where we use $\p_{x_1}^2 \psi (L_0, r) = \p_{x_1}^2 \psi (L_1, r) = 0$ for any $r \in [0, 1]$. By \eqref{6}-\eqref{7} and some estimates in Lemma \ref{coe-estimate}, there holds
 \be\no
 && \int_{\f {1}2 L_0}^{L_1} \int_0^1 (|\nabla w_1|^2+ \sigma (\p_{x_1}^2 w_1)^2) r dr dx_1+\int_0^1(\p_r w_1(L_1,r))^2 r dr\\\no
 && \leq \iint_D \eta_2^2 |\na w_1|^2 r dr dx_1  + \si \iint_D\eta_2^2(\p_{x_1 }^2 w_1)^2 r dr dx_1+ \int_0^1(\p_r w_1(L_1,r))^2 r dr\\\no
 &&\leq  C_*\iint_D (\eta_2')^2 |\nabla w_1|^2 rdr dx_1 + C_* \iint_D \eta_2^2 F_1^2 rdr dx_1\\\no
 && \le C_* \int_{\f {3}4 L_0}^{\frac12 L_0} \int_0^1 |\nabla w_1|^2 r dr dx_1 + C_* \iint_D F_1^2 r dr dx_1.
 \ee
Combining with \eqref{H2ap0} and \eqref{H2ap1} yield
\begin{equation}\label{H2ap6}
\iint_D [(\p_{x_1}^2 \psi)^2 +(\p_{x_1r}^2 \psi)^2 ]  r dr dx_1\leq C_*\iint_D (F_0^2 +(\p_{x_1} F_0)^2) rdr dx_1.
\end{equation}

Define $\nu_1 = \p_r \psi $, then
\begin{equation*}
\begin{cases}
 \si \p_{x_1}^3 \psi + k_{11} \p_{x_1 }^2 \psi + 2 k_{12} \p_{x_1 r}^2 \psi + \p_r \nu_1 + \f 1r \nu_1 + k_1 \p_{x_1} \psi + k_2 \p_r \psi  = F_0 (x_1,r), & \\
  \nu_1 (x_1, 0) = \nu_1 (x_1, 1) = 0, & \forall x_1 \in [L_0, L_1], \\
  \nu_1 (L_0, r) = \p_{x_1 }^2 \nu_1 (L_0, r) = \p_{x_1 }^2 \nu_1 (L_1, r) = 0, & \forall r \in [0, 1].
\end{cases}
\end{equation*}
Multiplying the above equation by $- d (x_1) \p_{x_1 } (\p_r \nu_1 + \f {\nu_1}r)$ and integrating by parts give
\be\no
&& \iint_D d (x_1) \p_r F_0 \p_{x_1} \nu_1 r dr dx_1
= - \si \iint_D 6 \p_{x_1} \nu_1 \p_{x_1}^2 \nu_1 + d (x_1) |\p_{x_1}^2 \nu_1|^2 r dr dx_1 \\\no
& & \q + \iint_D \b( d (x_1) k_1  - \f 12 \p_{x_1} (d k_{11}) + d (x_1) (\p_r k_{12} - \f {k_{12}}r ) \b) |\p_{x_1} \nu_1|^2 r dr dx_1
 \\\no
&& \q +  3 \iint_D   \b( |\p_r \nu_1|^2  + \b|\f {\nu_1}r\b|^2 \b)r dr dx_1
 + \iint_D d (x_1) \p_{x_1} \nu_1 \p_r k_{11} \p_{x_1}^2 \psi  r dr dx_1
  \\\no
&& \q  + \iint_D d (x_1) \p_r k_1 \p_{x_1} \nu_1 \p_{x_1} \psi r dr dx_1
+ \iint_D d (x_1) ( \p_r k_2 \nu_1 + k_2 \p_r \nu_1 ) \p_{x_1} \nu_1 r dr dx_1
 \\\label{ap-v1}
&& \q  + \f 12 \int_0^1 d (x_1) k_{11} (\p_{x_1} \nu_1)^2 \b|_{x_1=L_0}^{L_1} r dr
- \f 12 d(L_1) \int_0^1 ( |\p_r \nu_1|^2 + |\f 1r \nu_1|^2 ) (L_1, r) r dr.
\ee

Given that $d (x_1) < 0$ for all $x_1 \in [L_0, L_1]$, and that $k_{11} (L_0, r) >0$ and $k_{11} (L_1, r) < 0$ for any $r \in [0, 1]$, we conclude that
\be\no
&& \f 12 \int_0^1 d (x_1) k_{11} (\p_{x_1} \nu_1)^2 \b|_{x_1=L_0}^{L_1} r dr - \f 12 d(L_1) \int_0^1 ( |\p_r \nu_1|^2 + |\f 1r \nu_1|^2 ) (L_1, r) r dr > 0,\\\no
&&  6 \si \iint_D  \p_{x_1} \nu_1 \p_{x_1}^2 \nu_1 r dr dx_1
 \le  36 \si \iint_D |\p_{x_1} \nu_1|^2 r dr dx_1 - \f {\si}4 \iint_D d(x_1) |\p_{x_1}^2 \nu_1|^2 r dr dx_1.
\ee
Using \eqref{6}-\eqref{7} and some estimates in Lemma \ref{coe-estimate}, one has
\begin{equation}\no
 \iint_D \b(|\p_{x_1} \nu_1|^2 +  |\p_r \nu_1|^2 + \b| \f {\nu_1}{r} \b|^2 \b) r dr dx_1 + \iint_D d \p_r k_{11} \p_{x_1} \nu_1 \p_{x_1}^2 \psi r dr dx_1 \le  C_* \iint_D (|F_0|^2 + |\na F_0|^2) r dr dx_1.
\end{equation}
This, combined with \eqref{H2ap1} and \eqref{H2ap6}, yields that
\begin{equation}\no
 \iint_D \b(|\p_{x_1} \nu_1|^2 +  |\p_r \nu_1|^2 + \b| \f {\nu_1}{r} \b|^2 \b) r dr dx_1 + \iint_{D} (|\p_{x_1}^2 \psi |^2 + |\p_{x_1 r}^2 \psi |^2 ) r dr dx_1
\le
C_* \iint_D ( |F_0|^2 + |\na F_0|^2 ) r dr dx_1 .
\end{equation}
Summing up, one gets \eqref{H2-ap}.
\end{proof}

Now we show the existence of an orthonormal basis of $L^2_r([0,1])$ which are eigenfunctions of the operator $\p_r^2+\frac{1}{r}\p_r$. The following lemma can be proved by using Theorem 6.3.4 in \cite{BB92}.

\begin{lemma}\label{eigen}
There exists a monotone increasing nonnegative real number sequence $\{ \la_j \}_{j = 1}^{\oo}$ of eigenvalues and the associated eigenfunctions $\{ b_j (r) \}_{j = 1}^{\oo}$ to the following eigenvalue problem
  \begin{equation*}
\begin{cases}
  - b_j''(r) - \f 1r b_j'(r) = \la_j b_j(r), \q r \in [0,1], \\
  b_j' (0) = b_j' (1) = 0.
\end{cases}
\end{equation*}
Furthermore, $\{ b_j \}_{j = 1}^{\oo}$ is an orthonormal basis of $L^2_r ([0,1])$ with the inner product
\begin{equation}\label{l2inner}
(b (r), q (r) )_0 = \int_0^1 b (r) q (r) r dr.
\end{equation}
\end{lemma}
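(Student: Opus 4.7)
The plan is to cast the singular eigenvalue problem as a standard Sturm--Liouville problem in self-adjoint form with weight $r$, and then apply the spectral theory of compact self-adjoint operators on a suitable weighted Hilbert space. First I would multiply the equation by $r$ to rewrite it as
\begin{equation*}
-(r b_j'(r))' = \lambda_j\, r\, b_j(r), \quad r \in (0,1),
\end{equation*}
which exhibits the operator as formally self-adjoint with respect to the weight $r$, matching the inner product $(\cdot,\cdot)_0$ defined in \eqref{l2inner}. The natural functional setting is the weighted Sobolev space $V := \{b \in L^2_r([0,1]) : b' \in L^2_r([0,1])\}$ equipped with the bilinear form $a(b,q) := \int_0^1 b'(r) q'(r) r\,dr$.

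Next I would verify the hypotheses needed to invoke the general spectral theorem (e.g., Theorem 6.3.4 in \cite{BB92}). The form $a$ is symmetric, continuous, and nonnegative on $V$, and $a(b,b) + (b,b)_0$ is coercive on $V$. The Neumann boundary conditions $b'(0) = b'(1) = 0$ are the natural boundary conditions associated with this form, so no essential boundary conditions appear in $V$. The central analytic input is the compactness of the embedding $V \hookrightarrow L^2_r([0,1])$; this can be derived from the standard Rellich--Kondrachov theorem applied on $[\eta,1]$ for any $\eta>0$, combined with a uniform control near $r=0$ using the Hardy-type inequality $\int_0^{\eta}|b|^2 r\,dr \le C\eta^2 \int_0^{\eta}|b'|^2 r\,dr + C\eta^2 \int_{\eta}^{2\eta}|b|^2 r\,dr$ that comes from the radial interpretation of $b$ as a function on the unit disc.

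Given compactness and coercivity, the bounded inverse $T := (a + (\cdot,\cdot)_0)^{-1} : L^2_r([0,1]) \to L^2_r([0,1])$ exists, is self-adjoint, and is compact. The spectral theorem then produces a nonincreasing sequence of positive eigenvalues $\mu_j \to 0$ for $T$ together with an orthonormal basis $\{b_j\}_{j=1}^{\infty}$ of $L^2_r([0,1])$ consisting of eigenfunctions. Setting $\lambda_j = \mu_j^{-1} - 1 \ge 0$ (monotone increasing) recovers the eigenvalue sequence in the statement, with $\lambda_1 = 0$ corresponding to the constant eigenfunction. Elliptic regularity away from $r=0$ (and the fact that $b_j$, as eigenfunctions of the radial Laplacian with Neumann data, are smooth in the disc after the identification $\check b_j(x')=b_j(|x'|)$) gives the smoothness needed to make sense of the classical boundary conditions $b_j'(0)=b_j'(1)=0$.

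The main obstacle is the singular coefficient $\tfrac{1}{r}$ at $r=0$: one has to justify both the correct functional framework at the endpoint $r=0$ (where no boundary condition in the usual trace sense is available) and the compactness of the embedding $V \hookrightarrow L^2_r$. Both issues are resolved most cleanly by viewing $b(r)$ as the radial part of an axisymmetric function on the two-dimensional unit disc, which converts the weighted problem on $[0,1]$ into the standard Neumann eigenvalue problem for $-\Delta$ on the disc restricted to axisymmetric functions; the singularity at $r=0$ is then purely an artifact of polar coordinates, and the conclusion follows directly from classical spectral theory for the Laplacian on a bounded smooth domain.
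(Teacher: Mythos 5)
Your proposal follows essentially the same route as the paper, which proves this lemma simply by invoking Theorem 6.3.4 of \cite{BB92} (the spectral theorem for the compact self-adjoint inverse of a coercive symmetric form) with no further detail; you supply exactly the verification that citation presupposes, and the identification of $b(r)$ with an axisymmetric function on the unit disc is the standard and correct way to obtain compactness of $V\hookrightarrow L^2_r([0,1])$ and to interpret the boundary behavior at $r=0$. The only slip is the displayed Hardy-type inequality, which fails for constant functions as written (the factor $\eta^2$ should not multiply $\int_{\eta}^{2\eta}|b|^2 r\,dr$), but this is immaterial since the corrected inequality --- or the disc argument you also give --- yields the required compact embedding.
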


Define the approximated solution as
\begin{equation}\label{app1}
\psi^{N, \si} (x_1, r) = \sum_{j =1}^N A_j^{N, \si } (x_1) b_j (r),
\end{equation}
 which satisfies the following $N$ linear equations on $[L_0, L_1]$,
\begin{equation*}
\begin{cases}
\int_0^1 \mc{L}_{\si} \psi^{N, \si } (x_1, r) b_m (r) r dr = \int_0^1 F_0 (x_1, r) b_m (r) r dr , \q m = 1, \cdots, N, \\
\p_{x_1}^2 \psi^{N, \si } (L_0, r) = \p_{x_1}^2 \psi^{N, \si } (L_1, r) = 0,\\
\psi^{N, \si} (L_0, r) =0.
\end{cases}\end{equation*}
Thus $\{A^{N, \si }_j\}_{j=1}^N$ solves
\begin{equation}\label{app2}
\begin{cases}
 \si \f {d^3}{ d x_1^3} A_m^{N, \si }
   + \sum_{j= 1}^N a_{jm}\f {d^2}{d x_1^2} A_j^{N, \si }
   + \sum_{j= 1}^N b_{jm} \f d {d x_1} A_j^{N, \si }
   + \sum_{j= 1}^N c_{jm} A_j^{N, \si }
   = F_{0m} (x_1), \forall x_1\in [L_0,L_1],
   \\
   A_m^{N, \si } (L_0) = \f {d^2}{d x_1^2} A_m^{N, \si } (L_0) = 0,\\
   \f {d^2}{d x_1^2} A_m^{N, \si } (L_1) = 0,
\end{cases}
\end{equation}
 where
\be\no
&& a_{jm}=\int_0^1 k_{11} (x_1, r) b_j (r) b_m (r) r dr,\ \ \ b_{jm}= \int_0^1 (k_1 (x_1, r) b_j (r) + 2 k_{12} b_j^{\prime} )b_m (r) r dr,\\\no
&& c_{jm}=- \la_j \de_{jm} + \int_0^1 k_2 (x_1,r ) b_j^{\prime} (r) b_m (r) r dr, \ \ F_{0m}(x_1)= \int_0^1 F_0 (x_1, r) b_m (r)  r dr.
\ee

 \begin{lemma}\label{exist1}
   There exists a unique smooth solution $\{A_j^{N, \si }\}_{j=1}^N$ to \eqref{app2} such that the approximate solution $\psi^{N, \si } (x_1, r)$ defined in \eqref{app1} satisfies
   \begin{equation}\label{H2ap5}
   \iint_D ( |\psi^{N, \si }|^2 + |\na \psi^{N, \si}|^2 + |\na^2 \psi^{N, \si}|^2 + \b|\f {\p_r \psi^{N, \si }}r \b|^2 ) r dr dx_1 \le C_* \iint_D (F_0^2 + |\na F_0|^2 ) r dr dx_1 ,
   \end{equation}
 where $C_*$ depends only on the $H^3_r (D )$ norms of $k_{11}$, $k_1$, the $H^2_r (D)$ norms of $\p_r k_{12}$, $\f {k_{12}}r$, $\p_r k_2$ and the $L^{\oo}_r (D)$ norm of  $k_{12}$, $\p_{x_1} k_{12}$ and $k_2$.

 \end{lemma}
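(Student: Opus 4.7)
My plan is to recast \eqref{app2} as a linear two-point boundary value problem, establish the a priori bound \eqref{H2ap5} using the multiplier arguments from the proof of Lemma \ref{H1estimate-ap}, and then deduce existence and uniqueness via the Fredholm alternative. Concretely, setting $Y = (A^{N,\sigma}, (A^{N,\sigma})', (A^{N,\sigma})'')^T \in \mathbb{R}^{3N}$ and dividing the leading term by $\sigma > 0$, the system \eqref{app2} becomes a linear first-order ODE $Y'(x_1) = M(x_1) Y(x_1) + G(x_1)$ on $[L_0, L_1]$ with a $3N$-dimensional boundary operator $\mathcal{B} Y := (A^{N,\sigma}(L_0), (A^{N,\sigma})''(L_0), (A^{N,\sigma})''(L_1)) = 0$ that splits between the two endpoints. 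This is a Fredholm problem of index zero.

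For the a priori bound, I would test \eqref{app2} against the same multipliers employed in the proof of Lemma \ref{H1estimate-ap}, namely $d(x_1)\p_{x_1}\psi^{N,\sigma}$, $\eta_1^2 \p_{x_1}^2 \psi^{N,\sigma}$, $\eta_2^2 d(x_1) \p_{x_1}^2 \psi^{N,\sigma}$, and $-d(x_1) \p_{x_1}(\p_r^2 + \f 1r \p_r)\psi^{N,\sigma}$, and integrate over $D$. The crucial observation is that because $\{b_j\}_{j=1}^N$ are orthonormal eigenfunctions of $-\p_r^2 - \f 1r \p_r$ with homogeneous Neumann data at $r = 0$ and $r = 1$, for each fixed $x_1$ the radial slices of $\p_{x_1}\psi^{N,\sigma}$, $\p_{x_1}^2\psi^{N,\sigma}$, and $\p_{x_1}(\p_r^2 + \f 1r \p_r)\psi^{N,\sigma} = -\sum_j \lambda_j (A_j^{N,\sigma})'(x_1) b_j(r)$ all lie in $V_N := \mathrm{span}\{b_1, \ldots, b_N\}$. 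Consequently, the Galerkin orthogonality $\int_0^1 (\mathcal{L}_\sigma \psi^{N,\sigma} - F_0) b_m\, r\, dr = 0$ upgrades to $\iint_D (\mathcal{L}_\sigma \psi^{N,\sigma} - F_0) Q\, r\, dr\, dx_1 = 0$ for every $Q \in L^2(L_0,L_1;V_N)$, and the integration-by-parts identities of Lemma \ref{H1estimate-ap} carry over verbatim. The prescribed boundary conditions $\psi^{N,\sigma}(L_0, \cdot) = 0$, $\p_{x_1}^2 \psi^{N,\sigma}(L_0, \cdot) = \p_{x_1}^2 \psi^{N,\sigma}(L_1, \cdot) = 0$, together with the automatic radial Neumann conditions $\p_r \psi^{N,\sigma}(\cdot, 0) = \p_r \psi^{N,\sigma}(\cdot, 1) = 0$ inherited from $b_j'(0) = b_j'(1) = 0$, ensure that every boundary term in the multiplier argument vanishes or carries the correct sign exactly as in Lemma \ref{H1estimate-ap}.

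Finally, applying \eqref{H2ap5} in the case $F_0 \equiv 0$ forces $\psi^{N,\sigma} \equiv 0$ and hence $A_j^{N,\sigma} \equiv 0$ for every $j$; the homogeneous BVP admits only the trivial solution. Because a linear two-point BVP for a finite-dimensional ODE system is Fredholm of index zero — the linear map from solutions of the homogeneous ODE to their boundary data acts between spaces of equal dimension $3N$ — injectivity forces surjectivity and yields a unique classical solution of \eqref{app2} for every right-hand side $F_{0m}$. Smoothness of each $A_j^{N,\sigma}$ on $[L_0, L_1]$ then follows by bootstrapping the ODE using the smoothness of the coefficients $a_{jm}, b_{jm}, c_{jm}$ and $F_{0m}$. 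The main obstacle I foresee is correctly coupling the spectral decomposition in $r$ with the multiplier estimates in $x_1$: the singular term $\f 1r \p_r$ must interact cleanly with each test function, which is precisely why the eigenfunction basis from Lemma \ref{eigen}, as opposed to an arbitrary orthonormal basis of $L^2_r([0, 1])$, is indispensable.
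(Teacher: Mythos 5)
Your proposal is correct and follows essentially the same route as the paper: the $H^1_r$ and $H^2_r$ bounds are obtained by running the multipliers of Lemma \ref{H1estimate-ap} through the Galerkin equations (the decisive point being, exactly as you note, that $-\p_{x_1}(\p_r^2+\f 1r\p_r)\psi^{N,\si}=\sum_j\la_j (A_j^{N,\si})'b_j$ stays in $\mathrm{span}\{b_1,\dots,b_N\}$ because the $b_j$ are eigenfunctions), and existence then follows from uniqueness for the $3N$-dimensional linear two-point BVP. The only cosmetic imprecision is that the third multiplier must be applied to the $x_1$-differentiated Galerkin system (the equation for $w_1^{N,\si}=\p_{x_1}\psi^{N,\si}$), not to \eqref{app2} itself, but your appeal to replicating Lemma \ref{H1estimate-ap} verbatim covers this.
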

 \begin{proof}
   Multiplying the $m^{th}$ equation in \eqref{app2} by $d(x_1) \f d{dx_1 } A_m^{N, \si } $, summing from 1 to $N$, and integrating over $[L_0, L_1]$ yield that
   \begin{equation*}
   \iint_D \mc{L}_{\si} \psi^{N, \si } d (x_1) \p_{x_1} \psi^{N, \si } r dr dx_1 = \iint_D F_0 d (x_1) \p_{x_1} \psi^{N, \si } r dr dx_1.
   \end{equation*}
   Using the integration by parts as in Lemma \ref{H1estimate-ap}, we obtain
   \begin{equation*}
  \si \iint_D |\p_{x_1}^2 \psi^{N, \si }|^2 r dr dx_1
   + \iint_D (|\psi^{N, \si }|^2 + |\na \psi^{N, \si }|^2 ) r dr dx_1 
    \le C \iint_D F_0^2 r dr dx_1.
   \end{equation*}
   This implies the solution to problem \eqref{app2} is unique. For the system of $N$ third-order equations endowed with $3N$ boundary conditions, the existence of the solution to \eqref{app2} can be derived from the uniqueness and the proof is same as \cite[Lemma 2.7]{WX23}, so is omitted.

   To get the $H^2_r(D)$ estimate of $\psi^{N,\si}$, selecting the cutoff functions $\eta_1$, $\eta_2$ as in Lemma \ref{H1estimate-ap}, multiplying the $m^{th }$ equation in \eqref{app2} by $\eta_1^2 \f {d^2}{d x_1^2} A_m^{N, \si } $, summing from 1 to $N$, integrating over $[L_0, L_1]$ and running the same argument as for \eqref{H2ap1} will yield
   \begin{equation}\label{H2ap2}
   \int_{L_0}^{\f {L_0}2} \int_0^1 (|\p_{x_1}^2 \psi^{N, \si }|^2 + |\p_{x_1 r}^2 \psi^{N, \si }|^2 )r dr dx_1 \le \iint_D F_0^2 r dr dx_1.
   \end{equation}
  Denote $w_1^{N, \si } = \p_{x_1} \psi^{N, \si } = \sum_{j =1}^N w_{1, j}^{N, \si } (x_1) b_j (r)$, where $w_{1, j}^{N, \si } (x_1) = \f d{dx_1} A_j^{N, \si }$. Taking $\f d {dx_1}$ on each equation in \eqref{app2}, then multiplying it by $\eta_2^2 d(x_1) \f {d}{dx_1} w_{1, j}^{N, \si } $, summing from 1 to $N$ and integrating over $[L_0, L_1]$. After some computations, combined with \eqref{H2ap2}, one gets
   \begin{equation}\no
   \iint_D (|\p_{x_1}^2 \psi^{N, \si}|^2 + |\p_{x_1 r}^2 \psi^{N, \si}|^2 ) r dr dx_1 \le C_* \iint_D (F_0^2 + |\p_{x_1} F_0|^2) r dr dx_1,
   \end{equation}
   with a uniform constant $C_*$ for $N$, $\si $. Finally, we will get the estimate of $\na \p_r \psi^{N, \si }$ and $\f 1r \p_r \psi^{N, \si }$. Note that $-b_m''(r)-\frac{1}{r}b_m'(r)=\lambda_m b_m(r)$, then
   \begin{equation}\no
   \sum_{m = 1}^N \la_m b_m (r) \f {d}{d x_1 } A_m^{N, \si } (x_1) = \sum_{m = 1}^N\f {d}{d x_1 } A_m^{N, \si } (x_1) (- b_m'' (r) - \f 1r b_m' (r)) = - \p_{x_1}\left(\p_r^2+\f 1r \p_r\right)\psi^{N, \si}.
   \end{equation}
   Thus, one may multiply the $m^{th }$ equation in \eqref{app2} by $\la_m d(x_1) \f {d}{d x_1 } A_m^{N, \si } $, sum from 1 to $N$, and integrate over $[L_0, L_1]$, where $\la_m $ is the eigenvalue associated with $b_m (r)$. Integrations by parts as in \eqref{ap-v1} yield
   \begin{equation}\label{H2ap7}
   \iint_D (|\p_{x_1 r}^2 \psi^{N,\si}|^2 + |\p_r^2 \psi^{N, \si}|^2 + |\f 1r \p_r \psi^{N, \si}|^2 ) r dr dx_1\le C_* \iint_D (F_0^2 + |\na F_0|^2) r dr dx_1,
   \end{equation}
    with a uniform constant $C_*$ for $N$, $\si $. Then \eqref{H2ap5} follows immediately.
 \end{proof}

 \begin{lemma}\label{exist2}
   The problem \eqref{linearized2} has a unique $H^2_r(D)$ strong solution $\psi (x_1, r)$ satisfying
   \begin{equation}\label{H2}
   \iint_D \b( |\psi |^2 + |\na \psi|^2 + |\na^2 \psi|^2 + \b|\f {\p_r \psi}r\b|^2 \b) r dr dx_1 \le C_* \iint_D (F_0^2 + |\na F_0|^2 ) r dr dx_1,
   \end{equation}
  where $C_*$ depends only on the $H^3_r (D )$ norms of $k_{11}$, $k_1$, the $H^2_r (D)$ norms of $\p_r k_{12}$, $\f {k_{12}}r$, $\p_r k_2$ and the $L^{\oo}_r (D)$ norm of $k_{12}$, $\p_{x_1} k_{12}$ and $k_2$.
 \end{lemma}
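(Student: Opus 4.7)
The plan is to obtain $\psi$ as a weak limit of the Galerkin approximations $\psi^{N,\sigma}$ from Lemma \ref{exist1}, taking first $N \to \infty$ and then $\sigma \to 0^+$, using the uniform $H_r^2(D)$ bounds already established and Lemma \ref{H1estimate} for uniqueness.

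First I would fix $\sigma > 0$. Lemma \ref{exist1} supplies a sequence $\{\psi^{N,\sigma}\}_{N\geq 1}$ bounded in $H_r^2(D)$ by $C_* \iint_D (F_0^2 + |\nabla F_0|^2)\, r\, dr\, dx_1$ uniformly in $N$ (and in $\sigma$). By weak compactness, extract a subsequence $\psi^{N,\sigma} \rightharpoonup \psi^\sigma$ in $H_r^2(D)$. Because the test functions of the Galerkin scheme are finite linear combinations of the orthonormal basis $\{b_j\}$ from Lemma \ref{eigen}, which is dense in $L_r^2([0,1])$, passing to the limit in the identity $\int_0^1 \mc{L}_\sigma \psi^{N,\sigma} b_m \, r\, dr = \int_0^1 F_0 b_m \, r\, dr$ for each fixed $m \leq N$ and then for any $\varphi \in L_r^2(D)$ shows that $\psi^\sigma$ is a strong solution of \eqref{ap1} in $H_r^2(D)$ intersected with $\{\sigma \partial_{x_1}^3 \psi^\sigma \in L_r^2\}$. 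The boundary conditions $\psi^\sigma(L_0,r) = 0$ and $\partial_r \psi^\sigma(x_1, 0) = \partial_r \psi^\sigma(x_1, 1) = 0$ are preserved by trace continuity under weak $H_r^2$ convergence; the auxiliary conditions $\partial_{x_1}^2 \psi^\sigma(L_0,r) = \partial_{x_1}^2 \psi^\sigma(L_1,r) = 0$ are transmitted from the ODE system \eqref{app2} by taking $N \to \infty$ in the pointwise sense in $r$ (for each $r$ in a dense set) and then using continuity.

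Next I would let $\sigma \to 0^+$. The estimates in Lemma \ref{exist1}, being uniform in $\sigma$, pass to $\psi^\sigma$ by weak lower semicontinuity, so $\|\psi^\sigma\|_{H_r^2(D)} + \sqrt{\sigma}\|\partial_{x_1}^2 \psi^\sigma\|_{L_r^2(D)} \le C_*$ uniformly. Extract another weakly convergent subsequence $\psi^\sigma \rightharpoonup \psi$ in $H_r^2(D)$. For any test function $\varphi \in C^\infty(\overline{D})$ with compact $x_1$-support in $(L_0, L_1)$, integration by parts gives
\begin{equation*}
\sigma \iint_D \partial_{x_1}^3 \psi^\sigma \, \varphi \, r\, dr\, dx_1 = -\sigma \iint_D \partial_{x_1}^2 \psi^\sigma \, \partial_{x_1}\varphi \, r\, dr\, dx_1,
\end{equation*}
and Cauchy-Schwarz together with the bound $\sqrt{\sigma}\|\partial_{x_1}^2 \psi^\sigma\|_{L_r^2} \le C_*$ forces the right hand side to tend to $0$ like $\sqrt{\sigma}$. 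All remaining terms in $\mc{L}_\sigma \psi^\sigma$ pass to the limit, either by weak convergence of $\nabla^2 \psi^\sigma, \nabla \psi^\sigma$ against the regular coefficients $k_{11}, k_{12}, k_1, k_2$, or by strong $L_r^2$ convergence of lower-order terms via Rellich compactness. Hence $\psi$ satisfies \eqref{linearized2} as an $H_r^2(D)$ strong solution, and the estimate \eqref{H2} follows from weak lower semicontinuity of the norm.

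Finally, for uniqueness, let $\psi^{(1)}, \psi^{(2)}$ be two $H_r^2(D)$ strong solutions and set $\tilde\psi = \psi^{(1)} - \psi^{(2)}$. Then $\tilde\psi$ solves \eqref{linearized2} with $F_0 \equiv 0$ and homogeneous boundary data, and an $H_r^2(D)$ strong solution has enough regularity to justify the multiplier argument of Lemma \ref{H1estimate} (the integrations by parts only require $H^2$ regularity and the prescribed boundary traces). The resulting inequality $\iint_D |\nabla \tilde\psi|^2 r\, dr\, dx_1 \le 0$ combined with $\tilde\psi(L_0, r) = 0$ gives $\tilde\psi \equiv 0$. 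The main obstacle I anticipate is the limit passage for $\sigma \to 0$ at the boundary $x_1 = L_1$: the artificial condition $\partial_{x_1}^2 \psi^\sigma(L_1, r) = 0$ must not generate a spurious trace of $\psi$ at $L_1$, and this is handled automatically because weak $H_r^2$ convergence controls only the traces of $\psi$ and $\nabla \psi$, not of $\nabla^2 \psi$, so no boundary condition at $L_1$ survives in \eqref{linearized2}, consistent with the Keldysh character of the limit equation there.
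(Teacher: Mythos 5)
Your proposal is correct and follows essentially the same route as the paper: Galerkin approximations, the uniform $H^2_r$ bounds of Lemma \ref{exist1}, a weak limit first in $N$ and then in $\sigma$ (with the dissipation term killed by the uniform bound on $\sqrt{\sigma}\,\|\partial_{x_1}^2\psi^\sigma\|_{L^2_r}$), and uniqueness via the multiplier argument of Lemma \ref{H1estimate}. The only questionable (but superfluous) step is your attempt to transmit $\partial_{x_1}^2\psi^{\sigma}(L_0,\cdot)=\partial_{x_1}^2\psi^{\sigma}(L_1,\cdot)=0$ to the limit --- an $H^2_r$ function has no trace of its second derivative, and the paper sidesteps this entirely by keeping the $\sigma$-term in the once-integrated-by-parts weak formulation tested against functions vanishing at $x_1=L_0$ and $x_1=L_1$.
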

 \begin{proof}
    It follows from \eqref{H2ap5} that $\| \psi^{N, \si } \|_{H^2_r (D )} $ are uniformly bounded with respect to $N,\sigma$. For a fixed $\sigma$, by the weak compactness of a bounded set in a Hilbert space, there exists a subsequence, still denoted by $\psi^{N, \si}$ for simplicity, which converges strongly in $H^1_r (D )$ and converges weakly in $H^2_r (D )$ to a limit $\psi^{\si} \in H^2_r (D )$ with a uniform $H^2_r(D)$ estimate with respect to $\si $,
   \begin{equation}\no
   \| \psi^{\si} \|_{H^2_r (D )} \le C_* \| F_0 \|_{H^1_r (D )}.
   \end{equation}

   Due to the strong convergence $\psi^{N, \si } \to \psi^{\si }$ as $N \to \oo$ in $H^1_r (D)$, $\psi^{\si }$ retains the boundary conditions
   \begin{equation*}
   \begin{cases}
     \psi^{\si } (L_0, r) =0,& \forall r \in [0, 1],   \\
     \p_r \psi^{\si }(x_1, 0 ) = \p_r \psi^{\si } (x_1, 1) = 0, & \forall x_1 \in [L_0, L_1].  \\
   \end{cases}
   \end{equation*}

   Next we show that $\psi^{\si }$ is a weak solution to the system \eqref{ap1}. Given any test function $\chi (x_1, r) = \sum_{m =1}^{N_0} \chi_m (x_1) b_m (r)$, where $\chi_m (x_1) \in C^{\oo} ([L_0, L_1])$ and $\chi_m(L_0)=\chi_m(L_1)=0$ for any $1\leq m\leq N$. Let $N \ge N_0$. Multiplying each equation in \eqref{app2} by $\chi_m $ ($\chi_m \equiv 0$ for any $N_0 + 1 \le j \le N$), summing from $m =1$ to $m =N$, and integrating with respect to $x_1$ from $L_0$ to $L_1$  give
   \be\no
   \iint_D \b(\si \p_{x_1}^3 \psi^{N, \si } + k_{11} \p_{x_1}^2 \psi^{N, \si } + 2 k_{12} \p_{x_1 r}^2 \psi^{N, \si } + \p_{r}^2 \psi^{N, \si } + \f 1r \p_r \psi^{N, \si}  \\\no
   + k_1 \p_{x_1} \psi^{N, \si } + k_2 \p_r \psi^{N, \si }\b) \chi r dr dx_1 = \iint_D F_0 \chi r dr dx_1 .
   \ee
   After integrating by parts and taking the limit of the above weak convergent subsequence of $\psi^{N, \si }$, one gets
   \be\label{wf2}
   && \iint_D ( - \si \p_{x_1 }^2 \psi^{\si } \p_{x_1} \chi
   - \p_{x_1} (k_{11} \chi ) \p_{x_1} \psi^{\si}
   - 2 \p_{x_1} (k_{12} \chi ) \p_r \psi^{\si } \\\no
   && \q\q - \p_r \psi^{\si } \p_r \chi + k_1 \p_{x_1} \psi^{\si } \chi + k_2 \p_r \psi^{\si } \chi ) r dr dx_1 = \iint_D F_0 \chi r dr dx_1 .
   \ee

   Using a density argument, the weak formulation \eqref{wf2} holds for any text function $\chi \in H^1_r (D )$ vanishing at $x_1 = L_0$ and $x_1 = L_1$. We now consider a sequence of approximate solutions $\psi^{\si}$ as $\si \to 0$. Thanks to \eqref{H2ap5}, the norm $\| \psi^{\si } \|_{H^2_r (D )}  $ is uniformly bounded with respect to $\si $. This further implies the existence of a weakly convergent subsequence labeled as $\{ \psi^{\si_j} \}_{j =1}^{\oo}$ with $\si_j \to 0$ as $j \to \oo$, which converges weakly to a limit $\psi \in H^2_r (D)$. Moreover, $\psi $ retains the boundary condition
 \begin{equation*}
 \begin{cases}
   \psi    (L_0, r) =0, & \forall r \in [0, 1],   \\
   \p_r \psi (x_1, 0 ) = \p_r \psi  (x_1, 1) = 0, & \forall x_1 \in [L_0, L_1].
 \end{cases}
 \end{equation*}

 From
 \eqref{wf2}, it is obvious that
 \begin{equation*}
 \iint_D (- \p_{x_1} \psi \p_{x_1} (k_{11} \chi ) - 2 \p_r \psi \p_{x_1} (k_{12} \chi ) - \p_r \psi \p_r \chi
 + k_1 \p_{x_1} \psi \chi + k_2 \p_r \psi \chi) r dr dx_1 = \iint_D F_0 \chi r dr dx_1,
 \end{equation*}
 holds for any $\chi \in H^1_r (D )$ vanishing at $x_1 = L_0$ and $x_1 = L_1$. Since $\psi \in H^2_r (D ) $, then $\psi $ is actually a strong solution to \eqref{linearized2} and the equation in \eqref{linearized2} holds almost everywhere.
 \end{proof}

\section{The $H^4_r$ energy estimates and the proof of Theorem \ref{irro}.}\label{h4estimate}\noindent

In this section, we establish higher order estimates for the solution of \eqref{linearized2} and complete the proof of Theorem \ref{irro}. Since $k_{11} (x_1, r)$ changes sign as the fluid moves across the sonic front, the equation \eqref{linearized2} is elliptic in subsonic region and changes types thereafter, we have to improve the regularity of $\psi$ in the subsonic region and the transonic region separately.

Since the equation \eqref{linearized2} is elliptic in $D_{\f 18} \co \{ (x_1, r): L_0< x_1 < \f {L_0}8, 0 < r < 1\} $, the $H^4_r$ estimate of $\psi $ on $D_{\f 18}$ can be obtained by elliptic theories. However, due to the term $\frac{1}{r}\p_r\psi$ in \eqref{linearized2}, one derives only $\partial^3_r \psi (x_1, 1) + \partial^2_r \psi (x_1, 1) = 0$, rather than $\partial^3_r \psi (x_1, 1) =0$. The symmetric extension technique used in \cite{WX23} can not be applied in this case. Furthermore, we need to deal with the possible singularity near the axis $r=0$. Thus we have to separate the region $D_{\f18}$ into two subregions: one is near the nozzle wall $r=0$, another one contains the axis $r = 0$. Different techniques are employed to derive the $H_r^4$ norm estimates in these two subregions. Near the nozzle wall, we use the special structure of \eqref{linearized2} so that the elliptic estimates derived in \cite{Grisvard11} for convex domains can be applied. Near the axis, we transform the potential function back to the Cartesian coordinates so that the singularities near the axis disappear and one can use the standard estimates of second order elliptic equation in \cite{gt}.

\begin{lemma}\label{H3}
Under the assumptions of Lemma \ref{H1estimate}, the $H_r^2$ strong solution to \eqref{linearized2} satisfies
  \begin{equation}\label{H31}
  \int_{L_0}^{\f {3 L_0}8} \int_0^1 (|\na^3 \psi |^2 + |\na^4 \psi |^2 + |\na (\f 1r {\p_r \psi})|^2 + |\na^2 (\f 1r {\p_r \psi})|^2 + |\f 1r \p_r (\f 1r \p_r \psi )|^2 ) r dr dx_1 \le C_* \| F_0 \|_{H^2_r (D )}^2,
  \end{equation}
 where $C_*$ depends only on the $H^3_r (D )$ norms of $k_{11}$, $k_1$, the $H^2_r (D)$ norms of $\p_r k_{12}$, $\f {k_{12}}r$, $\p_r k_2$, $\f {k_2}r$, the $L^{\oo}_r (D)$ norm of $k_{12}$, $\p_{x_1} k_{12}$ and $k_2$, the $L^2_r (D)$ norm of $\p_{x_1}^2 k_2$ and the $L^4_r (D)$ norm of $\p_{x_1}^2 k_{12}$.
\end{lemma}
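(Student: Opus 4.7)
The plan is to establish \eqref{H31} by localizing inside the uniformly elliptic subregion and splitting $D_{3/8}$ into two overlapping pieces handled by different techniques. In $D_{1/8} = [L_0,L_0/8]\times[0,1]$ we have $k_{11}(x_1,r) \ge \kappa_*/2 > 0$ by Lemma \ref{bkg-coe} together with \eqref{coe11} and the smallness of $\eps+\delta_0$; since $\|k_{12}\|_{L^\infty_r}$ is also small, the principal symbol $k_{11}\xi_1^2 + 2k_{12}\xi_1\xi_2 + \xi_2^2$ is uniformly positive definite on $D_{1/8}$. I would then pick a cutoff $\chi(x_1)\in C_c^\infty((-\infty,L_0/8))$ with $\chi\equiv 1$ on $[L_0,3L_0/8]$ and work with $\chi\psi$, so that all commutators are supported in $[3L_0/8,L_0/8]$ and may be absorbed by the $H^2_r(D)$ estimate of Lemma \ref{exist2}. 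To deal separately with the Neumann wall at $r=1$ (where $\p_r^3\psi|_{r=1}\neq 0$ in general because of the $\frac{1}{r}\p_r\psi$ term) and the artificial singularity at $r=0$, I would cover the localized strip by two subdomains $D_{\rm wall}=D_{1/8}\cap\{r>1/2\}$ and $D_{\rm axis}=D_{1/8}\cap\{r<2/3\}$.

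On $D_{\rm wall}$ the weight $r$ is bounded away from $0$ and the weighted norms $H^m_r$ reduce to the ordinary Sobolev norms on a rectangle. The function $w:=\p_r\psi$ solves a uniformly elliptic second order equation obtained by differentiating \eqref{linearized2} in $r$, and inherits \emph{homogeneous Dirichlet data} on the three sides $\{r=1/2\text{ (by cutoff)},r=1,x_1=L_0\}$, the last because $\psi(L_0,\cdot)\equiv 0\Rightarrow\p_r\psi(L_0,\cdot)\equiv 0$. Grisvard's $H^2$ regularity theorem for the Dirichlet problem on a convex polygon \cite[Theorem 3.1.3.1]{Grisvard11} then yields $w\in H^2$, hence $\psi\in H^3$, near $r=1$. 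For the $H^4$ estimate the crucial observation is that
\[
\hat w_2:=\p_{x_1}^2\psi-\tfrac{1}{k_{11}}\bigl(F_0-k_1\p_{x_1}\psi\bigr)=-\tfrac{1}{k_{11}}\bigl(2k_{12}\p_{x_1 r}^2\psi+\p_r^2\psi+\tfrac{1}{r}\p_r\psi+k_2\p_r\psi\bigr)
\]
satisfies homogeneous Neumann conditions at $r=0,1$ thanks to \eqref{coe15}--\eqref{coe16} and \eqref{coe18}, and vanishes at $x_1=L_0$ because $\psi(L_0,\cdot)=0$ and the compatibility of $F_0$ (traceable back to \eqref{cp2}--\eqref{bch}). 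This permits an even extension of $\hat w_2$ across $r=1$ on which a standard elliptic $H^2$ estimate applies; bootstrapping back to $\psi$ then gives the required bound on $\nabla^4\psi$ and on $\nabla^2\bigl(\tfrac{1}{r}\p_r\psi\bigr)$ in $D_{\rm wall}$.

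On $D_{\rm axis}$ I would undo the axisymmetric reduction by setting $\check\psi(x_1,x')=\psi(x_1,|x'|)$ on $\check D_{\rm axis}=\{(x_1,x')\in\mathbb R\times\mathbb R^2:L_0<x_1<L_0/8,\,|x'|<2/3\}$. For any axisymmetric function, $\p_r^2\psi+\tfrac{1}{r}\p_r\psi=\Delta_{x'}\check\psi$, so \eqref{linearized2} becomes a genuine uniformly elliptic second order equation in three variables with no singular lower order terms; its coefficients, obtained from $k_{11},k_{12}/r,\,k_2/r$ etc., lie in $H^2$ of $\check D_{\rm axis}$ by Lemma \ref{coe-estimate}, and the right hand side lies in $H^2$ as well. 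The interior $H^4$ estimates of \cite{gt} therefore apply and yield control of $\check\psi$ in $H^4(\check D_{\rm axis}')$ for a slightly smaller $\check D_{\rm axis}'$ covering the axis. Reading this bound in cylindrical variables, and using that for axisymmetric $\check\psi$ one has $\tfrac{1}{r}\p_r\psi=\tfrac{\p_{x_2}\check\psi}{x_2}$ which is as smooth as $\p_{x_2}^2\check\psi$, recovers precisely the weighted terms $\|\nabla^k(\tfrac{1}{r}\p_r\psi)\|_{L^2_r}$ $(k=0,1,2)$ and $\|\tfrac{1}{r}\p_r(\tfrac{1}{r}\p_r\psi)\|_{L^2_r}$ appearing in \eqref{H31}.

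Combining the two local estimates via a partition of unity subordinate to $D_{\rm wall}\cup D_{\rm axis}$ and absorbing the commutator errors by the $H^2_r$ bound of Lemma \ref{exist2} completes the argument. The main obstacle I anticipate is the $H^4$ step on $D_{\rm wall}$: one must verify carefully that every boundary trace of $\hat w_2$ vanishes (so that symmetric extension preserves $H^2$), which requires assembling the compatibility conditions \eqref{coe15}--\eqref{coe18} on the coefficients together with the boundary structure of $F_0$ inherited from \eqref{cp2}--\eqref{bch}, and ensuring that even extensions of $k_{11},k_1,k_{12}$ across $r=1$ yield extended coefficients of sufficient regularity so that the post-extension equation still admits a standard elliptic $H^2$ bound.
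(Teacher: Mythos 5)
Your proposal follows essentially the same route as the paper's proof: the wall/axis splitting of the elliptic subregion, the Dirichlet problem for $v_1=\p_r\psi$ handled by Grisvard's convex-domain theorem for the $H^3_r$ step, the reformulation via $\hat w_2=\p_{x_1}^2\psi-\frac{1}{k_{11}}(F_0-k_1\p_{x_1}\psi)$ with even extension across $r=1$ for the $H^4_r$ step, and the return to Cartesian coordinates with interior estimates from \cite{gt} near the axis. The one point you flag as needing care (the vanishing traces of $\hat w_2$) is resolved in the paper exactly as you anticipate, by restricting to $r\in[1-\beta_0,1]$ where $k_{12}(L_0,\cdot)=0$ via \eqref{coe17} and \eqref{bch}.
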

\begin{proof}

Define $D_{\f 38} \co [L_0, \f 38 L_0] \times [0,1]$. The proof will be divided into several steps.

{\bf Step 1.} The $H^4_r $ estimate near the nozzle wall $r = 1$.
Set $v_1 = \p_r \psi $, then
\begin{equation*} \begin{cases}
   k_{11} \p_{x_1 }^2 v_1 + 2 k_{12} \p_{x_1 r}^2 v_1 + \p_{r}^2 v_1 + \f 1r \p_r v_1 - \f 1{r^2} v_1 + k_5 \p_{x_1} v_1  + k_2 \p_r v_1 + k_6 v_1
= F_2,  \\
  v_1 (x_1, 0) = v_1 (x_1, 1) = 0, \q \forall x_1 \in [L_0, L_1], \\
   v_1 (L_0, r) 
   = 0, \q \forall r \in [0, 1],
\end{cases}
\end{equation*}
where
\begin{equation*}
k_5 =  k_1 + 2 \p_r k_{12}, \q
k_6 = \p_r k_2, \q
F_2 = \p_r F_0 - \p_r k_{11} \p_{x_1}^2 \psi
- \p_r k_1 \p_{x_1} \psi.
\end{equation*}

{\bf Step 1.1.} The $H^3_r $ estimate of $\psi $ near the nozzle wall $r = 1$.

Choose a cut-off function $\xi_1 (x_1, r) \in C^{\oo} ([L_0, L_1] \times [0, 1])$ such that $0 \le \xi_1  (x_1, r) \le 1$ for all $(x_1 , r) \in [L_0, L_1] \times [0, 1]$ and
\begin{equation*}
\xi_1  (x_1, r) = \begin{cases}
              1, & \mbox{on } (x_1, r) \in (L_0, \f 3{16} L_0 ) \times (\f 14, 1), \\
              0, & \mbox{on } (x_1, r ) \notin (L_0, \f 18 L_0) \times (\f 18, 1).
            \end{cases}
\end{equation*}
Let $\tilde{v}_1 = \xi_1  v_1 $, then $\tilde{v}_1$ satisfies
\begin{equation*}
\begin{cases}
  \p_{x_1} (k_{11} \p_{x_1} \tilde{v}_1 + k_{12} \p_r \tilde{v}_1) + \p_r (k_{12} \p_{x_1} \tilde{v}_1 + \p_r \tilde{v}_1) = g_1, & \\
  \tilde{v}_1 (L_0, r) = \tilde{v}_1 (\f 18 L_0, r) = 0, & \forall r \in [0, 1], \\
  \tilde{v}_1 (x_1, \f 18) = \tilde{v}_1 (x_1, 1) = 0, & \forall x_1 \in [L_0, \f 18 L_0],
\end{cases}
\end{equation*}
where
\be\no
&& g_1 =  \xi_1  (\p_r F_0 - \p_r k_{11} \p_{x_1}^2 \psi - \p_r k_1 \p_{x_1} \psi - \f 1r \p_r v_1 + \f 1{r^2} v_1 - k_1 \p_{x_1} v_1 - \p_r k_{12} \p_{x_1} v_1  - k_2 \p_r v_1 \\\no
&&\quad \quad   - \p_r k_2 v_1 + \p_{x_1} k_{11} \p_{x_1} v_1 + \p_{x_1} k_{12} \p_r v_1) + v_1 ( \p_{x_1} (k_{11} \p_{x_1} \xi_1 + k_{12} \p_r \xi_1) + \p_r (k_{12} \p_{x_1} \xi_1 + \p_r \xi_1) ) \\\no
&&\quad\quad
+ 2  \p_{x_1} \xi_1 ( k_{11} \p_{x_1} v_1 +  k_{12}  \p_r v_1) + 2 \p_r \xi_1 ( k_{12}   \p_{x_1} v_1 + \p_r v_1 )  .
\ee
According to \cite[Theorem 3.1.3.1 and 3.2.1.2]{Grisvard11}, there holds that
\begin{equation*}
 \| {v}_1 \|_{H^2_r (D_{3,4})} \le \| \tilde{v}_1 \|_{H^2_r (D_{1,1})} \le c_* \| g_1 \|_{L^2_r (D_{1,1})},
\end{equation*}
where $D_{3,4} \co (L_0, \f 3{16} L_0) \times (\f 14, 1)$, $D_{1, 1} \co (L_0, \f 18 L_0) \times (\f 18, 1)$ and $c_*$ depends only on the diameter of $D_{1,1}$ and the $C^{0, 1}$ norm of $k_{ij}$ on $D_{1,1}$, thus $c_*$ depends only on $L_0$ and the $H^3_r (D_{1,1})$ norms of $k_{11}$, the $H^2_r (D_{1,1})$ norms of $\p_r k_{12}$ and the $L^{\oo}_r (D_{1,1})$ norm of $\p_{x_1} k_{12}$ .

It follows from Lemma \ref{coe-estimate} and \eqref{H2} that
\be\no
&& \| g_1 \|_{L^2_r (D_{1,1})}  \le C_* \b(
\| \p_r F_0 \|_{L^2_r (D_{1,1})}
+ \|  \p_r k_{11} \|_{L^{\oo}_r (D_{1,1})} \| \p_{x_1}^2 \psi \|_{L^2_r (D_{1,1})} + \| \p_r  k_1  \|_{L^{\oo}_r (D_{1,1})}  \| \p_{x_1} \psi \|_{L^2_r (D_{1,1})} \\\no
&& \q
+ \| \p_{x_1 r}^2 \psi \|_{L^2_r (D_{1,1})}  (
 \| \p_{x_1} k_{11}  \|_{L^{\oo}_r (D_{1,1})}
 + \| k_1  \|_{L^{\oo}_r (D_{1,1})}
+ \| k_{12} \|_{L^{\oo}_r (D_{1,1})}
+ \| \p_r k_{12} \|_{L^{\oo}_r (D_{1,1})})
 \\\no
&& \q
 + \| \p_{r}^2 \psi \|_{L^2_r (D_{1,1})} ( \| k_2 \|_{L^{\oo}_r (D_{1,1})} + \| k_{12} \|_{L^{\oo}_r (D_{1,1})} + \| \p_{x_1} k_{12} \|_{L^{\oo}_r (D_{1,1})} + \| \p_r k_2 \|_{L^{\oo}_r (D_{1,1})} ) \\\no
 && \q
 + \| \p_r \psi  \|_{L^2_r (D_{1,1})} (  \|  \p_{x_1} k_{11} \|_{L^{\oo}_r (D_{1,1})}
 +  \| k_{11}  \|_{L^{\oo}_r (D_{1,1})}
+ \| \p_r k_2 \|_{L^{\oo}_r (D_{1,1})} + \| k_{12} \|_{L^{\oo}_r (D_{1,1})}
 \\\no
 && \q
  + \| \p_{x_1} k_{12} \|_{L^{\oo}_r (D_{1,1})}   + \| \p_r k_{12} \|_{L^{\oo}_r (D_{1,1})}   )
+ \| \psi \|_{H^2_r (D_{1,1})} \b)
  \le  C_*  \| F_0 \|_{H^1_r (D)} .
\ee
Therefore,
\begin{equation*}
 \| \p_r \psi  \|_{H^2_r (D_{3,4})} \le C_*  \| F_0 \|_{H^1_r (D)},
\end{equation*}
and $\na (\f {\p_r \psi } r) \in L^2_r (D_{3,4})$. Note that
\begin{equation*}
\p_{x_1}^2 \psi =  \f 1{k_{11}} (F_0 - 2 k_{12} \p_{x_1 r}^2 \psi - \p_{r}^2 \psi - \f 1r \p_r \psi - k_1 \p_{x_1} \psi - k_2 \p_r \psi ),
\end{equation*}
one has
\begin{equation*}
\|\psi\|_{H^3_r (D_{3,4})} \le C_*  \| F_0 \|_{H^1_r (D)}.
\end{equation*}


{\bf Step 1.2.} The $H^4_r $ estimate of $\psi $ near the nozzle wall. A key observation here is the following. Set $w_2 = \p_{x_1}^2 \psi $. Since $k_{12} (L_0, r) = 0$ for any $r \in [1 - \beta_0, 1]$, $\p_r \psi (L_0, r) = \p_{r}^2 \psi (L_0, r) = 0$ for any $r\in [0,1]$, then
\begin{equation*}
k_{11} (L_0, r) \p_{x_1}^2 \psi (L_0, r) + k_1 (L_0, r) \p_{x_1} \psi (L_0, r) = F_0 (L_0, r), \ \forall r \in[1 - \beta_0, 1],
\end{equation*}
and
\begin{equation*}
\begin{cases}
  w_2 (L_0, r) = \f 1{k_{11} (L_0, r)} (F_0 (L_0, r) - k_1 (L_0, r) \p_{x_1} \psi (L_0, r)), \q  \forall r \in[1 - \beta_0, 1], \\
   \p_r w_2 (x_1, 1) = 0, \q x_1 \in [L_0, \f 3{16} L_0].
\end{cases}
\end{equation*}

Set $\hat{w}_2 (x_1, r) = w_2 (x_1, r) - \f 1 {k_{11} (x_1, r)} (F_0 (x_1, r) - k_1 (x_1, r) \p_{x_1} \psi (x_1, r))$, then $\hat{w}_2$ satisfies a homogeneous mixed boundary conditions on the entrance and the nozzle wall:
\begin{equation}\label{w21}
\begin{cases}
\p_{x_1} (k_{11} \p_{x_1} \hat{w}_2 + k_{12} \p_r \hat{w}_2) + \p_r (k_{12} \p_{x_1} \hat{w}_2 + \p_r \hat{w}_2) = \hat{g}_2,  & (x_1, r) \in  (L_0,\frac{3}{16}L_0)\times (1 - \beta_0,1),  \\
  \hat{w}_2 (L_0, r) = 0, & \forall r \in [1 - \beta_0, 1], \\
  \p_r \hat{w}_2 (x_1, 1) = 0, & \forall x_1 \in [L_0, \f 3{16} L_0],
\end{cases}
\end{equation}
where
\be\no
&& \hat{g}_2 = \p_{x_1}^2 F_0 - \p_{x_1}^2 k_{11} w_2 - \p_{x_1} k_{11} \p_{x_1} w_2 - 3 \p_{x_1} k_{12} \p_r w_2 - \f1r \p_r w_2 - 2 \p_{x_1} k_1 w_2\\\no
&& \q\q
 - k_1 \p_{x_1} w_2
- k_2 \p_r w_2 - 2 \p_{x_1}^2 k_{12} \p_{x_1 r}^2 \psi - \p_{x_1}^2 k_1 \p_{x_1} \psi - \p_{x_1}^2 k_2 \p_r \psi - 2 \p_{x_1} k_2 \p_{x_1 r}^2 \psi \\\no
&& \q\q
- \p_{x_1} \b[
k_{11} \p_{x_1} (\f 1{k_{11}} (F_0 - k_1 \p_{x_1} \psi )) + k_{12} \p_r (\f 1{k_{11}} (F_0 - k_1 \p_{x_1} \psi )) \b] \\\no
&& \q\q
- \p_r \b[
k_{12} \p_{x_1} (\f 1{k_{11}} (F_0 - k_1 \p_{x_1} \psi )) + \p_r (\f 1{k_{11}} (F_0 - k_1 \p_{x_1} \psi ))\b],
\ee
and $\hat{g}_2$ satisfies
\be\no
&& \| \hat{g}_2 \|_{ L^2_r (D_{3, 1})} \le \| \p_{x_1}^2 F_0 \|_{L^2_r (D_{3, 1})} + \| \p_{x_1}^2 k_{11}\|_{L^4_r (D)} \| w_2 \|_{L^4_r (D_{3, 1})} + \| \p_{x_1} k_{11}  \|_{L^{\oo}_r (D)} \| \p_{x_1} w_2\|_{L^2_r (D_{3, 1})}
 \\\no
&& \q
+ \| \p_{x_1}^2 k_{12} \|_{L^4_r (D)} \| \p_{x_1 r}^2 \psi \|_{L^4_r (D_{3,1})} + \| \p_{x_1} k_{12} \|_{L^{\oo}_r (D)} \| \p_r w_2 \|_{L^2_r (D_{3,1})} + \| \f 1r \p_r w_2 \|_{L^2_r (D_{3,1})}  \\\no
&& \q
+ \| \p_{x_1}^2 k_1\|_{L^2_r (D)} \| \p_r \psi \|_{L^{\oo}_r (D_{3,1})} + \| \p_{x_1} k_1  \|_{L^{\oo}_r (D)} \|w_2 \|_{L^2_r (D_{3,1})} + \| k_1 \|_{L^{\oo}_r (D)} \| \p_{x_1} w_2 \|_{L^2_r (D_{3,1})}\\\no
&& \q
  + \| \p_{x_1}^2 k_2 \|_{L^2_r (D)} \| \p_r \psi \|_{L^{\oo}_r (D_{3,1})}
+ \| \p_{x_1} k_2 \|_{L^{\oo}_r (D)} \| \p_{x_1r}^2 \psi \|_{L^2_r (D_{3,1})} + \| k_2 \|_{L^{\oo}_r (D)} \| \p_r w_2 \|_{L^2_r (D_{3,1})} \\\no
&& \q
 + ( \| k_{11} \|_{L^{\oo}_r (D)} + \| k_{12} \|_{L^{\oo}_r (D)}  )
\| \na^2 (\f 1{k_{11}} (F_0 - k_1 \p_{x_1} \psi)) \|_{L^2_r (D_{3,1})} + (\| \p_{x_1} k_{11} \|_{L^{\oo}_r (D)} \\\no
&& \q  + \| \p_{x_1} k_{12} \|_{L^{\oo}_r (D)} + \| \p_r k_{12} \|_{L^{\oo}_r (D)})
\| \na (\f 1{k_{11}} (F_0 - k_1 \p_{x_1} \psi)) \|_{L^2_r (D_{3,1})}
  \le C_* \| F_0 \|_{H^2_r (D)},
\ee
with $D_{3, 1} = (L_0, \f 3{16}L_0) \times (1 - \beta_0, 1)$.

Extend $\hat{w}_2$, $\hat{g}_2$, $k_{11}$ and $k_{12}$ from $(L_0, \f 3{16}L_0) \times (1 - \beta_0, 1)$ to $(L_0, \f 3{16}L_0) \times (1 - \beta_0, 1 + \beta_0)$ as follows
\begin{equation*}
\tilde{w} (x_1, r) = \begin{cases}
                              \hat{w}_2 (x_1, r), & 1 - \beta_0 < r < 1 \\
                              \hat{w}_2 (x_1, 2 - r), & 1 < r < 1 + \beta_0
                            \end{cases} , \q
(\tilde{k}_{11}, \tilde{g}_2) (x_1, r) = \begin{cases}
                                            (k_{11}, \hat{g}_2) (x_1, r), & 1 - \beta_0 < r < 1 \\
                                            (k_{11}, \hat{g}_2) (x_1, 2 - r), & 1 < r < 1 + \beta_0
                                          \end{cases},
\end{equation*}
and
\begin{equation*}
\tilde{k}_{12} (x_1, r) = \begin{cases}
                                 k_{12} (x_1, r), & 1 - \beta_0 < r < 1 \\
                                 - k_{12} (x_1, 2 - r), & 1 < r < 1 + \beta_0
                               \end{cases}.
\end{equation*}
Choose a cut-off function $\xi_2 (x_1, r) \in C^{\oo} ([L_0, \f 3{16} L_0] \times [1 - \beta_0, 1 + \beta_0])$ such that $0 \le \xi_2(x_1, r) \le 1$ for all $(x_1, r) \in [L_0, \f 3{16} L_0] \times [1 - \beta_0, 1 + \beta_0]$ and
\begin{equation*}
\xi_2 (x_1, r) = \begin{cases}
                   1, & \mbox{on } (x_1, r) \in (L_0, \f 14 L_0) \times (1 - \f 12 \beta_0, 1 + \f 12 \beta_0) \\
                   0, & \mbox{on } (x_1, r) \notin (L_0, \f 7{32} L_0) \times (1 - \f 34 \beta_0, 1 + \f 34 \beta_0)
                 \end{cases}.
\end{equation*}
Let $w  = \xi_2 \tilde{w}$, then $w$ satisfies
\begin{equation}\label{w22}
\begin{cases}
  \p_{x_1} (\tilde{k}_{11} \p_{x_1} w + \tilde{k}_{12} \p_r w) + \p_r (\tilde{k}_{12} \p_{x_1} w + \p_r w) = g_2, & \\
  w (L_0, r) = w (\f 3{16} L_0, r) = 0, &   r \in [1 - \beta_0, 1+ \beta_0 ], \\
 w (x_1, 1 - \beta_0) =  w (x_1, 1 + \beta_0) = 0, & x_1 \in [L_0, \f 3{16} L_0],
\end{cases}
\end{equation}
where
\be\no
g_2 = \xi_2 \tilde{g}_2 + \tilde{w} [\p_{x_1} (\tilde{k}_{11} \p_{x_1} \xi_2 + \tilde{k}_{12} \p_r \xi_2) + \p_r (\tilde{k}_{12} \p_{x_1} \xi_2 + \p_r \xi_2) ] \\\no
+ 2 \p_{x_1} \xi_2 (\tilde{k}_{11} \p_{x_1} \tilde{w} + \tilde{k}_{12} \p_r \tilde{w} )
+ 2 \p_r \xi_2 (\tilde{k}_{12} \p_{x_1} \tilde{w} + \p_r \tilde{w}).
\ee
Again, it follows from \cite[Theorem 3.1.3.1 and 3.2.1.2]{Grisvard11} that
\begin{equation}\label{w2h21}
\| \tilde{w}  \|_{H^2_r (D_{4,2})} \le \| w \|_{H^2_r (D_{3,2})} \le c_* \| g_2 \|_{L^2_r (D_{3,2})},
\end{equation}
where $D_{4,2} \co (L_0, \f 14 L_0) \times (1 - \f 12 \beta_0, 1 + \f 12 \beta_0)$, $D_{3,2} \co (L_0, \f 3{16} L_0) \times (1 - \beta_0, 1 + \beta_0)$, and $c_*$ only depends on the $C^{0, 1}$ norms of $k_{ij}$, so only depends on the $H^3_r (D)$ norm of $k_{11}$, the $H^2_r (D)$ norm of $\p_r k_{12}$ and the $L^{\oo}_r (D)$ norm of $\p_{x_1} k_{12}$. Moreover,
\be\no
&&  \| g_2 \|_{L^2_r (D_{3,2})} \le C_* \{ \| \tilde{g}_2 \|_{L^2_r (D_{3,2})}
+ \| \tilde{w} \|_{L^2_r (D_{3,2})} (\| \p_{x_1} \tilde{k}_{11} \|_{L^{\oo}_r (D)} + \| \tilde{k}_{11} \|_{L^{\oo}_r (D)} + \| \na \tilde{k}_{12} \|_{L^{\oo}_r (D)} + \| \tilde{k}_{12} \|_{L^{\oo}_r (D)} )
  \\\no
&& \q
 +   ( \| \tilde{k}_{11} \|_{L^{\oo}_r (D)} + \| \tilde{k}_{12} \|_{L^{\oo}_r (D)}) \| \p_{x_1} \tilde{w} \|_{L^2_r (D_{3,2})} +   ( \| \tilde{k}_{12} \|_{L^{\oo}_r (D)} + 1) \| \p_r \tilde{w} \|_{L^2_r (D_{3,2})} \}
 \le C_* \|  F_0 \|_{H^2_r (D)} .
\ee
Then we can obtain that
\begin{equation*}
\| w_2 \|_{H^2_r ([L_0, \f 14 L_0] \times [1 - \f 12 \beta_0, 1])} \leq C_* \|  F_0 \|_{H^2_r (D)}.
\end{equation*}
Due to
\begin{equation*}
\p_{r}^2 \psi  = F_0 - k_{11} \p_{x_1}^2 \psi - 2 k_{12} \p_{x_1 r}^2 \psi - k_1 \p_{x_1} \psi - k_2 \p_r \psi - \f 1r \p_r \psi,
\end{equation*}
it can be demonstrated that $\na^4 \psi \in L^2_r ([L_0, \f 14 L_0] \times [1 - \f 12 \beta_0, 1])$, $\na^2 ( \f {\p_r \psi }r )$, $\f 1r \p_r (\f 1r \p_r \psi ) \in L^2_r ([L_0, \f 14 L_0] \times [1 - \f 12 \beta_0, 1])$ and
\begin{equation}\label{w2h23}
\|\psi\|_{H^4_r([L_0, \f 14 L_0] \times [1 - \f 12 \beta_0, 1])}\leq C_* \|  F_0 \|_{H^2_r (D)}.
\end{equation}

{\bf Step 2.} The $H^4_r $ estimate near the axis $r = 0$. To eliminate the singularity of $\f 1r \p_r \psi$ at $r = 0$, we transform the function $\psi (x_1, r)$ back to the Cartesian coordinates and define $\check{\psi} (x_1, x') = \psi (x_1, |x'|)$. It follows from \eqref{linearized2} that
\begin{equation*}
\begin{cases}
b_{11} \p_{x_1}^2 \check{\psi }  + 2 b_{12} \sum_{i =2}^3 x_i \p_{x_1 x_i}^2 \check{\psi } + \sum_{i =2}^3  \p_{x_i }^2 \check{\psi }  + b_1 \p_{x_1} \check{\psi } + b_2 \sum_{i =2}^3 x_i \p_{x_i} \check{\psi } = f_0 ,  \\
  \check{\psi } (L_0, x') = 0,
\end{cases}
\end{equation*}
where
\be\no
& & b_{11} (x_1, x') = k_{11} (x_1, |x'|),  \q  b_{12} (x_1, x') = \f  { k_{12} (x_1, |x'|)}{|x'|}, \\\no
& & b_1 (x_1, x') = k_1 (x_1, |x'|), \q  b_2 (x_1, x') = \f{k_2 (x_1, |x'|)}{|x'|},  \q f_0 (x_1, x') = F_0 (x_1, |x'|).
\ee
It follows from Lemma \ref{coe-estimate} that $b_{11}, b_1  \in H^3 (\Om ) \hookrightarrow C^{1, \f 12} (\ol{\Om })$, $b_{12}, b_2 \in H^2 (\Om ) \hookrightarrow C^{0, \f 12} (\ol{\Om})$ and
\be\no
&&\|b_{11}-\bar{k}_{11}\|_{H^3(\Omega)}\leq \|k_{11}-\bar{k}_{11}\|_{H^3_r(D)},\ \ \|b_{1}-\bar{k}_1\|_{H^3(\Omega)}\leq\|k_{1}-\bar{k}_{1}\|_{H^3_r(D)},\\\no
&&\|b_{12}\|_{H^2(\Omega)}\leq \|\frac{k_{12}}{r}\|_{H^2_r(D)},\ \ \|b_{2}\|_{H^2(\Omega)}\leq \|\frac{k_{2}}{r}\|_{H^2_r(D)}, \\\no
&&\| \p_{x_2} (b_{12} x_3) \|_{L^{\infty} (\Om )}=\|\frac{x_2x_3}{r^2}(\p_r k_{12}-\frac{k_{12}}{r})\|_{L^{\infty}_r(D)}\le \|\frac{k_{12}}{r}\|_{H^2_r(D)} + \| \p_r k_{12} \|_{H^2_r (D)}, \  \\\no
&&\| \p_{x_2} (b_2 x_3) \|_{L^{\infty}(\Om)}=\|\frac{x_2x_3}{r^2}(\p_r k_{2}-\frac{k_{2}}{r})\|_{L^{\infty}_r(D)} \le \|\frac{k_2}{r}\|_{H^2_r(D)} + \| \p_r k_2 \|_{H^2_r (D)},\\\no
&&\|x_3\p_{x_2}^2 b_{12}\|_{L^4(\Omega)}=\|\frac{x_2^2x_3}{r^3}(\p_r^2 k_{12}-2\p_r(\frac{k_{12}}{r}))+\p_r(\frac{k_{12}}{r})(1-\frac{x_2^2}{r^2})\frac{x_3}{r}\|_{L^4(\Omega)}\\\no
&&\leq C_*(\|\p_r k_{12}\|_{H^2_r(D)}+ \|\frac{k_{12}}{r}\|_{H_r^2(D)}.
\ee

Define $\check{v}_2 = \p_{x_2} \check{\psi}$, then $\check{v }_2 (x_1, x')$ solves
\begin{equation*}
\begin{cases}
b_{11} \p_{x_1}^2 \check{v}_2 + 2 b_{12} \sum_{i = 2}^3 x_i \p_{x_1 x_i}^2 \check{v}_2  + \sum_{j =2}^3 \p_{x_j}^2 \check{v}_2  + b_3 \p_{x_1} \check{v}_2 + b_2 \sum_{i = 2}^3 x_i \p_{x_i} \check{v}_2 = f_1, \ \ \ \text{in }\Omega_1,\\
\check{v}_2 (L_0, x') = 0,\ \ \ \ \ \forall x'\in \{x'\in \mathbb{R}^2: |x'|<1\},
\end{cases}
\end{equation*}
where
\be\no
&&\Omega_1= \{(x_1,x'): L_0<x_1<\frac{1}{8}L_0, |x'|<1\},  \q b_3 =  b_1 +  2 \p_{x_2} ( b_{12} x_2) , \\\no
&& f_1 = \p_{x_2} f_0 - \p_{x_2 } b_{11} \p_{x_1}^2 \check{\psi } - 2 \p_{x_2} b_{12} x_3 \p_{x_1 x_3}^2 \check{\psi } - \p_{x_2} b_1 \p_{x_1} \check{\psi } - \p_{x_2} b_2  x_3 \p_{x_3} \check{\psi } - \p_{x_2} (b_2 x_2) \check{v}_2.
\ee
It follows from \cite[Theorem 9.11 and 9.13]{gt} and \eqref{H2} that for $\Om_1\supset\supset\Omega_2:=\{(x_1,x'): L_0<x_1<\frac{3}{16}L_0, |x'|\leq 1-\frac{1}{4}\beta_0\}$
\be\no
&& \iiint_{\Om_2} |\na^2 \p_{x_2} \check{\psi }|^2 dx' dx_1  \le  c_* \b(
\iiint_{\Om_1 } |\p_{x_2} \check{\psi} (x_1, x')|^2 dx' dx_1
+ \iiint_{\Om_1 } |f_1 (x_1, x')|^2 dx' dx_1
\b) \\\no
&& \le  c_* ( \| \check{\psi} \|_{H^1 (\Om_1)}^2 +  \| F_0 \|_{H^1 (\Om )}^2 +  \| \check{\psi} \|_{H^2 (\Om_1)}^2 (\|\p_{x_2}b_{11}\|_{L^{\infty} (\Om_1 )}^2 + \| x_2\p_{x_2} b_{12}  \|_{L^{\infty} (\Om_1 )}^2 \\\label{check2}
&& \q + \|\p_{x_2}b_1\|_{L^{\infty}(\Om_1)}^2 + \|b_2\|_{H^2 (\Om )}^2+\|x_i\p_{x_2}b_2\|_{L^{\infty}(\Om_1)}))
 \le  C_* \| F_0 \|_{H^1_r (D)}^2,
\ee
 where $c_*$ depends on $\Om_2$, $\Om_1$, the $L^{\infty}$ norm of $b_{11}$, $b_{12} x_i$, $b_2 x_i$ ($i = 2,3$), $b_3$ and the moduli of continuity of $b_{11}$, $b_{12} x_i$ ($i = 2,3$), and thus $C_*$ depends on the $H^3_r (D)$ norms of  $k_{11}$, $k_1$, the $H^2_r (D)$ norms of $\p_r k_{12}$, $\f {k_{12}}r$, $\p_r k_2$, $\f {k_2}r$ and the $L^{\oo}_r (D)$ norm of $k_{12}$, $\p_{x_1} k_{12}$ and $k_2$. Similarly, one derive the estimate of $\na^2 \p_{x_3} \check{\psi}$.

Since the following equality holds in $\Omega_1$,
\begin{equation}\label{check3}
\p_{ x_1}^2 \check{\psi }  =  \f 1{b_{11}} \b(f_0 - 2 b_{12} \sum_{j =2}^3 x_j \p_{x_1 x_j}^2 \check{\psi } - \sum_{i =2}^3  \p_{ x_i}^2 \check{\psi } - b_1 \p_{x_1} \check{\psi } - b_2 \sum_{j =2}^3 x_j \p_{x_j} \check{\psi }  \b),
\end{equation}
thus one has
\begin{equation*}
\|\check{\psi}\|_{H^3 (\Om_2)} \le C_* \| F_0 \|_{H^1_r (D)}.
\end{equation*}

Define $\check{w}_2 = \p_{x_2} \check{v}_2$, then $\check{w}_2 (x_1, x')$ solves
\begin{equation*}
\begin{cases}
  b_{11} \p_{x_1}^2 \check{w}_2 + 2 b_{12} \sum_{i = 2}^3 x_i \p_{x_1 x_i}^2 \check{w}_2 + \sum_{i = 2}^3 \p_{x_i}^2 \check{w}_2 + b_4 \p_{x_1} \check{w}_2 + b_2 \sum_{i = 2}^3 x_i \p_{x_i} \check{w}_2 = f_2, &  \\
  \check{w}_2 (L_0, x') = 0,
\end{cases}
\end{equation*}
where
\be\no
&& b_4= 2 \p_{x_2} (b_{12} x_2) + b_3 = b_1 + 4 \p_{x_2} (b_{12} x_2), \\\no
&& f_2 =  \p_{x_2} f_1 - \p_{x_2} b_{11} \p_{x_1}^2 \check{v}_2 - 2 \p_{x_2} b_{12} x_3 \p_{x_1 x_3}^2 \check{v}_2 - \p_{x_2} b_3 \p_{x_1} \check{v}_2 - \p_{x_2} b_2 x_3 \p_{x_3} \check{v}_2 - \p_{x_2} (b_2 x_2) w_2 
.
\ee
Using the similar approach as in \eqref{check2}, for any $\Om_2\supset\supset \Om_3:=\{(x_1,x'): L_0<x_1<\frac{1}{4}L_0, |x'|\leq 1-\frac{1}{2}\beta_0\}$, one has
\be\no
&& \iiint_{\Om_3} |\na^2 \p_{x_2}^2 \check{\psi}|^2 dx' dx_1 \le c_* \b( \iiint_{\Om_2} |\p_{x_2}^2 \check{\psi} (x_1, x')|^2 dx' dx_1 + \iiint_{\Om_2} |f_2 (x_1, x')|^2 dx' dx_1 \b) \\\no
&& \le
c_* \| \check{\psi} \|_{H^2 (\Om )}^2 + c_* \| F_0 \|_{H^2_r (D)}^2
+ c_* \| \na^2 \p_{x_2} \check{\psi} \|_{L^2 (\Om_2)}^2 ( \| \p_{x_2} b_{11} \|^2_{L^{\oo} (\Om)} + \| \p_{x_2} (b_{12} x_3) \|_{L^{\oo} (\Om)}^2)\\\no
&& \q
+ c_* \| \na^2 \check{\psi} \|^2_{L^4 (\Om_2)} (\| \p_{x_2}^2 b_{11} \|^2_{L^4 (\Om)}
+ \| \p_{x_2}^2 (b_{12} x_3) \|_{L^4 (\Om)}^2 + \| \p_{x_2} b_1 \|_{L^4 (\Om )}^2 + \| \p_{x_2}^2 (b_{12} x_2) \|^2_{L^4 (\Om)}  \\\no
&& \q
+ \| \p_{x_2} (b_2 x_2) \|^2_{L^4 (\Om )} + \| \p_{x_2} (b_2 x_3) \|_{L^4 (\Om)}^2  )
+ c_* \| \na \check{\psi} \|_{L^{\oo} (\Om_2)}^2 (\| \p_{x_2}^2 b_1 \|_{L^2 (\Om )}^2 + \| \p_{x_2}^2 b_2 x_2 \|^2_{L^2 (\Om)} \\\no
&& \q
+ \| \p_{x_2}^2 (b_2 x_2) \|_{L^2 (\Om )}^2)
\le C_* \| F_0 \|_{H^2_r (D)}^2. 
\ee
The estimates of $\na \p_{x_2 x_3}^2 \psi $ and $\na^2 \p_{x_3 }^2 \psi $ can be derived similarly. Again utilizing \eqref{check3}, one gets the estimate of $\p_{x_1}^4 \check{\psi}$. Finally,
\begin{equation*}
\iiint_{\Om_3} |\na^3 \check{\psi }|^2 + |\na^4 \check{\psi }|^2 dx_1 dx' \le C_* \iint_D (|F_0|^2 + |\na F_0|^2 + |\na^2 F_0|^2) r dr dx_1.
\end{equation*}
According to the norms equivalence \eqref{equiv} in Appendix \S\ref{appendix}, together with \eqref{w2h23}, the estimate \eqref{H31} holds.

\textbf{Step 3.} Verify the boundary condition. Differentiating \eqref{linearized2} with respect to $r$ in $D_{\f 18}$ and evaluating at $(x_1, 0)$, one derives that
\begin{equation*}
\p_r^3 \psi (x_1, 0) + \p_r (\f 1r \p_r \psi ) (x_1, 0) = 0.
\end{equation*}
Since
\begin{equation*}
\p_r (\f 1r \p_r \psi ) (x_1, 0) = \lim_{r \to 0^+} \f {r \p_{rr}^2 \psi - \p_r \psi }{r^2} = \lim_{r \to 0^+} \f {r \p_r^3 \psi }{2r} = \f 1r \p_r^3 \psi (x_1, 0),
\end{equation*}
then
\begin{equation}\label{c1}
\p_r^3 \psi (x_1, 0) =0, \q \forall x_1 \in [L_0, \f {L_0}8].
\end{equation}
\end{proof}

 To improve the regularity of $\psi $ in $[\f {3 L_0}8, L_1] \times [0, 1]$, we adopt an approach introduced by Kuzmin \cite{Kuzmin2002} and extend our problem to an auxiliary problem in a longer cylinder where the equation in \eqref{linearized2} becomes elliptic near the exit of the new cylinder. Firstly, our background solution is extended to $[L_0, L_2]$ where $L_2 = 2 L_1$ by simply extending the function $\bar{f}$ to $[L_0, L_2]$ so that $\bar{f} (x_1) $ is a $C^4$ differentiable function on $[L_0, L_2]$ and $\bar{f} (x_1)$ is positive on $(0, L_2]$. In the meanwhile, one can extend $(\bar{u}, \bar{\rho})$ to $[L_0, L_2]$ using the theory of ordinary differential equation so that the functions $\bar{k}_1$, $\bar{k}_{11}$ defined in \eqref{5} also satisfy the properties in \eqref{6}-\eqref{7} on $[L_0, L_2]$ if $d_0$ is chosen to be large enough.

 Let $l = \f {L_1}{20}$. We define two non-increasing cut-off functions on $[L_0, L_2]$ as follows
 \begin{equation*}
 \zeta_1 (x_1) = \begin{cases}
               1, & \mbox{if } L_0 \le x_1 \le L_1 + 2 l  \\
               0, & \mbox{if } L_1 + 4 l \le x_1 \le L_2
             \end{cases},
             \q
 \zeta_2 (x_1) = \begin{cases}
               1, & \mbox{if } L_0 \le x_1 \le L_1 + l \\
               0, & \mbox{if } L_1 + 2 l \le x_1 \le L_2
             \end{cases}.
 \end{equation*}
 Set
 \be\no
 \bar{a}_{11} (x_1) = \bar{k}_{11} (x_1) \zeta_1 (x_1) + (1 - \zeta_1 (x_1)), \\\no
 \bar{a}_1 (x_1) = \bar{k}_1 (x_1) \zeta_2 (x_1) - k_0 (1 - \zeta_2 (x_1)),
 \ee
 where $k_0$ is a positive constant to be specified later. Then
 \begin{equation*}
 \bar{a}_{11} (x_1 ) = \begin{cases}
                         \bar{k}_{11} (x_1), & \mbox{if } L_0 \le x_1 \le L_1 + 2 l \\
                         1 , & \mbox{if } L_1 + 4 l \le x_1 \le L_2
                       \end{cases},
 \q
 \bar{a}_1 (x_1) = \begin{cases}
                     \bar{k}_1 (x_1), & \mbox{if } L_0 \le x_1 \le L_1 + l \\
                     - k_0 , & \mbox{if }L_1 + 2 l \le x_1 \le L_2
                   \end{cases},
 \end{equation*}
 and for $j = 0, 1 ,2 ,3$
 \be\no
 && 2 \bar{a}_1 + (2 j -1) \bar{a}_{11}' = 2 \bar{k}_1 \zeta_2 + (2 j -1) \bar{k}_{11}' \zeta_1 + (2j -1) (\bar{k}_{11} - 1) \zeta_1' - 2 k_0 (1 - \zeta_2)\\\no
 && = \begin{cases}
        2 \bar{k}_1 + (2 j -1) \bar{k}_{11}' \le \ka_* < 0, & \mbox{if } L_0 \le x_1 \le L_1 + l, \\
        2 \bar{k}_1 \zeta_2 + (2 j -1) \bar{k}_{11}' - 2 k_0 (1 - \zeta_2), & \mbox{if } L_1 + l \le x_1 \le L_1 + 2 l, \\
        (2 j -1) \bar{k}_{11}' \zeta_1 + (2 j -1) (\bar{k}_{11} - 1) \zeta_1' - 2 k_0, & \mbox{if } L_1 + 2 l \le x_1 \le L_1 + 4 l, \\
        - 2 k_0 , & \mbox{if } L_1 + 4 l \le x_1 \le L_2.
      \end{cases}
 \ee
 Therefore, for sufficiently large $k_0$, $d_0 > 0$, the following inequalities hold, for $\forall x_1 \in [L_0, L_2]$,
 \be\label{8}
 2 \bar{a}_1 + (2 j -1) \bar{a}_{11}' \le - \ka_* < 0, \q j = 0, 1, 2, 3, 4,\\\label{9}
 (\bar{a}_1 + j \bar{a}_{11}') d - \f 12 (\bar{a}_{11} d)' \ge 4, \q  j = 0, 1, 2, 3,
 \ee
 where $d (x_1) = 6 (x_1 - d_0) < 0$ for any $x_1 \in [L_0, L_2]$.

 Furthermore, we introduce the extension operator $\mc{E}$ that extends a function $f (x_1, r)$ from $ D $ to $D_2 \co  \{(x_1, r ): L_0 < x_1 < L_2, 0 < r < 1\} $ as
\begin{equation*}
\mc{E} (f) (x_1, r) = \begin{cases}
                   f (x_1, r), & \mbox{if } (x_1, r) \in D, \\
                   \sum_{j =1}^4 c_j f (L_1 + \f 1j (L_1 - x_1), r) , & \mbox{if } (x_1, r) \in (L_1, L_2) \times (0, 1),
                 \end{cases}
\end{equation*}
where the constants $c_j$ are uniquely determined by the following algebraic equations
\begin{equation*}
\sum_{j =1}^4 (- \f 1j)^k c_j = 1, \q k = 0, 1, 2,3.
\end{equation*}
The extension operator $\mc{E} $ is a bounded operator from $H^j_r (D )$ to $H^j_r (D_2)$ for any $j = 1, 2, 3, 4$. Hence, we can define the extension of the operator $\mc{L}$ in \eqref{linearized2} to the domain $D_2$ as follows
\begin{equation*}
 a_{11} = \bar{a}_{11} + \mc{E} (k_{11} - \bar{k}_{11}),
\q  a_{12} = a_{21} = \mc{E} (k_{12}), \q  a_1 = \bar{a}_1 + \mc{E} (k_1 - \bar{k}_1), \q
a_2 =  \mc{E} (k_2 ),
\q G_0 = \mc{E} F_0.
\end{equation*}
Then, similar to Lemma \ref{coe-estimate}, there hold
\begin{equation}\label{coe2}
\begin{cases}
  \| a_{11} - \bar{a}_{11} \|_{H^3_r (D_2)} + \| a_1 - \bar{a}_1 \|_{H^3_r (D_2)} +\|\p_r a_{11}\|_{L^{\infty}_r(D_2)}+\|\p_r a_{1}\|_{L^{\infty}_r(D_2)} \le C_* ( \eps + \de_0),   \\
  \| \f {a_{12}}r  \|_{H^2_r (D_2)} +  \| a_{12} \|_{L^{\oo}_r (D_2)} + \| \p_r a_{12} \|_{H^2_r (D_2)} + \| \p_{x_1} a_{12} \|_{L^{\oo}_r (D_2)} \le C_* ( \eps + \de_0), \\
   \| \f {a_2}r  \|_{H^2_r (D_2)} + \| a_2 \|_{L^{\oo}_r (D_2)} + \| \p_r a_2 \|_{H^2_r (D_2)} +   \| \p_{x_1} a_2 \|_{L^{\oo}_r (D_2)}
   \le C_* ( \eps + \de_0)^2, \\
   \| \p_{x_1}^2 a_{12} \|_{L^4_r (D_2)} + \| \na \p_{x_1}^2 a_{12} \|_{L^2_r (D_2)} \le C_0 (\eps + \de_0), \\
   \| \p_{x_1}^2 a_2 \|_{L^2_r (D_2)} + \| \na \p_{x_1}^2 a_2 \|_{L^2_r (D_2)} \le C_0 (\eps + \de_0)^2,\\
   \| G_0  \|_{H^3_r (D)} \le C_* (  \| F \|_{H^3_r (D)} + \| \mc{F} \|_{H^2_r (D )} )
  \le C_* (\epsilon + ( \eps + \de_0)^2),\\
  a_{12} (x_1, 0 ) = a_{12} (x_1, 1) = \p_{r}^2 a_{12} (x_1, 0) = \p_{r}^2 a_{12} (x_1, 1) = 0, \\
  \p_r a_{11} (x_1, 0) = \p_r a_{11} (x_1, 1) = \p_r a_1 (x_1, 0) = \p_r a_1 (x_1, 1) = 0, \q x_1 \in [L_0, L_2], \\
  a_2 (x_1, 0) = a_2 (x_1, 1) = 0, \q \p_r G_0 (x_1, 0 ) =\p_r G_0 (x_1, 1 ) = 0,\\
  a_{12}(L_0, r) = 0, \ \forall r\in [1 - \beta_0,1].
\end{cases}
\end{equation}

Let us consider the following auxiliary problem in domain $D_2$,
\begin{equation}\label{au1}
\begin{cases}
  \mc{M} \Psi = a_{11} \p_{ x_1}^2 \Psi + 2 a_{12} \p_{x_1 r}^2 \Psi + \p_{r}^2 \Psi + \f 1r \p_r \Psi  + a_1 \p_{x_1} \Psi + a_2 \p_r \Psi = G_0 , &  (x_1, r) \in D_2, \\
  \Psi (L_0, r) = 0 , & \forall r \in [0, 1],\\
  \p_r \Psi (x_1, 0) = \p_r \Psi (x_1, 1) = 0, & \forall x_1 \in [L_0, L_2],\\
  \p_{x_1} \Psi (L_2, r) = 0, & \forall r \in [0, 1].
\end{cases}
\end{equation}

We will next prove that there exists a unique $H^2_r $ strong solution $\Psi $ to \eqref{au1} and derive the higher order estimates for $\na \p_{x_1 }^2 \Psi $ and $\na \p_{x_1}^3 \Psi$ in the subregion $(\f 38 L_0, L_1 + 12 l) \times (0,1)$. Further, it is verified that $\psi = \Psi $ on $D$, thus one obtains the estimates for $\na \p_{x_1}^2 \psi$ and $\na \p_{x_1}^3 \psi$ on $D$.

To find a solution to \eqref{au1}, we still resort to the singular perturbation problem
\begin{equation}\label{au2}
\begin{cases}
  \mc{M}^{\si } \Psi^{\si }  = \si \p_{x_1}^3 \Psi^{\si }  + a_{11} \p_{x_1}^2 \Psi^{\si }  + 2 a_{12} \p_{x_1 r}^2 \Psi^{\si }  + \p_{r}^2 \Psi^{\si } \\
  \q \q + \f 1r \p_r \Psi^{\si } + a_1 \p_{x_1} \Psi^{\si }  + a_2 \p_r \Psi^{\si }  = G_0 , & \forall (x_1, r) \in D_2, \\
  \Psi^{\si }  (L_0, r) = \p_{x_1}^2 \Psi^{\si } (L_0, r) = 0 , & \forall r \in [0, 1],\\
  \p_r \Psi^{\si }  (x_1, 0) = \p_r \Psi (x_1, 1) = 0, & \forall x_1 \in [L_0, L_2],\\
  \p_{x_1}  \Psi^{\si }  (L_2, r) = 0, & \forall r \in [0, 1].
\end{cases}
\end{equation}

One could prove the following $H^2_r (D_2)$ estimate for the solution $\Psi^{\si}$ to \eqref{au2}.

\begin{lemma}\label{aH2}
  There exist $\epsilon_*>0,\de_* > 0$ depending only on the background flow and the boundary data, such that if $0<\epsilon<\epsilon_*, 0 < \de_0 \le \de_*$ in \eqref{coe2}, the classical solution $\Psi^{\si }$ to \eqref{au2} satisfies 
  \begin{equation}\label{aH1}
   \si \iint_{D_2} |\p_{x_1}^2 \Psi^{\si }|^2 r dr dx_1 + \iint_{D_2} (|\Psi^{\si }|^2 + |\na \Psi^{\si}|^2 ) r dr dx_1 \le C_* \iint_{D_2} G_0^2 r dr dx_1, \end{equation}
  \begin{equation}\label{aH21}
   \si \int_{\f 78 L_0}^{L_1 + 16 l} \int_0^1 |\p_{x_1}^3 \Psi^{\si}|^2 r dr dx_1 +
   \iint_{D_2} ( |\na^2 \Psi^{\si }|^2 + |\f 1r \p_r \Psi^{\si}|^2) r dr dx_1 
  \le C_* \iint_{D_2} (|G_0|^2 + |\na G_0|^2 ) r dr dx_1,
  \end{equation}
 where $C_*$ depends only on the $H^3_r (D_2)$ norms of $a_{11}$, $a_1$, the $H^2_r (D_2)$ norms of $\p_r a_{12}$, $\f {a_{12}}r$, $\p_r a_2$ and the $L^{\oo}_r (D_2)$ norms of $a_{12}$, $\p_{x_1} a_{12}$ and $a_2$.
\end{lemma}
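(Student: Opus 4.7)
The proof adapts the multiplier-plus-Galerkin strategy of Lemma \ref{H1estimate-ap} to the extended cylinder $D_2$. Three structural features control the adaptation: (i) the analogues \eqref{8}-\eqref{9} of the background inequalities \eqref{6}-\eqref{7} now hold on $[L_0, L_2]$ for $j = 0,1,2,3$; (ii) the new terminal boundary condition $\p_{x_1}\Psi^{\si}(L_2, r) = 0$ replaces $\p_{x_1}^2 \psi^{\si}(L_1, r) = 0$ used in \eqref{ap1}; and (iii) $a_{11}(L_2, r) = 1$ ensures uniform ellipticity of \eqref{au2} near the new exit.

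For \eqref{aH1}, multiply \eqref{au2} by $d(x_1)\p_{x_1}\Psi^{\si}$ and integrate by parts over $D_2$, following the derivation of \eqref{4}. The dissipative integral becomes $-\si \iint_{D_2} d (\p_{x_1}^2\Psi^{\si})^2 r dr dx_1$ plus a cross term $-6\si \iint_{D_2} \p_{x_1}\Psi^{\si}\p_{x_1}^2\Psi^{\si}\, r dr dx_1$ absorbable by Cauchy--Schwarz, while the boundary contributions at $x_1 = L_0$ and $x_1 = L_2$ vanish thanks to $\p_{x_1}^2\Psi^{\si}(L_0,r) = 0$ and $\p_{x_1}\Psi^{\si}(L_2, r) = 0$ respectively. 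The coercive bulk quadratic form produced by \eqref{8}-\eqref{9} with $j = 0$ dominates the lower-order contributions involving $d a_2$ and $d a_{12}$ once $\eps_*,\de_*$ are chosen sufficiently small, with \eqref{coe2} supplying the needed $L^{\oo}$ bounds. The remaining boundary integrals $\tfrac12 d(L_0) a_{11}(L_0,r)(\p_{x_1}\Psi^{\si})^2$ and $-\tfrac12 d(L_2)(\p_r\Psi^{\si})^2$ carry favorable sign, and Poincar\'e's inequality (using $\Psi^{\si}(L_0, r) = 0$) recovers the $L^2_r$ bound on $\Psi^{\si}$.

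For \eqref{aH21}, the three successive steps of Lemma \ref{H1estimate-ap} are reproduced with modified cutoffs. Step one: multiply by $\eta_1^2 \p_{x_1}^2\Psi^{\si}$ with $\eta_1$ supported in $[L_0, L_0/8]$, where $a_{11} \ge \ka_*>0$, to obtain the local estimate of $\p_{x_1}^2\Psi^{\si}$ and $\p_{x_1 r}^2\Psi^{\si}$ near the entrance. Step two: set $w_1 = \p_{x_1}\Psi^{\si}$ (which satisfies $\p_{x_1} w_1(L_0, r) = 0$ and $w_1(L_2, r) = 0$), differentiate \eqref{au2} in $x_1$, and multiply by $\eta_2^2 d(x_1)\p_{x_1} w_1$ with $\eta_2$ supported in $[3L_0/4, L_1 + 16 l]$ and identically $1$ on $[L_0/2, L_1 + 12 l]$; this support choice is forced because $w_1(L_2, r) = 0$ alone does not eliminate the boundary contribution at $L_2$ produced by the third-order term $\si \p_{x_1}^3 w_1$. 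The resulting coercivity from \eqref{8}-\eqref{9} with $j = 1$ delivers both $\|\na w_1\|_{L^2_r}$ on the support of $\eta_2$ and the $\si$-weighted $\|\p_{x_1}^3 \Psi^{\si}\|_{L^2_r}$ bound restricted to $[\tfrac78 L_0, L_1 + 16 l]$. Step three: apply the multiplier $-d(x_1)\p_{x_1}(\p_r^2\Psi^{\si} + \tfrac 1r \p_r\Psi^{\si})$ on the whole $D_2$ to recover $\|\p_r^2\Psi^{\si}\|_{L^2_r(D_2)}^2 + \|\tfrac 1r \p_r\Psi^{\si}\|_{L^2_r(D_2)}^2$; differentiating $\p_{x_1}\Psi^{\si}(L_2, r) = 0$ in $r$ gives $\p_{x_1 r}^2\Psi^{\si}(L_2, r) = 0$, so the boundary integral at $L_2$ collapses, while $\p_r\Psi^{\si}(L_0,r) = 0$ closes the inlet boundary term. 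All three substeps are carried out on the Galerkin approximation in the eigenbasis of Lemma \ref{eigen} and then passed to the limit as in Lemmas \ref{exist1}-\ref{exist2}.

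The main obstacle is the careful bookkeeping of boundary contributions at $x_1 = L_2$ under the new condition $\p_{x_1}\Psi^{\si}(L_2, r) = 0$: in contrast to the entrance condition used for $\psi^{\si}$ in Lemma \ref{H1estimate-ap}, this terminal condition is weaker at high derivative order and cannot absorb the cubic dissipation term, forcing $\eta_2$ to vanish near $L_2$; this is precisely what restricts the $\si$-weighted third-derivative estimate to $[\tfrac78 L_0, L_1 + 16 l]$. Compatibility with the full $D_2$ estimates on $|\na^2 \Psi^{\si}|^2$ and $|\tfrac 1r \p_r \Psi^{\si}|^2$ is maintained because the ellipticity $a_{11}(L_2,r) = 1$ near $L_2$ provides control on $x_1$-derivatives via standard interior elliptic estimates.
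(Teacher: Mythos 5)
Your overall strategy coincides with the paper's: the multiplier $d(x_1)\p_{x_1}\Psi^{\si}$ for \eqref{aH1} (with the boundary term $\si\int_0^1 d\,\p_{x_1}\Psi^{\si}\p_{x_1}^2\Psi^{\si}\, r\,dr\big|_{L_0}^{L_2}$ killed by $\p_{x_1}^2\Psi^{\si}(L_0,\cdot)=0$ and $\p_{x_1}\Psi^{\si}(L_2,\cdot)=0$), an entrance estimate with $\eta_1^2\p_{x_1}^2\Psi^{\si}$, a middle estimate for $W_1=\p_{x_1}\Psi^{\si}$ with a multiplier $\eta^2 d\,\p_{x_1}W_1$ whose cutoff vanishes near both ends, and the singular multiplier $-d\,\p_{x_1}(\p_r^2\Psi^{\si}+\f1r\p_r\Psi^{\si})$ for the radial second derivatives, all run on the Galerkin approximations and passed to the limit.

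There is, however, one genuine gap: you never establish the exit estimate $\int_{L_1+4l}^{L_2}\int_0^1|\na\p_{x_1}\Psi^{\si}|^2\, r\,dr\,dx_1\le C_*\iint_{D_2}G_0^2\, r\,dr\,dx_1$, which the paper obtains as a separate step by testing the undifferentiated equation with $\eta_3^2\p_{x_1}^2\Psi^{\si}$, where $\eta_3$ is an \emph{increasing} cutoff equal to $1$ on $[L_1+4l,L_2]$. This estimate is needed twice: it absorbs the commutator terms $|\eta'|^2|\na W_1|^2$ produced at the right end of your middle step (your transition zone $[L_1+12l,L_1+16l]$ lies where the middle cutoff is not identically one), and it supplies the bound for $|\na^2\Psi^{\si}|$ on $[L_1+16l,L_2]\times[0,1]$, without which the full-$D_2$ inequality \eqref{aH21} is not proved. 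Your fallback --- ``standard interior elliptic estimates'' near $L_2$ --- does not work as stated: for fixed $\si>0$ the operator contains the third-order term $\si\p_{x_1}^3$, which cannot be treated as a controlled source, and the relevant region abuts the boundary $x_1=L_2$. The correct mechanism is again an energy identity: because the multiplier is $\p_{x_1}^2\Psi^{\si}$ applied to the undifferentiated equation, the cubic term integrates to $+\f{\si}2\int_0^1(\p_{x_1}^2\Psi^{\si}(L_2,r))^2 r\,dr$, which has a favorable sign, and the boundary term coming from $\p_r^2\Psi^{\si}$ at $L_2$ vanishes since $\p_{x_1}\Psi^{\si}(L_2,\cdot)=0$ implies $\p_{x_1r}^2\Psi^{\si}(L_2,\cdot)=0$; so here the cutoff must \emph{not} vanish at $L_2$. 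Your heuristic that the terminal condition ``cannot absorb the cubic dissipation term'' is correct for the differentiated equation paired with $d\,\p_{x_1}W_1$, but you have implicitly extended it to a step where it does not apply. (A cosmetic point: with your cutoff equal to $1$ only on $[\f12 L_0, L_1+12l]$, the $\si$-weighted third-derivative bound comes out on that smaller interval; take it equal to $1$ on $[\f78 L_0, L_1+16l]$ to match \eqref{aH21}.)
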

\begin{proof}
  The proof is quite similar to that of Lemma \ref{H1estimate-ap}. We omit the superscript $\si$ for simplicity of notations. According to the boundary conditions in \eqref{au2}, the boundary integral term $\int_0^1 \si d \p_{x_1} \Psi \p_{x_1 }^2 \Psi r \b|_{L_0}^{L_2} dr $ vanishes. Since \eqref{8}-\eqref{9} hold, one can derive \eqref{aH1} as in Lemma \ref{H1estimate-ap}. Same argument as in Lemma \ref{H1estimate-ap} yields
  \begin{equation}\label{aH22}
  \int_{L_0}^{\f { L_0}2} \int_0^1 |\na \p_{x_1} \Psi |^2 r dr dx_1 \le C_* \| G_0 \|_{L^2_r (D_2)}^2.
  \end{equation}

%

  Choose a monotonic increasing cut-off function $\eta_3 \in C^{\oo} ([L_0, L_2])$ such that
  \begin{equation*}
  \eta_3 (x_1) = \begin{cases}
                 0, & L_0 \le x_1 \le L_1 + 2 l, \\
                 1, & L_1 + 4 l \le x_1 \le L_2.
               \end{cases}
  \end{equation*}
 Multiplying \eqref{au2} by $\eta_3^2 \p_{x_1}^2 \Psi$, integrations by parts yield that
  \be\no
&& \iint_{D_2} \eta_3^2 (a_{11} (\p_{x_1}^2 \Psi)^2 + (\p_{x_1 r}^2 \Psi)^2) r dr dx_1
 + \f {\si }2 \int_0^1 (\p_{x_1 }^2 \Psi (L_2, r))^2 r dr
\\\no
&  = &
\si \iint_{D_2} \eta_3 \eta_3' (\p_{x_1}^2 \Psi)^2 r dr dx_1
+ \iint_{D_2} \eta_3^2 \p_{x_1 }^2 \Psi ( G_0 - a_1 \p_{x_1} \Psi - a_2 \p_r \Psi ) r dr dx_1
 \\\no
&& - 2 \iint_{D_2} \eta_3^2 a_{12} \p_{x_1}^2 \Psi \p_{x_1r}^2 \Psi r dr dx_1
- 2 \iint_{D_2}  \eta_3 \eta_3' \p_{x_1 r}^2 \Psi \p_r \Psi r dr dx_1.
\ee
Using \eqref{aH1} to control the term involving $\si$, one gains
\begin{equation}\label{aH23}
\int_{L_1 + 4 l }^{L_2} \int_0^1 |\na \p_{x_1} \Psi|^2 r dr dx_1 \le C_* \iint_{D_2} G_0^2 r dr dx_1 .
\end{equation}

Set $W_1 = \p_{x_1} \Psi$. Then $W_1$ satisfies
\begin{equation}\label{aw1}
\begin{cases}
  \si \p_{x_1}^3 W_1 + a_{11} \p_{x_1}^2 W_1 + 2 a_{12} \p_{x_1 r}^2 W_1 + \p_{r}^2 W_1 + \f 1r \p_r W_1 + a_3 \p_{x_1} W_1 + a_4 \p_r W_1 = G_1,  \\
  \p_{x_1} W_1 (L_0, r)  = 0, \q \forall r \in [0, 1], \\
  \p_r W_1 (x_1, 0) = \p_r W_1 (x_1, 1) = 0, \q \forall x_1 \in [L_0, L_2], \\
   W_1 (L_2, r) = 0, \q \forall r \in [0, 1],
\end{cases}
\end{equation}
where
\begin{equation*}
a_3 = \p_{x_1} a_{11} + a_1 , \quad
a_4 = 2 \p_{x_1} a_{12} + a_2 , \quad
G_1 = \p_{x_1} G_0 - \p_{x_1} a_2 \p_r \Psi - \p_{x_1} a_1 \p_{x_1} \Psi.
\end{equation*}
Set $0 \le \eta_4 (x_1) \le 1$ be a smooth cut-off function on $[L_0, L_2]$ as
\begin{equation*}
\eta_4 (x_1) = \begin{cases}
               0, & L_0 \le x_1 \le \f {15}{16} L_0, \\
               1, & \f 78 L_0 \le x_1 \le L_1 + 16 l, \\
               0, & L_1 + 18 l \le x_1 \le L_2.
             \end{cases}
\end{equation*}
Multiplying \eqref{aw1} by $\eta_4^2 d \p_{x_1} W_1$ and integrating over $D_2$, some integrations by parts allow us to obtain
\be\no
&& - \si \iint_{D_2} \eta_4^2 d (\p_{x_1}^2 W_1)^2 r dr dx_1
+ \iint_{D_2} (\eta_4^2 d a_3 - \f 12 \p_{x_1} (\eta_4^2 d a_{11}) - \eta_4^2 d \p_r a_{12} - \eta_4^2 d \f {a_{12}}r ) (\p_{x_1} W_1)^2 r dr dx_1 \\\no
&& \q + \iint_{D_2} ( \eta_4 \eta_4' d + 3 \eta_4^2) (\p_r W_1)^2 r dr dx_1
+ \iint_{D_2} \eta_4^2 d ( 2 \p_{x_1} a_{12} + a_2 )  \p_{x_1} W_1 \p_r W_1 r dr dx_1
\\\no
&&
= \iint_{D_2 } \eta_4^2 d \p_{x_1} W_1 G_1 r dr dx_1
+ \si \iint_{D_2} \p_{x_1} (\eta_4^2 d) \p_{x_1} W_1 \p_{x_1 }^2 W_1 r dr dx_1.
\ee
Note that
\be\no
&& \si \iint_{D_2} \p_{x_1} (\eta_4^2 d) \p_{x_1} W_1 \p_{x_1 }^2 W_1 r dr dx_1  \\\no
&& \le - \f {\si}4 \iint_{D_2} \eta_4^2 d |\p_{x_1}^2 W_1|^2 r dr dx_1
+ C_* \si \iint_{D_2} ( \eta_4^2 + (\eta_4')^2 ) |\p_{x_1} W_1|^2 r dr dx_1,
\ee
and the support of $\eta_4'$ is contained in $(\f {15}{16} L_0, \f 78 L_0) \cup (L_1 + 16 l, L_1 + 18 l)$ such that $\iint_{D_2} \eta_4 \eta_4' d  (\p_r W_1)^2  r dr dx_1$ can be controlled by \eqref{aH22} and \eqref{aH23}. By \eqref{9}, \eqref{coe2} and \eqref{aH1}, there holds
\begin{equation}\label{aH24}
 \iint_{D_2} ( \si \eta_4^2 (\p_{x_1}^2 W_1)^2 + \eta_4^2 |\na W_1|^2 ) r dr d x_1
 \le C_* \iint_{D_2} ( |\na W_1|^2 +  G_1^2 ) r dr d x_1.
\end{equation}
Since
\be\no
&& \| G_1 \|_{L^2_r (D_2)}   \le  \| \p_{x_1} G_0 \|_{L^2_r (D_2)} + \| \p_{x_1} a_2 \|_{L^{\oo}_r (D_2)} \| \p_r \Psi \|_{L^2_r (D_2)} + \| \p_{x_1} a_1 \|_{L^{\oo}_r (D_2)} \| \p_{x_1} \Psi \|_{L^2_r (D_2)} \\\no
&& \le  C_* ( \| G_0 \|_{L^2_r (D_2)} +  \| \na G_0 \|_{L^2_r (D_2)}).
\ee
Combining with \eqref{aH22}, \eqref{aH23} and \eqref{aH24} give
\begin{equation}\label{aH25}
\si \int_{\f 78 L_0}^{L_1 + 16 l} \int_0^1 |\p_{x_1}^3 \Psi^{\si}|^2 r dr dx_1 + \iint_{D_2} |\na \p_{x_1} \Psi^{\si}|^2 r dr dx_1 \le C_* (\| G_0 \|_{L^2_r (D_2)}^2 + \| \na G_0 \|_{L^2_r (D_2)}^2).
\end{equation}

It remains to estimate $\p_{r}^2 \Psi + \f 1r \p_r \Psi $. Define $V_1 = \p_r \Psi$, then
\begin{equation}\label{av1}
\begin{cases}
  \si \p_{x_1}^3 \Psi + a_{11} \p_{x_1}^2 \Psi + 2 a_{12} \p_{x_1 r}^2 \Psi + \p_r V_1 + \f 1r V_1 + a_1 \p_{x_1} \Psi + a_2 \p_r \Psi = G_0,  \\
  V_1 (x_1, 0) = V_1 (x_1, 1) = 0, \q \forall x_1 \in [L_0, L_2], \\
  V_1 (L_0, r) = \p_{x_1}^2 V_1 (L_0, r) = 0, \q \forall r \in [0, 1], \\
  \p_{x_1} V_1 (L_2, r) = 0, \q \forall r \in [0, 1].
\end{cases}
\end{equation}
Since $a_{11} (L_0, r) > 0 $ for any $r \in [0, 1]$ and $d(x_1) < 0$ for any $x_1 \in [L_0, L_2]$, multiplying the equation in $\eqref{av1}$ by $- d(x_1) \p_{x_1} (\p_r V_1 + \f {V_1}r )$ and integrating over $D_2$, one gets
\begin{equation}\label{aH26}
 \iint_{D_2} ( |\na V_1|^2 + |\f {V_1}{r}|^2 ) r dr dx
 + \iint_{D_2} d \p_r a_{11} \p_{x_1} V_1 \p_{x_1}^2 \Psi r dr dx_1
\le C_* \iint_{D_2} (|G_0|^2 + |\na G_0|^2 ) r dr dx .
\end{equation}
This, together with \eqref{aH25}, derives that
\begin{equation*}
\si \int_{\f 78 L_0}^{L_1 + 16 l} \int_0^1 |\p_{x_1}^3 \Psi^{\si}|^2 r dr dx_1 +
\iint_{D_2} \b(  |\na^2 \Psi^{\si}|^2   + |\f {\p_r \Psi^{\si}}r|^2 \b) r dr dx_1
\le C_* \iint_{D_2} (|G_0|^2 + |\na G_0|^2) r dr dx_1.
\end{equation*}
\end{proof}

Then one can easily prove that
\begin{lemma}\label{equality}
  There exists a unique $H^2_r (D_2)$ strong solution $\Psi $ to \eqref{au1} with the estimate
  \begin{equation}\label{au100}
  \| \Psi \|_{H^2_r (D_2 )} \le C_* \| G_0 \|_{H^1_r (D_2)},
  \end{equation}
    where $C_*$ depends only on the $H^3_r (D_2)$ norms of $a_{11}$, $a_1$, the $H^2_r (D_2) $ norms of $\p_r a_{12}$, $\f {a_{12}}r$, $\p_r a_2$ and the $L^{\oo}_r (D_2)$ norms of $ k_{12}$, $\p_{x_1} k_{12}$ and $k_2$. Moreover, the solution $\Psi $ coincided with the $H^2_r $ strong solution $\psi$ to \eqref{linearized2} on the domain $D$.
\end{lemma}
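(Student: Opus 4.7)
The plan is to mimic the two-step existence scheme already used for the shorter-domain problem (Lemmas \ref{exist1}--\ref{exist2}), supplemented by the uniform-in-$\sigma$ estimates of Lemma \ref{aH2}, and then to identify $\Psi|_D$ with $\psi$ by invoking uniqueness for \eqref{linearized2}. Concretely, I would first fix $\sigma>0$ and build Galerkin approximants $\Psi^{N,\sigma}(x_1,r)=\sum_{j=1}^{N}A_j^{N,\sigma}(x_1)b_j(r)$ using the orthonormal basis $\{b_j\}$ of Lemma \ref{eigen}. Projecting \eqref{au2} onto span$\{b_1,\dots,b_N\}$ yields a linear $3N\times 3N$ boundary value problem for $(A_j^{N,\sigma})$ on $[L_0,L_2]$ with the four homogeneous conditions $A_j^{N,\sigma}(L_0)=(A_j^{N,\sigma})''(L_0)=0$ and $(A_j^{N,\sigma})'(L_2)=0$; the multiplier argument of Lemma \ref{aH2} (applied to the Galerkin approximant) gives uniqueness for this ODE system, hence solvability, exactly as in Lemma \ref{exist1}.

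Next, using the bounds \eqref{aH1}--\eqref{aH21} which are independent of $N$, I pass to a subsequence $\Psi^{N,\sigma}\rightharpoonup\Psi^{\sigma}$ in $H^2_r(D_2)$ and strongly in $H^1_r(D_2)$, recovering the boundary conditions $\Psi^{\sigma}(L_0,r)=0$, $\partial_r\Psi^{\sigma}(x_1,0)=\partial_r\Psi^{\sigma}(x_1,1)=0$, and $\partial_{x_1}\Psi^{\sigma}(L_2,r)=0$ in the trace sense. Then, using the fact from \eqref{aH1}--\eqref{aH21} that $\|\Psi^{\sigma}\|_{H^2_r(D_2)}$ and $\sqrt{\sigma}\|\partial_{x_1}^2\Psi^{\sigma}\|_{L^2_r(D_2)}$ are uniformly bounded in $\sigma$, I extract a further subsequence $\Psi^{\sigma_j}\rightharpoonup\Psi$ in $H^2_r(D_2)$. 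The term $\sigma_j\partial_{x_1}^3\Psi^{\sigma_j}$ tested against any $\chi\in C^\infty_c(\overline{D_2})$ vanishing near $\{x_1=L_0\}$ and $\{x_1=L_2\}$ converges to $0$ by the uniform bound, so $\Psi$ solves \eqref{au1} in the sense of distributions; the $H^2_r$ regularity then upgrades this to the strong form, and the estimate \eqref{au100} follows from lower semicontinuity of norms under weak convergence. Uniqueness is immediate from \eqref{au100} applied to the difference of two solutions.

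Finally, to show $\Psi\equiv\psi$ on $D$, I restrict $\Psi$ to $D$. Since $a_{11}=k_{11}$, $a_{12}=k_{12}$, $a_1=k_1$, $a_2=k_2$, and $G_0=F_0$ on $D$ by construction of the extension operator $\mathcal{E}$, the restriction $\Psi|_D$ is an $H^2_r(D)$ strong solution of the PDE in \eqref{linearized2}, and the boundary conditions on $\{x_1=L_0\}$ and $\{r=0,1\}$ are inherited directly. Since no boundary condition is imposed on $\psi$ at $x_1=L_1$, $\Psi|_D$ is admissible, and Lemma \ref{exist2} (whose uniqueness part uses only the a priori bound \eqref{H2} applied to the difference) forces $\Psi|_D=\psi$.

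The only genuinely delicate point is the construction of the Galerkin solutions with the asymmetric set of endpoint conditions $\{A_j(L_0)=A_j''(L_0)=0,\ A_j'(L_2)=0\}$: one must verify that this $3N$-dimensional linear system is non-degenerate. The key observation, which is why the boundary conditions at $L_2$ in \eqref{au2} were chosen as they were, is that the multiplier $d(x_1)\partial_{x_1}\Psi^{N,\sigma}$ makes every boundary contribution at $L_2$ have a favorable sign (using $d(L_2)<0$ together with $a_{11}(L_2,r)=1>0$ and $\partial_{x_1}\Psi^{N,\sigma}(L_2,\cdot)=0$), exactly as exploited in Lemma \ref{aH2}. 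This same estimate both establishes uniqueness of the ODE system and provides the uniform bound needed for the limit—so the obstacle is really a bookkeeping one, already fully prepared for by Lemma \ref{aH2} and by the extension identities \eqref{8}--\eqref{9}.
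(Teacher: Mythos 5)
Your proposal is correct and follows essentially the same route as the paper: Galerkin approximation with the eigenbasis of Lemma \ref{eigen}, uniform-in-$N$ and uniform-in-$\sigma$ bounds from Lemma \ref{aH2}, weak limits first in $N$ then in $\sigma$, and identification of $\Psi|_D$ with $\psi$ via the energy estimate for the homogeneous difference problem (the paper writes out $v=\Psi-\psi$ and applies the multiplier of Lemma \ref{H1estimate}, which is exactly the uniqueness statement of Lemma \ref{exist2} that you invoke). The only blemish is the miscount ``four homogeneous conditions'' where there are three per mode ($A_j(L_0)=A_j''(L_0)=A_j'(L_2)=0$), matching the $3N$ conditions needed for the $N$ third-order ODEs; this does not affect the argument.
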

\begin{proof}
  Thanks to \eqref{aH1}-\eqref{aH21}, the existence and uniqueness of strong $H^2_r$ solution $\Psi^{\si}$ to \eqref{au2} can be proved by employing the Galerkin method as in Lemma \ref{exist2}. Since the estimates \eqref{aH1}-\eqref{aH21} are uniformly in $\si $, one can further extract a subsequence $\{\Psi^{\si_j} \}_{j = 1}^{\oo}$ which converges weakly to $\Psi$ in $H^2_r (D_2)$ as $\si_j \to 0$. This function $\Psi $ satisfies \eqref{au100} and solves the problem \eqref{au1}.

  Let $v =\Psi - \psi $, then $v \in H^2_r (D )$ solves
 \begin{equation*}
  \begin{cases}
    k_{11} \p_{x_1 }^2 v + 2 k_{12} \p_{x_1 r}^2 v + \p_{r}^2 v + \f 1r \p_r v
    + k_1 \p_{x_1} v + k_2 \p_r v = 0, & (x_1, r) \in D , \\
    v (L_0, r) = 0, & r \in [0, 1], \\
    \p_r v (x_1, 0 ) = \p_r v (x_1, 1) = 0 , & x_1 \in [L_0, L_1].
  \end{cases}
  \end{equation*}
  An energy estimate as in Lemma \ref{H1estimate} implies that $\iint_{D} |\na v|^2 r dr dx_1 = 0$ and thus $\na v \equiv 0$. Since $v (L_0, r) = 0$, it follows that $v (x_1, r) \equiv 0$ on $\Om $.
\end{proof}

The following lemma gives the estimate for $\na \p_{x_1}^2 \Psi $ on the subregion $(\f 58 L_0, L_1 + 14l) \times (0, 1)$.
\begin{lemma}\label{aH3}
Under the assumptions in Lemma \ref{aH2}, the classical solution to \eqref{au2} satisfies
  \begin{equation}\label{aH31}
   \si \int_{\f 58 L_0}^{L_1 + 14l } \int_0^1 |\p_{x_1}^4 \Psi^{\si}|^2 r dr dx_1 + \int_{\f 58 L_0}^{L_1 + 14l } \int_0^1 |\na \p_{x_1 }^2 \Psi^{\si}|^2 r dr dx_1 \le C_{\sharp} \| G_0 \|_{H^2_r (D_2)}^2,
\end{equation}
  where $C_\sharp$ depends only on the $C^3 (\overline{D_2})$ norms of $a_{11}$, $a_1$, and the $C^2 (\overline{D_2})$ norms of $\p_r a_{12}$, $\f {a_{12}}r$ and $\p_r a_2$ and the $C^0 (\overline{D_2})$ norms of $a_{12} $, $\p_{x_1} a_{12}$, $\p_{x_1}^2 a_{12}$, $ a_2$ and $\p_{x_1}^2 a_2$.
\end{lemma}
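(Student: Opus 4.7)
The plan is to repeat the energy argument used to derive estimate \eqref{aH25} in Lemma \ref{aH2}, but with one additional $x_1$-differentiation. Setting $W_2 = \p_{x_1}^2 \Psi^{\si}$ and differentiating equation \eqref{aw1} for $W_1 = \p_{x_1}\Psi^{\si}$ once more in $x_1$ yields a third-order-in-$x_1$ mixed-type equation of the same form,
\begin{equation*}
\si \p_{x_1}^3 W_2 + a_{11}\p_{x_1}^2 W_2 + 2 a_{12} \p_{x_1 r}^2 W_2 + \p_r^2 W_2 + \tfrac{1}{r}\p_r W_2 + (\p_{x_1} a_{11} + a_3)\p_{x_1} W_2 + (2\p_{x_1}a_{12} + a_4)\p_r W_2 = G_2,
\end{equation*}
where $G_2$ collects commutator-type lower-order terms involving $\p_{x_1}^2 a_{11}, \p_{x_1}^2 a_{12}, \p_{x_1}^2 a_1, \p_{x_1}^2 a_2$ acting on $W_1$, $\p_r \Psi^{\si}$ and $\p_{x_1 r}^2 \Psi^{\si}$, together with $\p_{x_1}^2 G_0$. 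The needed boundary conditions on $W_2$ follow from those of $\Psi^{\si}$: $\p_r W_2(x_1, 0) = \p_r W_2(x_1, 1) = 0$ by commuting $\p_{x_1}^2$ past the Neumann condition $\p_r \Psi^{\si} = 0$ at $r = 0, 1$, and $W_2(L_0, r) = 0$ is exactly the supplementary boundary condition $\p_{x_1}^2 \Psi^{\si}(L_0, r) = 0$ present in \eqref{au2}.

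Next, I would introduce a cut-off $\eta \in C^{\oo}([L_0, L_2])$ with $0 \le \eta \le 1$, $\eta \equiv 1$ on $[\tfrac{5}{8}L_0, L_1 + 14l]$ and $\eta \equiv 0$ outside $[\tfrac{3}{4}L_0, L_1 + 15l]$, so that the support of $\eta'$ lies strictly inside the region $[\tfrac{7}{8}L_0, L_1 + 16l]$ where the earlier cut-off $\eta_4$ of Lemma \ref{aH2} equals $1$, and so that $\eta$ vanishes at the endpoints $x_1 = L_0, L_2$. Multiplying the $W_2$-equation by $\eta^2 d(x_1)\p_{x_1} W_2$ and integrating by parts over $D_2$ along the same lines as in the derivation of \eqref{aH25}, the structural inequalities \eqref{8}--\eqref{9} applied with $j = 2, 3$ produce a coercive principal contribution bounded below by
\begin{equation*}
\si \iint_{D_2}\eta^2|\p_{x_1}^2 W_2|^2 r\,dr\,dx_1 + \iint_{D_2}\eta^2|\na W_2|^2 r\,dr\,dx_1.
\end{equation*}
The remainder splits into three families: cutoff-induced terms supported on $\operatorname{supp}\eta'$, which are controlled by the $H^2_r$ bound \eqref{aH21} and the $\si$-weighted bound \eqref{aH25}; the contribution from $G_2$, estimated in $L^2_r$ by $\|G_0\|_{H^2_r(D_2)}$ together with the stated $C^k$-norms of the coefficients; and an analogue of the term $\iint d\,\p_r a_{11}\,\p_{x_1} V_1\,\p_{x_1}^2 \Psi^{\si}\,r\,dr\,dx_1$ that appeared in \eqref{aH26}, absorbed via the $C^1$-smallness of $a_{11} - \bar{a}_{11}$. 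The second $r$-derivative $\p_r^2\p_{x_1}\Psi^{\si} = \p_{x_1}\p_r^2\Psi^{\si}$, needed to complete $|\na \p_{x_1}^2 \Psi^{\si}|^2$ but not directly produced by the multiplier, is then recovered algebraically by solving the $W_1$-equation \eqref{aw1} for $\p_r^2 W_1$ in terms of quantities already estimated.

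The main obstacle I anticipate is closing the estimate of the $G_2$-terms $\p_{x_1}^2 a_{12}\,\p_{x_1 r}^2 \Psi^{\si}$ and $\p_{x_1}^2 a_2\,\p_r \Psi^{\si}$: each pairs a second $x_1$-derivative of a coefficient of only limited smoothness with a second derivative of $\Psi^{\si}$ that is available only in $L^2_r$ from \eqref{aH21}. This is precisely why the hypotheses of Lemma \ref{aH3} list the $C^0(\overline{D_2})$ norms of $\p_{x_1}^2 a_{12}$ and $\p_{x_1}^2 a_2$, which supply the $L^{\oo}_r$-factor needed to close the Cauchy--Schwarz pairing. A secondary technical point is ensuring that no uncontrolled boundary contributions appear at $x_1 = L_0$ or $x_1 = L_2$ after the integration by parts, but this is entirely taken care of by the support condition on $\eta$.
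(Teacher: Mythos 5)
Your overall architecture (set $W_2=\p_{x_1}^2\Psi^{\si}$, use the multiplier $\eta^2 d(x_1)\p_{x_1}W_2$ with a cut-off equal to $1$ on $[\f58 L_0, L_1+14l]$ and supported in $[\f34 L_0, L_1+15l]$, invoke \eqref{8}--\eqref{9} for coercivity, and estimate $G_2$ using the $C^0$ norms of $\p_{x_1}^2 a_{12}$, $\p_{x_1}^2 a_2$) matches the paper's. But there is a genuine gap in how you propose to handle the cut-off error terms. After integrating by parts, the terms supported on $\operatorname{supp}\eta'$ are of the form $\iint |\eta'|^2\,|\na W_2|^2\,r\,dr\,dx_1$ (and $\iint \eta\eta' d\,(\p_r W_2)^2\,r\,dr\,dx_1$), i.e.\ they involve $|\na\p_{x_1}^2\Psi^{\si}|^2$ --- \emph{third}-order derivatives of $\Psi^{\si}$. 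You claim these are controlled by \eqref{aH21} and \eqref{aH25}, but those estimates only bound second-order derivatives: \eqref{aH21} controls $|\na^2\Psi^{\si}|^2$, and the third-derivative term in \eqref{aH25} carries the degenerate weight $\si$ and moreover only sees $\p_{x_1}^3\Psi^{\si}$, not $\p_r\p_{x_1}^2\Psi^{\si}$. So as stated your argument does not close.

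The paper's proof supplies exactly the missing ingredient before running the main energy identity: it first proves the local estimate \eqref{aH32}, namely an unweighted $L^2_r$ bound for $\na\p_{x_1}W_1=\na W_2$ on the two strips $[\f34 L_0,\f58 L_0]$ and $[L_1+14l,L_1+15l]$, which is precisely where $\eta_7'$ is supported. This is obtained with the \emph{different} multiplier $\eta_j^2\,\p_{x_1}^2 W_1$ ($j=5,6$) applied to the $W_1$-equation \eqref{aw1}; that multiplier is admissible only because $a_{11}\ge\f12\ka_*>0$ on those strips (the subsonic region on the left, and the artificially ellipticized region beyond $L_1+4l$ on the right --- this is the whole point of extending the problem to $D_2$). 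Crucially, the cut-off errors generated by \emph{that} preliminary step involve only $|\na W_1|^2$, i.e.\ second derivatives, which are controlled by Lemma \ref{aH2}. Your proposal needs this intermediate elliptic step inserted; without it the $\operatorname{supp}\eta'$ terms are uncontrolled. (A minor additional remark: the term $\iint d\,\p_r a_{11}\,\p_{x_1}V_1\,\p_{x_1}^2\Psi^{\si}\,r\,dr\,dx_1$ you cite belongs to the $V_1=\p_r\Psi^{\si}$ estimate with the multiplier $-d\p_{x_1}(\p_rV_1+\f{V_1}{r})$ and does not arise here; and the quantity $\p_r^2\p_{x_1}\Psi^{\si}$ you propose to recover algebraically is part of $\na^2\p_{x_1}\Psi^{\si}$, not of $\na\p_{x_1}^2\Psi^{\si}$, so that step is not needed for \eqref{aH31}.)
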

\begin{proof}
  Let smooth cut-off functions $0 \le \eta_j (x) \le 1$ on $[L_0, L_2]$ for $j = 5, 6$ satisfy
  \begin{equation*}
  \eta_5 (x_1) = \begin{cases}
                 0, & L_0 \le x_1 \le \f 78 L_0 \\
                 1, & \f 34 L_0 \le x_1 \le \f 58 L_0 \\
                 0, & \f 12 L_0 \le x_1 \le L_2
               \end{cases},
  \quad
  \eta_6 (x_1) = \begin{cases}
                 0, & L_0 \le x_1 \le L_1 + 13 l \\
                 1, & L_1 + 14 l \le x_1 \le L_1 + 15 l \\
                 0, & L_1 + 16 l \le x_1 \le L_2
               \end{cases}.
  \end{equation*}

Multiplying \eqref{aw1} by $\eta_j^2 \p_{x_1}^2 W_1 $ for $j = 5, 6$ respectively, integrations by parts yields
  \be\no
&& \iint_{D_2}  \eta_j^2 ( a_{11} (\p_{x_1}^2 W_1)^2 + (\p_{x_1 r}^2 W_1)^2 )r dr dx_1 \\\no
&& =
\si \iint_{D_2} \eta_j \eta_j' (\p_{x_1}^2 W_1)^2 r dr dx_1
- 2 \iint_{D_2} \eta_j^2  a_{12}  \p_{x_1 }^2 W_1 \p_{x_1 r}^2 W_1 r dr dx_1\\\no
&& \q
- \iint_{D_2} 2 \eta_j \eta_j' \p_r W_1 \p_{x_1 r}^2 W_1 r dr dx_1
+ \iint_{D_2 } \eta_j^2 \p_{x_1 }^2 W_1 (G_1 - a_3 \p_{x_1} W_1 -  a_4  \p_r W_1) r dr dx_1.
\ee
 Since the supports of $\eta_j' (x_1)$ are contained in $[\f 78 L_0, L_1 + 16l]$ for all $j = 5, 6$, the first term on the right hand side can be controlled using \eqref{aH21}. Note that on the support of $\eta_j$ for $j = 5, 6$, $a_{11} \ge \f 12 \ka_* >0$. Then there holds
 \begin{equation}\label{aH32}
   \int_{\f 34 L_0}^{\f 58 L_0} \int_0^1 |\na \p_{x_1} W_1|^2 r dr dx_1
+ \int_{L_1 + 14 l}^{L_1 + 15 l } \int_0^1 |\na \p_{x_1} W_1|^2 r dr dx_1 \le C_{\sharp} \iint_{D_2} (G_0^2 + |\na G_0|^2 ) r dr dx_1,
 \end{equation}
where $C_{\sharp}$ depends only on the $C^1 (\overline{D_2})$ norms of $a_{11}, a_{12}$ and $a_1, a_2$.

Set $W_2 = \p_{x_1} W_1$. Then $W_2$ satisfies
\begin{equation}\label{aw2}
\begin{cases}
  \si \p_{x_1}^3 W_2 + a_{11} \p_{x_1}^2 W_2 + 2 a_{12} \p_{x_1 r}^2 W_2  + \p_{r}^2 W_2  + \f 1r \p_r W_2 + a_6 \p_{x_1} W_2 + a_7 \p_r W_2 = G_2, &  (x_1, r) \in D_2, \\
  W_2 (L_0, r) = 0, & \forall r \in [0, 1], \\
  \p_r W_2 (x_1, 0) = \p_r W_2 (x_1, 1) = 0, & \forall x_1 \in [L_0, L_2],
\end{cases}
\end{equation}
where
\be\no
&& a_6 =  \p_{x_1} a_{11} + a_3 = 2 \p_{x_1} a_{11} + a_1, \q
a_7 = 2 \p_{x_1} a_{12} + a_4 = 4 \p_{x_1} a_{12} +  a_2 , \\\no
&& G_2 = \p_{x_1} G_1 - \p_{x_1} a_3 \p_{x_1} W_1 - \p_{x_1} a_4 \p_r W_1  \\\no
&&  = \p_{x_1 }^2 G_0 - \p_{x_1}^2 a_2 \p_r \Psi  - \p_{x_1}^2 a_1 W_1  - 2 ( \p_{x_1}^2 a_{12} + \p_{x_1} a_2) \p_r W_1 - (\p_{x_1 }^2 a_{11} + 2 \p_{x_1} a_1) W_2
.
\ee

Define a smooth cut-off function $0 \le \eta_7 (x_1) \le 1$ on $[L_0, L_2]$ satisfying
\begin{equation*}
\eta_7 (x_1) = \begin{cases}
               0, & L_0 \le x_1 \le \f 34 L_0, \\
               1, & \f 58 L_0 \le x_1 \le L_1 + 14 l, \\
               0, & L_1 + 15 l \le x_1 \le L_2.
             \end{cases}
\end{equation*}

Multiplying \eqref{aw2} by $\eta_7^2 d \p_{x_1} W_2$ and integrating over $D_2$, integrations by parts yield that
 \be\no
 && - \si \iint_{D_2} \eta_7^2 d (\p_{x_1}^2 W_2)^2 r dr dx_1
 + \iint_{D_2} (\eta_7^2 d a_6 - \f 12  \p_{x_1} (\eta_7^2 d a_{11}) - \eta_7^2 d \p_r a_{12} - \eta_7^2 d \f {a_{12}}r ) (\p_{x_1} W_2)^2 r dr dx_1 \\\label{aw200}
 && \q
 + \iint_{D_2} ( \eta_7 \eta_7' d + 3 \eta_7^2 ) (\p_r W_2)^2 r dr dx_1
 + \iint_{D_2} \eta_7^2 d a_7  \p_{x_1} W_2 \p_r W_2 r dr dx_1 \\\no
 &&  =
  \iint_{D_2} \eta_7^2 d \p_{x_1} W_2 G_2 r dr dx_1
  + \si \iint_{D_2}  \p_{x_1} (\eta_7^2 d) \p_{x_1} W_2 \p_{x_1}^2 W_2 r dr dx_1.
 \ee
Then there holds
\begin{equation}\label{aH33}
\si \iint_{D_2} \eta_7^2 |\p_{x_1}^2 W_2|^2 r dr dx_1
+ \iint_{D_2} \eta_7^2 |\na W_2|^2 r dr dx_1
 \le C_* \iint_{D_2} ( \eta_7^2 G_2^2 + |\eta_7'|^2 |\na W_2|^2 ) r dr dx_1,
\end{equation}
where we used \eqref{8}-\eqref{9} for $j = 2$, and \eqref{coe2}.
Since the support of $\eta_7' (x_1)$ is contained in $[\f 34 L_0, \f 58 L_0] \cup [L_1 + 14 l, L_1 + 15 l]$, the term $\iint_{D_2} |\eta_7' (x_1)|^2 |\na W_2|^2 r dr dx_1 $ can be controlled by \eqref{aH32}. Also one has
\be\no
&& \| \eta_7 G_2 \|_{L^2_r (D_2) } 
\le
C_* \| G_0 \|_{H^2_r (D_2)}
+ \| \p_{x_1}^2 a_2 \|_{L^2_r (D_2)} \| \f 1r \p_r \Psi \|_{L^{\oo}_r (D_2)}
+ \| \p_{x_1}^2 a_1 \|_{L^4_r (D_2)} \| W_1 \|_{L^4_r (D_2)}
 \\\no
&& \quad
 + \| \p_{x_1}^2 a_{12} + \p_{x_1} a_2 \|_{L^{\oo}_r (D_2)}  \| \p_r W_1 \|_{L^2_r (D_2)}
 + \| \p_{x_1}^2 a_{11} + \p_{x_1} a_1 \|_{L^{\oo}_r (D_2)} \| \eta_7 W_2 \|_{L^2_r (D_2)}  \\\label{aH35}
&& \le
     C_* \| G_0 \|_{H^2_r (D_2) } + C_{\sharp} \| G_0 \|_{H^1_r (D_2)} + \| a_1\|_{H^3_r (D_2)} \| \p_{x_1} \Psi \|_{H^1_r (D_2)} \le C_{\sharp} \| G_0 \|_{H^2_r (D_2) } .
\ee
Then \eqref{aH31} can be derived from \eqref{aH33}-\eqref{aH35}.
\end{proof}

The higher order estimate for $\na \p_{x_1}^3 \Psi $ on the subregion $(\f 38 L_0, L_1 + 12l) \times (0, 1)$ is as follows.
\begin{lemma}\label{aH4}
Under the assumptions in Lemma \ref{aH2}, the classical solution to \eqref{au2} satisfies
\begin{equation}\label{aH41}
\si \int_{\f 38 L_0}^{L_1 + 12 l} \int_0^1 |\p_{x_1}^5 \Psi^{\si}|^2 r dr dx_1
+ \int_{\f 38 L_0}^{L_1 + 12 l} \int_0^1 |\na \p_{x_1}^3 \Psi^{\si}|^2 r dr dx_1
\le C_{\sharp}  \| G_0 \|_{H^3_r (D_2)}^2 ,
\end{equation}
 where $C_\sharp$ depends only on the $C^3 (\overline{D_2})$ norms of $a_{11}$, $a_1$, and the $C^2 (\overline{D_2})$ norms of $\p_r a_{12}$, $\f {a_{12}}r$, $\p_r a_2$, $\f {a_2}r$ and the $C^0 (\overline{D_2})$ norms of $ \p_{x_1}^{|\al|} a_{12} $ and $\p_{x_1}^{|\al|} a_2$ with $|\al| = 0, \cdots, 3$.

\end{lemma}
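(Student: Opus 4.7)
The plan is to follow the scheme of Lemma \ref{aH3} one derivative order higher by writing down and analyzing the equation for $W_3 \co \p_{x_1} W_2 = \p_{x_1}^3 \Psi^{\si}$. Differentiating \eqref{aw2} once in $x_1$ and absorbing the commutator contributions into the first-order part gives
\begin{equation*}
\si \p_{x_1}^3 W_3 + a_{11}\p_{x_1}^2 W_3 + 2a_{12}\p_{x_1 r}^2 W_3 + \p_r^2 W_3 + \f 1r \p_r W_3 + a_8\p_{x_1} W_3 + a_9\p_r W_3 = G_3,
\end{equation*}
with $a_8 = a_6 + \p_{x_1} a_{11} = 3\p_{x_1}a_{11} + a_1$, $a_9 = a_7 + 2\p_{x_1} a_{12} = 6\p_{x_1} a_{12} + a_2$, and
\begin{equation*}
G_3 = \p_{x_1} G_2 - \p_{x_1} a_6\, \p_{x_1} W_2 - \p_{x_1} a_7\, \p_r W_2.
\end{equation*}
The crucial structural observation is that \eqref{8}-\eqref{9} applied with $j = 3$ produce exactly the sign conditions $(\bar{a}_1 + 3\bar{a}_{11}')d - \tfrac12(\bar{a}_{11} d)' \ge 4$ and $2\bar{a}_1 + 5\bar{a}_{11}' \le - \ka_*$ tailored to this new equation, which is why the extension of the coefficients was arranged to cover this range of $j$.

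Second, I would establish an auxiliary strip estimate for $\na W_3$ on two narrow subintervals of $[\f 58 L_0, L_1+14l]$, the region on which Lemma \ref{aH3} already controls $\na W_2$. Concretely, pick smooth cut-offs $\eta_8, \eta_9$ with plateaux covering $[\f 58 L_0, \f 38 L_0]$ and $[L_1+12l, L_1+14l]$ respectively, and supports staying in regions where $a_{11}$ is uniformly positive (the former sits inside the subsonic region, the latter inside $[L_1+4l, L_2]$ where $\bar{a}_{11}\equiv 1$). Multiplying the $W_2$-equation \eqref{aw2} by $\eta_j^2 \p_{x_1}^2 W_2$ and integrating by parts exactly as in the derivation of \eqref{aH32}, the positivity of $a_{11}$ on $\mathrm{supp}\,\eta_j$ produces on the left $\iint \eta_j^2\bigl(a_{11}(\p_{x_1} W_3)^2 + (\p_r W_3)^2\bigr) r\,dr\,dx_1$, while the right-hand side is controlled by $\|G_0\|_{H^3_r(D_2)}^2$ together with the already-established bounds of Lemmas \ref{aH2} and \ref{aH3}.

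The main estimate is then obtained by selecting a third cut-off $\eta_{10}$ equal to $1$ on $[\f 38 L_0, L_1+12l]$, supported in $[\f 58 L_0, L_1+14l]$ and with $\eta_{10}'$ supported inside the two strips above, multiplying the $W_3$-equation by $\eta_{10}^2 d(x_1)\p_{x_1} W_3$, and integrating by parts as in \eqref{aw200}. The property \eqref{9} for $j = 3$ gives a pointwise lower bound $\ge 3$ for the coefficient of $(\p_{x_1}W_3)^2$ once the $\eps+\de_0$ perturbations in \eqref{coe2} are absorbed; the small $a_9$ cross-term is handled by Cauchy-Schwarz; the $\si$-boundary contribution is controlled as in Lemma \ref{aH3}; and the $|\eta_{10}'|^2|\na W_3|^2$ terms are absorbed by the strip estimate just derived. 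The hard part will be bounding $\|G_3\|_{L^2_r(D_2)}$, since $\p_{x_1}G_2$ contains $\p_{x_1}^3 G_0$, $\p_{x_1}^3 a_2\,\p_r \Psi^{\si}$, $\p_{x_1}^3 a_1\,W_1$, $2(\p_{x_1}^3 a_{12}+\p_{x_1}^2 a_2)\,\p_r W_1$ and $(\p_{x_1}^3 a_{11}+2\p_{x_1}^2 a_1)\,W_2$; each of these must be absorbed using the assumed $C^0(\overline{D_2})$ bounds on $\p_{x_1}^{|\al|}a_{12}$ and $\p_{x_1}^{|\al|}a_2$ for $|\al|\le 3$, the $C^3(\overline{D_2})$ bounds on $a_{11}$ and $a_1$, the $H^3_r$ control of $\Psi^{\si}$ up through $\na\p_{x_1}^2 \Psi^{\si}$ from Lemma \ref{aH3}, and the Sobolev embeddings $H^2_r(D_2)\hookrightarrow L^\oo_r$ and $H^1_r(D_2)\hookrightarrow L^4_r$ for the $L^4$--$L^4$ pairings. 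This careful bookkeeping is the main technical obstacle; once completed, \eqref{aH41} follows.
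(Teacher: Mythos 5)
Your proposal follows the paper's proof essentially verbatim: the same $W_3$-equation with $a_8=3\p_{x_1}a_{11}+a_1$, $a_9=6\p_{x_1}a_{12}+a_2$ and the same expanded $G_3$, the same two auxiliary strip estimates obtained from the $W_2$-equation with multiplier $\eta_j^2\p_{x_1}^2W_2$ on regions where $a_{11}$ is uniformly positive, the same main multiplier $\eta_{10}^2 d\,\p_{x_1}W_3$ exploiting \eqref{8}--\eqref{9} at $j=3$, and the same $L^{\oo}$--$L^2$ treatment of $\|\eta_{10}G_3\|_{L^2_r}$ using \eqref{aH31} to control $\|\eta_{10}\na W_2\|_{L^2_r}$. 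The only quibble is that your plateaux for $\eta_8,\eta_9$ are slightly too wide: a plateau of $\eta_8$ reaching down to $\f 58 L_0$ puts part of $\mathrm{supp}\,\eta_8'$ outside $[\f 58 L_0, L_1+14l]$, where \eqref{aH31} no longer controls $\si\iint\eta_8\eta_8'(\p_{x_1}^2W_2)^2\,r\,dr\,dx_1$ --- this is fixed by shrinking the plateaux to $[\f 12 L_0,\f 38 L_0]$ and $[L_1+12l,L_1+13l]$ (and correspondingly narrowing $\mathrm{supp}\,\eta_{10}$), as the paper does.
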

\begin{proof}
  Define smooth cut-off functions $0 \le \eta_j (x_1) \le 1$ on $[L_0, L_2]$ for $j = 8, 9$ such that
 \begin{equation*}
  \eta_8 (x_1) = \begin{cases}
                 0, & L_0 \le x_1 \le \f 58 L_0 \\
                 1, & \f 12 L_0 \le x_1 \le \f 38 L_0 \\
                 0, & \f 14 L_0 \le x_1 \le L_2
               \end{cases},
  \q
  \eta_9 (x_1) = \begin{cases}
                 0, & L_0 \le x_1 \le L_1 + 11l  \\
                 1, & L_1 + 12 l \le x_1 \le L_1 + 13 l \\
                 0, & L_1 + 14 l \le x_1 \le L_2
               \end{cases}.
  \end{equation*}

  Multiplying \eqref{aw2} by $\eta_j^2 \p_{x_1}^2 W_2$ for $j = 8, 9$ respectively, integrations by parts yield that
 \be\no
&&
 \iint_{D_2} \eta_j^2 ( a_{11} (\p_{x_1}^2 W_2)^2 + (\p_{x_1 r}^2 W_2)^2) r dr dx_1 = \si \iint_{D_2} \eta_j \eta_j' (\p_{x_1}^2 W_2)^2 r dr dx_1\\\no
&  & \q
 - 2 \iint \eta_j^2 a_{12} \p_{x_1}^2 W_2 \p_{x_1 r}^2 W_2 r dr dx_1
 - \iint_{D_2} \eta_j^2  a_7  \p_{x_1}^2 W_2 \p_r W_2 r dr dx_1\\\no
&& \q
 - \iint_{D_2} 2 \eta_j \eta_j' \p_r W_2 \p_{x_1 r}^2 W_2 r dr dx_1
+ \iint_{D_2} \eta_j^2 ( G_2 - a_6 \p_{x_1} W_2 ) \p_{x_1 }^2 W_2 r dr dx_1 .
\ee
Since the supports of $\eta_j'$ are contained in $[\f 58 L_0, L_1 + 14 l]$, the first term on the right hand side can be controlled using \eqref{aH31}. Then one obtains
\begin{equation}\label{aH42}
 \int^{\f 38 L_0}_{\f 12 L_0} \int_0^1 |\na \p_{x_1} W_2|^2 r dr dx_1 + \int_{L_1 + 12 l}^{L_1 + 13 l} \int_0^1 |\na \p_{x_1} W_2|^2 r dr dx_1
\le C_{\sharp} \| G_0 \|^2_{H^2_r (D_2)}.
\end{equation}

Set $W_3 = \p_{x_1} W_2$. Then $W_3$ satisfies
\begin{equation}\label{aw3}
\begin{cases}
  \si \p_{x_1}^3 W_3 + a_{11} \p_{x_1}^2 W_3 + 2 a_{12} \p_{x_1 r}^2 W_3 + \p_{r}^2 W_3 + \f 1r \p_r W_3 + a_8 \p_{x_1} W_3 + a_9 \p_r W_3 = G_3,  \\
  \p_r W_3 (x_1, 0) = \p_r W_3 (x_1, 1) = 0, \q \forall x_1 \in [L_0, L_2],
\end{cases}\end{equation}
where
\be\no
&& a_8 = \p_{x_1} a_{11} + a_6 = 3 \p_{x_1} a_{11} + a_1 , \q
a_9 = 2 \p_{x_1} a_{12} + a_7 = 6 \p_{x_1} a_{12} + a_2, \\\no
&& G_3 = \p_{x_1}^3 G_0 - \p_{x_1}^3 a_2 \p_r \Psi - \p_{x_1}^3 a_1 W_1
- 3(2\p_{x_1}^2 a_{12} + \p_{x_1} a_2) \p_r W_2
- (\p_{x_1}^3 a_{11} + 3 \p_{x_1 }^2 a_1) W_2\\\no
&& \q - 3 (\p_{x_1}^2 a_{11} + \p_{x_1} a_1) W_3
- (2 \p_{x_1}^3 a_{12} + 3 \p_{x_1 }^2 a_2) \p_r W_1.
\ee
Define a smooth cut-off function $0 \le \eta_{10} (x_1) \le 1$ on $[L_0, L_2]$ such that
\begin{equation*}
\eta_{10} (x_1) = \begin{cases}
                  0, & L_0 \le x_1  \le \f 12 L_0, \\
                  1, & \f 38 L_0 \le x_1 \le L_1 + 12 l, \\
                  0, & L_1 + 13 l \le x_1 \le L_2.
                \end{cases}
\end{equation*}

Multiplying \eqref{aw3} by $\eta_{10}^2 d \p_{x_1} W_3$, integrating over $\Om_2$ 
and using \eqref{8}-\eqref{9} for $j =3$, one gets
\begin{equation*}
\si \iint_{D_2} \eta_{10}^2 (\p_{x_1}^2 W_3)^2 r dr dx_1 + \iint_{D_2} \eta_{10}^2 |\na W_3|^2 r dr dx_1
\le C_* \iint_{D_2} ( \eta_{10}^2 G_3^2 + |\eta_{10}'| |\na W_3|^2 ) r dr dx_1.
\end{equation*}
The term $\iint_{D_2} |\eta_{10}'| |\na W_3|^2 r dr dx_1 $ can be controlled utilizing \eqref{aH42}, since the support of $\eta_{10}'$ is contained in $[\f 12 L_0, \f 38 L_0] \cup [L_1 + 12 l, L_1 + 13 l]$. The estimate of $\| \eta_{10} G_3 \|_{L^2}$ is as follows
\be\no
&& \| \eta_{10} G_3 \|_{L^2_r (D_2)}
\le  \| G_0  \|_{H^3_r (D_2)}
+ \| \p_{x_1}^3 a_2 \|_{L^{\oo}_r (D_2)} \| \eta_{10} \p_r \Psi\|_{L^2_r (D_2) }
+ \| \p_{x_1}^3 a_1 \|_{L^{\oo}_r (D_2)} \|\eta_{10} W_1 \|_{L^2_r (D_2)}
\\\no
&& \q
+ \| 3(2\p_{x_1 }^2 a_{12} + \p_{x_1} a_2) \|_{L^{\oo}_r (D_2)} \|\eta_{10} \p_r W_2 \|_{L^2_r (D_2) }
+ \| \p_{x_1}^3 a_{11} + 3 \p_{x_1 }^2 a_1 \|_{L^{\oo}_r (D_2)} \| \eta_{10} W_2 \|_{L^2_r (D_2)}
 \\\no
&& \q
+ \| 3 (\p_{x_1 }^2 a_{11} + \p_{x_1} a_1)\|_{L^{\oo}_r (D_2)}  \| \eta_{10 } W_3 \|_{L^2_r (D_2)}
+ \| 2 \p_{x_1}^3 a_{12} + 3 \p_{x_1}^2 a_2\|_{L^{\oo}_r (D_2)} \|\eta_{10} \p_r W_1  \|_{L^2_r (D_2)} \\\no
&& \le   C_{\sharp} \| G_0  \|_{H^3_r (D_2)}.
\ee
Here we use \eqref{aH31} to control $\| \eta_{10} \na W_2 \|_{L^2_r (D_2)}$ since the support of $\eta_{10}$ is contained in $(\f 58 L_0, L_1 + 14 l)$. Therefore, we have demonstrated the estimate \eqref{aH41}.
\end{proof}

Now we could improve the regularity of the solution $\psi$ to \eqref{linearized2} to be $H^4_r (D)$.
\begin{lemma}\label{H4first}
  The $H^2_r (D) $ strong solution $\psi$ to \eqref{linearized2} indeed belongs to $H^4_r (D)$ with
  \begin{equation}\label{aH400}
  \| \psi \|_{H^4_r (D)}  \le C_{\sharp} ( \| F \|_{H^3_r (D )} + \| \mc{ F}  \|_{H^2_r (D)}),
  \end{equation}
 where $C_\sharp$ depends only on the $C^3 (\overline{D_2})$ norms of $a_{11}$, $a_1$, and the $C^2 (\overline{D_2})$ norms of $\p_r a_{12}$, $\f {a_{12}}r$, $\p_r a_2$, $\f {a_2}r$ and the $C^0 (\overline{D_2})$ norms of $ \p_{x_1}^{|\al|} a_{12} $ and $\p_{x_1}^{|\al|} a_2$ with $|\al| = 0, \cdots, 3$.

\end{lemma}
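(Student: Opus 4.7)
The plan is to patch together the full $H^4_r$ estimate on the subsonic strip $D^- := [L_0, \f 38 L_0]\times[0,1]$, already supplied by Lemma \ref{H3}, with the $\na\p_{x_1}^2\psi$ and $\na\p_{x_1}^3\psi$ estimates on the transonic strip $D^+ := [\f 38 L_0, L_1]\times[0,1]$ obtained via the auxiliary problem \eqref{au1}. The identification $\psi \equiv \Psi|_D$ from Lemma \ref{equality} transfers Lemmas \ref{aH3}--\ref{aH4} to $\psi$, and the boundedness of the extension $\mc{E}$ together with the structure of $G_0$ in \eqref{coe2} bounds $\|G_0\|_{H^3_r(D_2)}$ by $C_\sharp(\|F\|_{H^3_r(D)} + \|\mc{F}\|_{H^2_r(D)})$. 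This accounts on $D^+$ for every fourth-order term in the $H^4_r$ norm involving at least two $x_1$-differentiations; what remains is to estimate $\p_{x_1}^2\p_r^2\psi$, $\p_{x_1}\p_r^3\psi$, $\p_r^4\psi$ and the weighted quantities $\na^2(\f 1r\p_r\psi)$ and $\f 1r\p_r(\f 1r\p_r\psi)$ on $D^+$.

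For this, I would recast \eqref{linearized2} as
\begin{equation*}
\mc{L}_r\psi = g_0, \qquad \mc{L}_r := \p_r^2 + \f 1r\p_r, \qquad g_0 := F_0 - k_{11}\p_{x_1}^2\psi - 2k_{12}\p_{x_1 r}^2\psi - k_1\p_{x_1}\psi - k_2\p_r\psi,
\end{equation*}
and exploit that $\mc{L}_r$, having no $x_1$-dependence, commutes with $\p_{x_1}$, so $\mc{L}_r(\p_{x_1}^j\psi) = \p_{x_1}^j g_0$ for each $j = 0, 1, 2$. Each $\p_{x_1}^j\psi$ inherits the homogeneous Neumann condition $\p_r(\p_{x_1}^j\psi)(x_1, 0) = \p_r(\p_{x_1}^j\psi)(x_1, 1) = 0$ by differentiating the boundary condition in $x_1$, and the axis compatibility $\p_r^3(\p_{x_1}^j\psi)(x_1, 0) = 0$ from \eqref{c1}. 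Lifting to Cartesian coordinates via $\check v(x_1, x') := (\p_{x_1}^j\psi)(x_1, |x'|)$ converts $\mc{L}_r$ into the transverse Laplacian $\Delta_{x'}$ on the unit disc, so that $\Delta_{x'}\check v(x_1, x') = (\p_{x_1}^j g_0)(x_1, |x'|)$ with Neumann data on $|x'| = 1$ and the required smoothness across the origin. Standard $H^{k+2}$ elliptic regularity for $\Delta_{x'}$ on the disc, applied slicewise in $x_1$ and integrated, combined with the norm equivalence \eqref{equiv} of the Appendix, relates the $H^{k+2}_r$-norm of $\p_{x_1}^j\psi$ on $D^+$ to the $H^k_r$-norm of $\p_{x_1}^j g_0$.

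I would then bootstrap in decreasing $j$: at $j = 2$, Lemma \ref{coe-estimate} and the estimates of the first paragraph control $\|\p_{x_1}^2 g_0\|_{L^2_r(D^+)}$, and the Cartesian $H^2$ regularity delivers $\p_r^2\p_{x_1}^2\psi$ and $\f 1r\p_r\p_{x_1}^2\psi$ in $L^2_r(D^+)$; feeding these back, the $H^1_r$-control of $\p_{x_1}g_0$ then gives at $j = 1$ the $H^3$-bound, from which $\p_r^3\p_{x_1}\psi$ and $\na(\f 1r\p_r\p_{x_1}\psi)$ follow; finally the $H^2_r$-control of $g_0$ at $j = 0$ yields $\p_r^4\psi$, $\na^2(\f 1r\p_r\psi)$ and $\f 1r\p_r(\f 1r\p_r\psi)$. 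Summing the contributions on $D^-$ and $D^+$ gives \eqref{aH400}. The main delicacy will be to verify that every weighted $\f 1r\p_r$-term in the $H^4_r$ norm corresponds, through \eqref{equiv}, to a genuine derivative of the Cartesian lift $\check\psi$, with the inherited compatibility \eqref{c1} ruling out any artificial singularity on the axis $r = 0$.
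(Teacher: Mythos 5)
Your proposal follows essentially the same route as the paper: transfer the $\na\p_{x_1}^2\Psi$, $\na\p_{x_1}^3\Psi$ bounds from the auxiliary problem to $\psi$ via Lemma \ref{equality}, combine with Lemma \ref{H3} on the subsonic strip, and then recover the remaining radial and weighted derivatives from the identity $\p_r^2\psi+\f1r\p_r\psi=H$ and its $\p_{x_1}^j$-derivatives by lifting to Cartesian coordinates and applying slicewise elliptic regularity on the disc together with the norm equivalence \eqref{equiv}. The only cosmetic difference is that the paper treats the $j=1,2$ cases by direct $L^2_r$ manipulation of $\p_{x_1}^jH$ before invoking the Cartesian lift for the purely radial fourth-order terms, whereas you use the lift uniformly for all $j$; both hinge on the same commutation of $\p_r^2+\f1r\p_r$ with $\p_{x_1}$.
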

\begin{proof}
  For the finite approximation $\Psi^{N, \si} (x_1, r)= \sum_{j =1}^N A_j^{N, \si } (x_1) b_j (r)$, one can use the same arguments as in Lemma \ref{exist1} to obtain the estimates in Lemma \ref{aH2}, \ref{aH3} and \ref{aH4}. These estimates are uniformly in $N$. Thus one can extract a weakly convergent subsequence whose weak limit coincides with the $H^2_r $ strong solution $\Psi^{\si}$ to \eqref{au2} due to the uniqueness, and the following estimate holds
  \begin{equation}\label{aH401}
  \| \Psi^{\si} \|_{H^2_r (D_2)}^2 +  \int_{\f 38 L_0}^{L_1 + 12 l}  \int_0^1 ( |\na \p_{x_1}^2 \Psi^{\si } |^2 + |\na \p_{x_1}^3 \Psi^{\si}|^2 ) r dr dx_1
  \le C_{\sharp}  \| G_0  \|_{H^3_r (D_2)}^2  \le C_{\sharp} \|F_0\|_{H^3_r (D )}^2.
  \end{equation}
  Since the estimate \eqref{aH401} is uniformly in $\si$, so there exists a subsequence $\{ \Psi^{\si_j} \}_{j =1}^{\oo}$ which weakly converges to a function $\widetilde{\Psi}$ with the estimate
  \begin{equation}\label{aH402}
  \| \widetilde{\Psi} \|_{H^2_r (D_2)}^2
  + \int_{\f 38 L_0}^{L_1 + 12 l} \int_0^1 ( |\na \p_{x_1 }^2 \widetilde{\Psi}  |^2 + |\na \p_{x_1}^3 \widetilde{\Psi} |^2 ) r dr dx_1 \le C_{\sharp} \|F_0\|_{H^3_r (D )}^2.
  \end{equation}
  Due to the uniqueness of the solution to \eqref{au1}, the function $\widetilde{\Psi}$ coincides with the solution $\Psi$ constructed in Lemma \ref{equality}. Furthermore, Lemma \ref{equality} implies that $\Psi = \psi$ in $D $ and thus
  \begin{equation*}
  \int_{\f 38 L_0}^{L_1} \int_0^1 (|\na \p_{x_1}^2 \psi|^2 + |\na \p_{x_1}^3 \psi|^2 ) r dr dx_1 \le C_{\sharp} \|F_0\|_{H^3_r (D )}^2.
  \end{equation*}
  This, together with \eqref{H31}, yields that
  \begin{equation}\label{aH404}
  \| \na \p_{x_1}^2 \psi \|^2_{L^2_r (D)} + \| \na \p_{x_1}^3 \psi \|^2_{L^2_r (D)} \le C_{\sharp} \|F_0\|_{H^3_r (D )}^2.
  \end{equation}
  Since the following equality holds almost everywhere
  \begin{equation}\label{aH45}
   \begin{cases}
   \p_{r}^2 \psi + \f 1r \p_r \psi = F_0 - k_{11} \p_{x_1 }^2 \psi - 2 k_{12} \p_{x_1 r}^2 \psi - k_1 \p_{x_1} \psi - k_2 \p_r \psi \eqq H, \\
   \p_r \psi (x_1, 1) = 0,
   \end{cases}
  \end{equation}
  then
  \begin{equation}\label{aH405}
  \iint_D ( |\p_{x_1} \p_r^2 \psi |^2 + |\p_{x_1} (\f 1r \p_r \psi)|^2 ) r dr dx_1 \le \iint_D |\p_{x_1} H|^2 r dr dx_1 
   \le C_{\sharp} \|F_0\|_{H^3_r (D )}^2,
  \end{equation}
  \begin{equation}\label{aH406}
   \iint_D (|\p_{x_1}^2 \p_r^2 \psi |^2 + |\p_{x_1}^2 (\f 1r \p_r \psi)|^2) r dr dx_1 \le \iint_D |\p_{x_1}^2 H |^2 r dr dx_1 
   \le
   C_{\sharp} \|F_0\|_{H^3_r (D )}^2.
  \end{equation}

  Note that $\check{\psi} (x_1, x') = \psi (x_1, r)$ solves
  \begin{equation}\label{aH46}
  \begin{cases}
  \sum_{i = 2}^3 \p_{x_i}^2 \check{\psi } = \check{H} (x_1, x'),  \q \forall x' \in \{ x' \in \mbR^2, |x'| < 1 \}, \\
  \sum_{i = 2}^3 x_i \p_{x_i} \check{\psi } (x_1, x') = 0, \q  \text{on } |x'| = 1,
  \end{cases}
  \end{equation}
  where $\check{H} (x_1, x') = H (x_1, r)$. There holds
  \begin{equation*}
  \iint_{|x'| < 1} |\na^2_{x'} \check{\psi}|^2 + |\na^3_{x'} \check{\psi}|^2 d x' \le c_* \iint_{|x'| < 1} |\check{H}|^2 + |\na_{x'} \check{H}|^2 dx'
  = c_* \int_0^1 ( |H|^2 + |\p_r H|^2 ) r dr .
  \end{equation*}
  Integrating the above inequality with respect to $x_1$ on $[L_0,L_1]$, one gets
  \begin{equation}\label{aH34}
   \iint_D (|\p_r^3 \psi|^2 + |\p_r (\f 1r \p_r \psi)|^2) r dr dx_1 \le c_* \iint_D (|H|^2 + |\p_r H|^2) r dr dx_1 
   \le C_{\sharp} \|F_0\|_{H^3_r (D )}^2.
  \end{equation}

  Denote $\check{w}_1 (x_1, x') = \p_{x_1} \check{\psi} (x_1, x')$, then $\check{w}_1$ solves
  \begin{equation*}
  \begin{cases}
  \sum_{i = 2}^3 \p_{x_i}^2 \check{w}_1 = \p_{x_1} \check{H} (x_1, x'),  \q \forall x' \in \{ x' \in \mbR^2, |x'| < 1 \}, \\
  \sum_{i = 2}^3 x_i \p_{x_i} \check{w}_1 (x_1, x') = 0, \q  \text{on } |x'| = 1.
  \end{cases}
  \end{equation*}
  Therefore,
  \begin{equation*}
   \iint_{|x'| < 1} (|\na_{x'}^2 \check{w}_1|^2 + |\na^3_{x'} \check{w}_1|^2) d x' \le c_* \iint_{|x'|< 1} (|\p_{x_1} \check{H}|^2 + |\na_{x'} \p_{x_1} \check{H}|^2) dx' .
  \end{equation*}
  Same as \eqref{aH34}, there holds
  \begin{equation}\label{aH407}
   \iint_D (|\p_r^2 \p_{x_1} \psi|^2 + |\p_r^3 \p_{x_1} \psi|^2 + |\p_{x_1 r}^2 (\f 1r \p_r \psi )|^2) r dr dx_1 \le  c_* \iint_D (|\p_{x_1} H|^2 + |\p_{x_1 r}^2 H|^2) r dr dx_1
  \le  C_{\sharp} \|F_0\|_{H^3_r (D )}^2.
  \end{equation}

  It follows from \eqref{aH46} that
  \be\no
  && \iint_D (|\p_r^4 \psi|^2+|\p_r(\frac1r \p_r)\psi|^2 + |\frac1r \p_r(\frac1r \p_r\psi)|^2)rdr dx_1\\\no
  && \leq\int_{L_0}^{L_1} \iint_{|x'| < 1} ( |\na_{x'}^2 \check{\psi }|^2 +  |\na_{x'}^3 \check{\psi }|^2 + |\na_{x'}^4 \check{\psi }|^2 ) d x' d x_1 \\\no
  && \le c_*  \int_{L_0}^{L_1} \iint_{|x'| < 1} ( |\check{H}|^2 + |\na_{x'} \check{H }|^2 + |\na_{x'}^2 \check{H }|^2 ) d x' d x_1 \\\no
  && \le c_* \iint_D (|H (x_1, r)|^2 + |\p_r H (x_1, r)|^2 + |\p_r^2 H (x_1, r)|^2 + |\f 1r \p_r H (x_1, r)|^2) r dr dx_1 \\\label{aH47}
  && \le C_{\sharp} (\| F \|_{H^3_r (D)}^2 + \| \mc{F} \|_{H^2_r (D)}^2).
  \ee
  The estimate \eqref{aH400} follows from \eqref{aH404}, \eqref{aH405}, \eqref{aH406}, \eqref{aH34}, \eqref{aH407} and \eqref{aH47}.
\end{proof}

Finally, we prove that the constant $C_{\sharp}$ in \eqref{aH400} can be replaced by a constant $C_*$ which depends only on the $H^3_r (D )$ norms of $k_{11}$, $k_1$ and the $H^2_r (D)$ norms of $\p_r k_{12}$, $\f {k_{12}}r$, $\p_r k_2$, $\f {k_2}r$, the $L^{\oo}_r (D)$ norms of $k_{12}$, $\p_{x_1} k_{12}$, $k_2$ and $\p_{x_1} k_2$, the $L^4_r (D)$ norm of $\p_{x_1}^2 k_{12}$ and the $L^2_r (D)$ norms of $\na \p_{x_1}^2 k_{12}$, $\p_{x_1}^2 k_2$ and $\na \p_{x_1}^2 k_2$.

\begin{lemma}\label{H4}
  There exists a constant $\de_* > 0$ depending only on the background flow, such that if $0 < \de_0 \le \de_*$ in Lemma \ref{coe-estimate}, the solution to \eqref{linearized2} satisfies the compatibility condition
  \begin{equation}\label{cp3}
  \p_r^3 \psi (x_1, 0)  = 0 \text{ in the sense of $H^1_r (D )$ trace, }
  \end{equation}
  and the estimate
  \begin{equation}\label{f3}
  \| \psi \|_{H^4_r (D )}  \le C_* ( \| F\|_{H^3_r (D )} + \|  \mc{F}  \|_{H^2_r (D)}),
  \end{equation}
  where $C_*$ depends only on the $H^3_r (D )$ norms of $k_{11}$, $k_1$ and the $H^2_r (D)$ norms of $\p_r k_{12}$, $\f {k_{12}}r$, $\p_r k_2$, $\f {k_2}r$, the $L^{\oo}_r (D)$ norms of $k_{12}$, $\p_{x_1} k_{12}$, $k_2$ and $\p_{x_1} k_2$, the $L^4_r (D)$ norm of $\p_{x_1}^2 k_{12}$ and the $L^2_r (D)$ norms of $\na \p_{x_1}^2 k_{12}$, $\p_{x_1}^2 k_2$ and $\na \p_{x_1}^2 k_2$.

\end{lemma}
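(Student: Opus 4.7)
The plan is to combine a density approximation of the coefficients with a careful refinement of the $H^4_r(D)$ estimate established in Lemma \ref{H4first}, so that only the weaker norms listed in the statement of Lemma \ref{H4} appear in the final bound. The argument has four steps.

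\textbf{Step 1 (Approximation).} I would invoke the density construction described immediately after Lemma \ref{coe-estimate}: approximate $\hat\psi+\eps\psi_0$ by a sequence of $C^4(\overline D)$ functions so that the associated coefficients $k_{11}^{(n)}$, $k_1^{(n)}$ converge to $k_{11}$, $k_1$ in $H^3_r(D)$ and $\f{k_{12}^{(n)}}{r}$, $\p_r k_{12}^{(n)}$, $\f{k_2^{(n)}}{r}$, $\p_r k_2^{(n)}$ converge in $H^2_r(D)$, all while preserving the compatibility conditions \eqref{coe15}--\eqref{coe18}; arrange also $F_0^{(n)}\to F_0$ in $H^3_r(D)$. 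For each $n$ the smooth coefficients meet the hypotheses of Lemma \ref{H4first}, which produces a solution $\psi^{(n)}\in H^4_r(D)$ of the approximating linear problem.

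\textbf{Step 2 (Refinement).} I would revisit each estimate in the proofs of Lemmas \ref{H3}, \ref{aH2}, \ref{aH3}, \ref{aH4}, \ref{H4first} and close it using only the norms listed in the statement of Lemma \ref{H4}. The lower-order multiplications are immediate, since $\|\p_r k_{11}\|_{L^\infty_r}+\|\p_r k_1\|_{L^\infty_r}\le C(\|k_{11}\|_{H^3_r}+\|k_1\|_{H^3_r})$ by Sobolev embedding. The critical terms sit in the source $G_2$ of \eqref{aw2} (used in \eqref{aH35}) and in $G_3$ from the proof of Lemma \ref{aH4}; for these I pair via H\"older as
\begin{equation*}
\|\p_{x_1}^2 k_{12}\,\p_{x_1r}^2\psi\|_{L^2_r}\le \|\p_{x_1}^2 k_{12}\|_{L^4_r}\|\p_{x_1r}^2\psi\|_{L^4_r},
\end{equation*}
absorbing $\|\p_{x_1r}^2\psi\|_{L^4_r}$ into the $H^3_r$ bound on $\psi$ already obtained at a lower induction level. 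The terms containing $\na\p_{x_1}^2 k_{12}$, $\p_{x_1}^2 k_2$ and $\na\p_{x_1}^2 k_2$ are handled by $L^2_r$--$L^\infty_r$ pairings against $\p_r\psi$, $\p_rW_1$ and $W_2$, whose $L^\infty_r$ norms come from the lower-order $H^3_r$ control on $\psi$ via Sobolev embedding. All remaining multiplications reduce to those already dealt with in Lemmas \ref{aH2}--\ref{aH4}.

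\textbf{Step 3 (Passage to the limit).} The refinement gives a uniform bound
\begin{equation*}
\|\psi^{(n)}\|_{H^4_r(D)}\le C_*\bigl(\|F^{(n)}\|_{H^3_r(D)}+\|\mathcal{F}^{(n)}\|_{H^2_r(D)}\bigr)
\end{equation*}
with $C_*$ independent of $n$ and depending only on the norms in the statement. By weak compactness in $H^4_r(D)$ and the uniqueness of the $H^2_r$ strong solution from Lemma \ref{exist2}, a subsequence of $\psi^{(n)}$ converges weakly in $H^4_r(D)$ to the solution $\psi$ of \eqref{linearized2}, and \eqref{f3} follows by lower semicontinuity. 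For the compatibility condition \eqref{cp3}, once $\psi\in H^4_r(D)$ the norm equivalence \eqref{equiv} in the appendix gives $\check\psi(x_1,x')\co\psi(x_1,|x'|)\in H^4([L_0,L_1]\times\{|x'|<1\})$; axisymmetry in $x'$ forces a Taylor expansion in $r^2$ at the axis, so every odd-order radial derivative vanishes at $r=0$, yielding $\p_r^3\psi(x_1,0)=0$ in the $H^1_r(D)$ trace sense. The same conclusion can be seen by differentiating \eqref{linearized2} in $r$, evaluating at $r=0$, and using the compatibility relations of Lemma \ref{coe-estimate} together with \eqref{c1}.

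The main obstacle will be Step 2: each high-order coefficient derivative must be paired with the correct H\"older/Sobolev exponent on the $\psi$ factor so that nothing outside the listed norms appears. This is exactly what forces the precise list of coefficient norms in the hypothesis of Lemma \ref{H4}: the $L^4_r$ bound on $\p_{x_1}^2 k_{12}$ and the $L^2_r$ bounds on $\na\p_{x_1}^2 k_{12}$, $\p_{x_1}^2 k_2$ and $\na\p_{x_1}^2 k_2$ are the borderline cases in which no further interpolation can reduce the coefficient norm needed.
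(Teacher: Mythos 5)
Your overall architecture (establish $\psi\in H^4_r(D)$ qualitatively first, then re-derive the estimate with the weaker constant) matches the paper's, but the two essential steps are not actually carried out, and as proposed they would not close. First, Step 2 is the heart of the lemma and is asserted rather than proven. The paper does \emph{not} revisit Lemmas \ref{aH2}--\ref{aH4} and \ref{H4first}: those deliberately keep the constant $C_{\sharp}$ (depending on $C^3(\overline{D_2})$ norms of the coefficients), because in the cutoff hierarchy on $D_2$ the source terms $G_2$, $G_3$ contain products such as $\p_{x_1}^3 a_{12}\,\p_r W_1$ and $(\p_{x_1}^2 a_{11}+\p_{x_1}a_1)W_3$ in which, at that stage of the induction, the $\Psi$-factor is only in $H^1_r\subset L^4_r$ while the coefficient derivative is only in $L^2_r$ under the listed hypotheses --- so your H\"older pairings do not close without invoking $\|\psi\|_{H^4_r(D)}$ on the right and absorbing it via the smallness $C_0(\eps+\de_0)$ from \eqref{coe10}--\eqref{coe9}, neither of which you mention. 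The paper's actual mechanism is different and cleaner: since $\psi\in H^4_r(D)$ is already known, $\tilde w_1=\eta_{11}\p_{x_1}\psi$ and $\tilde w_2=\eta_{11}\p_{x_1}^2\psi$ are strong $H^2_r$ solutions of problems of the same form as \eqref{linearized2}; after verifying the sign conditions $2k_3\pm\p_{x_1}k_{11}\le-\ka_*$ and $2k_5\pm\p_{x_1}k_{11}\le-\ka_*$ (a step you omit entirely, yet without it the multiplier method of Lemma \ref{exist2} does not apply to the differentiated equations), Lemma \ref{exist2} gives the $H^2_r$ bounds for $\tilde w_1$, $\tilde w_2$ with the good constant $C_*$, the sources $\tilde F_1$, $\tilde F_2$ are estimated in $H^1_r$ using exactly the listed norms, the remaining radial derivatives are recovered from the identity \eqref{aH45}, and the $(\eps+\de_0)\|\psi\|_{H^4_r(D)}$ terms are absorbed by choosing $\de_*$ small.

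Second, your treatment of \eqref{cp3} has a genuine gap. The claim that axisymmetry of an $H^4$ function forces a Taylor expansion in $r^2$, hence $\p_r^3\psi(x_1,0)=0$, is not justified at this regularity: $H^4(\Om)\hookrightarrow C^{2,1/2}(\overline\Om)$ in three dimensions, so $\p_r^3\psi$ exists only as a trace and the parity argument does not apply directly. Your fallback --- differentiating \eqref{linearized2} in $r$ and evaluating at $r=0$ --- requires pointwise third derivatives near the axis; the paper obtains this only in the elliptic region (giving \eqref{c1}), while on $(\f18 L_0,L_1)$ it must first assume $C^4$/$C^3$ coefficients, upgrade to an $H^5_r$ bound on a subdomain so that $\psi\in C^{3,\f12}$, evaluate the differentiated equation at $r=0$, and only then conclude the general case by density. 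Your proposal does not address the transonic region at all.
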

\begin{proof}
  Given that $\psi \in H^4_r (D)$, then $w_1 = \p_{x_1} \psi $ satisfies the following equation almost everywhere
  \begin{equation}\label{f1}
  \begin{cases}
    \mc{L}_1 w_1 \co k_{11} \p_{x_1}^2 w_1  + 2 k_{12} \p_{x_1 r}^2 w_1  + \p_{r}^2 w_1 + \f 1r \p_r w_1 + k_3 \p_{x_1} w_1 + k_4 \p_r w_1 = F_1,  \\
    \p_r w_1 (x_1, 0) = \p_r w_1 (x_1, 1) = 0, \q \forall x_1 \in [L_0, L_1],
  \end{cases}
  \end{equation}
where
\begin{equation*}
k_3 =  \p_{x_1} k_{11}  + k_1 , \q
k_4 = 2 \p_{x_1} k_{12}  + k_2 ,\q F_1 =  \p_{x_1} F_0 -  \p_{x_1} k_1 w_1 - \p_{x_1} k_2 \p_r \psi.
\end{equation*}
Define a monotone increasing smooth cut-off function $\eta_{11}$ on $[L_0, L_1]$ such that $0 \le \eta_{11} (x_1) \le 1$ and
\begin{equation*}
\eta_{11} (x_1) = \begin{cases}
             0, & L_0 \le x_1 \le \f 34 L_0, \\
             1, & \f 12 L_0 \le x_1 \le L_1.
           \end{cases}
\end{equation*}
Then $\tilde{w}_1 = \eta_{11} w_1$ would satisfy
\begin{equation}\label{tw1}
\begin{cases}
  \mc{L}_1 \tilde{w}_1 \co k_{11} \p_{x_1}^2 \tilde{w}_1 + 2 k_{12} \p_{x_1 r}^2 \tilde{w}_1 + \p_{r}^2 \tilde{w}_1 + \f 1r \p_r \tilde{w}_1 + k_3 \p_{x_1} \tilde{w}_1 + k_4 \p_r \tilde{w}_1 = \tilde{F}_1, \\
  \p_r \tilde{w}_1 (x_1, 0) =  \p_r \tilde{w}_1 (x_1, 1) = 0, \q \forall x_1 \in [L_0, L_1], \\
  \tilde{w}_1 (L_0, r) = 0, \q \forall r \in [0, 1],
\end{cases}
\end{equation}
where
\begin{equation*}
\tilde{F}_1 = \eta_{11} \p_{x_1} F_0 - \eta_{11} \p_{x_1} k_1 w_1 - \eta_{11} \p_{x_1} k_2 \p_r \psi + k_{11} (\eta_{11}'' w_1 + 2 \eta_{11}' \p_{x_1} w_1) + 2 \eta_{11}' k_{12} \p_r w_1 + \eta_{11}' (\p_{x_1} k_{11} + k_1) w_1.
\end{equation*}

Note that if $0 < \de_0 \le \de_*$ in Lemma \ref{coe-estimate}, then we have for any $ (x_1, r) \in D$,
  \be\no
  && 2 k_3 - \p_{x_1} k_{11} = \p_{x_1} k_{11} + 2 k_1 \le 2 \bar{k}_1 + \bar{k}_{11}' + 2 \| k_1 - \bar{k}_1 \|_{L^{\oo}} + \| \p_{x_1} k_{11} - \bar{k}_{11}' \|_{L^{\oo}} \le - \ka_* < 0 ,\\\no
  && 2 k_3 + \p_{x_1} k_{11} = 3 \p_{x_1} k_{11} + 2 k_1 \le 2 \bar{k}_1 + 3 \bar{k}_{11}' + 2 \| k_1 - \bar{k}_1 \|_{L^{\oo}} + 3 \| \p_{x_1} k_{11} - \bar{k}_{11}' \|_{L^{\oo}} \le - \ka_* < 0.
  \ee
As in Lemma \ref{exist2}, there exists a unique strong solution $v_1 \in H^2_r (D)$ to \eqref{tw1} with the estimate
  \begin{equation*}
  \| v_1 \|_{H^2_r (D )} \le C_* \| \tilde{F}_1 \|_{H^1_r (D)}.
  \end{equation*}
  The uniqueness implies that $v_1 = \tilde{w}_1$ holds a.e. in $\Om$. Thus
  \begin{equation*}
   \b( \int_{\f 12 L_0}^{L_1} \int_0^1 |\na^2 w_1|^2 r dr dx_1  \b)^{\f 12} + \b( \int_{\f 12 L_0}^{L_1} \int_0^1 |\f {\p_r w_1}r|^2 r dr dx_1 \b)^{\f 12}
   \le   \| \tilde{w}_1 \|_{H^2_r (D) }
   \le  C_* \| \tilde{F}_1 \|_{H^1_r (D) },
  \end{equation*}
  and
  \be\no
   && \| \tilde{F}_1 \|_{H^1_r (D) }  \le  C_* (\| \p_{x_1} F_0 \|_{H^1_r (D)} + \| \p_{x_1} k_1 \|_{H^2_r (D)} \| \eta_{11} w_1 \|_{H^1_r (D)} + \| k_{11}\|_{H^2_r (D)} (\| \eta_{11}'' w_1 +\eta_{11}' \p_{x_1} w_1 \|_{H^1_r (D)})  \\\no
    && \q
    + \| \p_{x_1} (\f {k_2}r) \|_{H^1_r (D)} \| \eta_{11} r \p_r \psi \|_{H^2_r (D)}+ \| \f {k_{12}}r \|_{H^2_r (D)} \| \eta_{11}' r \p_r w_1 \|_{H^1_r (D)}
    + \| \p_{x_1} k_{11} +k_{1}\|_{H^2_r (D)} \| \eta_{11}' w_1 \|_{H^1_r (D)} ) \\\no
  && \le   C_* ( \| F_0 \|_{H^2_r (D)} + \| \psi \|_{H^3_r ((\f 34 L_0, \f 12 L_0) \times (0, 1))} + ( \eps + \de_0) \| \psi \|_{H^3_r (D)} )
   \le C_* \| F_0 \|_{H^2_r (D)} + C_* (\eps + \de_0) \| \psi \|_{H^3_r (D)},
  \ee
  where we use the fact that $\textrm{supp } \eta_{11}' \subseteq (\f 34 L_0, \f 12 L_0)$.
  Together with \eqref{H31}, one can infer that
  \begin{equation}\label{f15}
  \| \na^2 \p_{x_1} \psi \|_{L^2_r (D )} + \| \p_{x_1} (\f 1r \p_r \psi) \|_{L^2_r (D)} \le C_* \| F_0 \|_{H^2_r (D)} + C_* (\eps + \de_0) \| \psi \|_{H^3_r (D)}.
  \end{equation}
  Using \eqref{aH45}, we conclude that
  \be\no
  && \|\p_r^3 \psi\|_{L^2_r (D)} + \|\p_r (\f 1r \p_r \psi)\|_{L^2_r (D)} \le C_* \|\p_r H\|_{L^2_r (D)} \\\no
  &&
  \le  C_* \| \p_r F_0 \|_{L^2_r (D)} + C_* ( \| \p_r k_{11} \|_{L^{\oo}_r (D)} + \| \p_r k_{12} \|_{L^{\oo}_r (D)} + \| k_1 \|_{L^{\oo}_r (D)}   + \| k_2 \|_{L^{\oo}_r (D)}  ) \| \na^2 \psi \|_{L^2_r (D)} \\\no
  && \q
   + C_* ( \| \p_r k_2 \|_{L^{\oo}_r (D)} + \| \p_r k_1 \|_{L^{\oo}_r (D)}  ) \| \na \psi \|_{L^2_r (D)}
  + C_* ( \| k_{11} \|_{L^{\oo}_r (D)}  +  \| k_{12} \|_{L^{\oo}_r (D)} ) \| \na^2 \p_{x_1} \psi \|_{L^2_r (D)}\\\no
  && \le C_* \| F_0\|_{H^2_r (D)} + C_* (\eps + \de_0) \| \psi \|_{H^3_r (D)}.
  \ee
  This, together with \eqref{f15}, gives
  \begin{equation*}
  \| \psi \|_{H^3_r (D)}  \le C_* \| F_0 \|_{H^2_r (D)} + C_* (\eps + \de_0) \| \psi \|_{H^3_r (D)}.
  \end{equation*}
  Choosing $\eps+\delta_0$ small enough so that $C_* (\eps + \de_0)\leq \frac{1}{2}$, then
  \begin{equation}\label{f17}
  \| \psi \|_{H^3_r (D)}  \le C_* \| F_0 \|_{H^2_r (D)}.
  \end{equation}

  Set $w_2 = \p_{x_1} w_1$, it satisfies the following equation almost everywhere
  \begin{equation*}
  \begin{cases}
    \mc{L}_2 w_2 \co k_{11} \p_{x_1}^2 w_2 + 2 k_{12} \p_{x_1 r}^2 w_2 + \p_{r}^2 w_2 + \f 1r \p_r w_2 + k_5 \p_{x_1} w_2 + k_6 \p_r w_2 = F_2,  \\
    \p_r w_2 (x_1, 0) = \p_r w_2 (x_1, 1) = 0,  \q \forall x_1 \in [L_0, L_1],
  \end{cases}
  \end{equation*}
  where
  \be\no
&& k_5 = \p_{x_1} k_{11} + k_3 = 2 \p_{x_1} k_{11} + k_1 , \q
k_6 = 2 \p_{x_1} k_{12} + k_4 = 4 \p_{x_1} k_{12} + k_2  , \\\no
&& F_2 = \p_{x_1 }^2 F_0 - (\p_{x_1 }^2 k_{11} + 2 \p_{x_1} k_1 ) w_2
- (2 \p_{x_1}^2 k_{12} + 2 \p_{x_1 } k_2) \p_r w_1
- \p_{x_1 }^2 k_1 w_1 - \p_{x_1}^2 k_2 \p_r \psi.
\ee

Set $\tilde{w}_2 = \eta_{11} w_2$, then $\tilde{w}_2$ satisfies
\begin{equation}\label{f21}
\begin{cases}
  \mc{L}_2 \tilde{w}_2 = k_{11} \p_{x_1}^2 \tilde{w}_2 + 2 k_{12} \p_{x_1 r}^2 \tilde{w}_2 + \p_{r}^2 \tilde{w}_2 + \f 1r \p_r \tilde{w}_2 + k_5 \p_{x_1} \tilde{w}_2 + k_6 \p_r \tilde{w}_2 = \tilde{F}_2 ,\\
  \p_r \tilde{w}_2 (x_1, 0) = \p_r \tilde{w}_2 (x_1 , 1) = 0, \q \forall x_1  \in [L_0, L_1], \\
  \tilde{w}_2 (L_0, r) = 0, \q \forall r \in [0, 1],
\end{cases}
\end{equation}
where $\tilde{F}_2 = \eta_{11} F_2 + (k_{11}  \eta_{11}'' + k_5 \eta_{11}') w_2 + 2 k_{11} \eta_{11}' \p_{x_1} w_2 + 2 k_{12} \eta_{11}' \p_r w_2$.

Note that if $0 < \de_0 \le \de_*$ in Lemma \ref{coe-estimate}, then for any $ (x_1, r) \in D $, one gains
\be\no
&& 2 k_5 - \p_{x_1} k_{11} = 2 k_1 + 3 \p_{x_1} k_{11} \le -\ka_* < 0, \\\no
&& 2 k_5 + \p_{x_1} k_{11} = k_1 + 5 \p_{x_1} k_{11} \le - \ka_* < 0.
\ee
Then as in Lemma \ref{exist2}, $\tilde{w}_2$ is the unique $H^2_r (D) $ strong solution to \eqref{f21} with the estimate
 \be\no
\b(\int_{\f 12 L_0}^{L_1} \int_0^1 (|w_2|^2 + |\na w_2|^2 + |\na^2 w_2|^2 ) r dr dx_1 \b)^{\f 12 }
+ \b(\int_{\f 12 L_0}^{L_1} \int_0^1 |\f {\p_r w_2}r|^2 r dr dx_1 \b)^{\f 12}
 \le  \| \tilde{w}_2 \|_{H^2_r (D )} \le  C_* \| \tilde{F}_2 \|_{H^1_r (D )} ,
 \ee
and
\be\no
&& \| \tilde{F}_2 \|_{H^1_r (D )}  \le
C_* \b(
\|F_0\|_{H^3_r (D)} + \| (\p_{x_1}^2 k_{11} - \bar{k}_{11}'') + 2 (\p_{x_1} k_1 - \bar{k}_1') \|_{H^1_r (D)} \| w_2 \|_{H^2_r (D)} + \|(\bar{k}_{11}'' + 2 \bar{k}_1') w_2 \|_{H^1_r (D)} \\\no
&& \q
+ \|  \p_{x_1} ( \f {k_2}r ) \|_{H^1_r (D)} \| r \p_r w_1 \|_{H^2_r (D)}
+ ( \| \p_{x_1}^2 k_{12} \|_{L^2_r (D)} + \| \na \p_{x_1}^2 k_{12} \|_{L^2_r (D)} ) \| \p_r w_1 \|_{L^{\oo}_r (D)}
\\\no
&& \q
+ \| \p_{x_1}^2 k_{12} \|_{L^4_r (D)} \| \na \p_r w_1 \|_{L^4_r (D)}
+ \| \p_{x_1}^2 k_1 - \bar{k}_1'' \|_{H^1_r (D)} \| w_1 \|_{H^2_r (D)}
+ \| \bar{k}_1'' w_1 \|_{H^1_r (D)} + \| \f {k_{12}}r  \|_{H^2_r (D)} \| \eta_{11}' r \p_r w_2 \|_{H^1_r (D)}
 \\\no
&& \q
+ \|  \p_{x_1}^2 k_2 \|_{L^2_r (D)} (\| \p_r \psi \|_{L^{\oo}_r (D)} + \| \na \p_r \psi \|_{L^{\oo}_r (D)}) + \| \na \p_{x_1}^2 k_2 \|_{L^2_r (D)} \| \p_r \psi \|_{L^{\oo}_r (D)}
+ ( \|  k_1 - \bar{k}_1 \|_{H^2_r (D)}  \\\no
&& \q
+  \| \p_{x_1} k_{11} - \bar{k}_{11}'\|_{H^2_r (D)} )  \| \eta_{11}' w_2 \|_{H^1_r (D)}
+ \| ( \bar{k}_1 +  \bar{k}_{11}' )\eta_{11}' w_2 \|_{H^1_r (D)} + \| k_{11} - \bar{k}_{11} \|_{H^2_r (D)} \| \eta_{11}'' w_2 \|_{H^1_r (D)}
\\\no
&& \q + \| \bar{k}_{11}  \eta_{11}'' w_2 \|_{H^1_r (D)} +  \| k_{11} - \bar{k}_{11} \|_{H^2_r (D)} \| \eta_{11}' \p_{x_1} w_2 \|_{H^1_r (D)} + \| \bar{k}_{11} \eta_{11}' \p_{x_1} w_2 \|_{H^1_r (D)}\b)\\\no
&&
 \le C_* (\|F_0\|_{H^3_r (D)} + \| \psi \|_{H^3_r (D)} + (\eps + \de_0) \| \psi \|_{H^4_r (D)}).
\ee
Combining with \eqref{H31} and  \eqref{f17} deduce that
\begin{equation}\label{f22}
\| \psi \|_{H^3_r (D)} + \| \na^2 \p_{x_1}^2 \psi \|_{L^2_r (D)}  + \| \p_{x_1}^2 (\f 1r \p_r \psi) \|_{L^2_r (D)}
\le C_* (\|F_0\|_{H^3_r (D)} + (\eps + \de_0) \| \psi \|_{H^4_r (D)}).
\end{equation}
Again, it follows from \eqref{aH45} that
\be\label{f23}
 && \| \p_{x_1} \p_r^3 \psi \|_{L^2_r (D)} + \| \p_{x_1 r}^2 (\f 1r \p_r \psi) \|_{L^2_r (D)} \le C_* \| \p_{x_1 r}^2 H \|_{L^2_r (D)} \\\no
 && \le C_* \{ \| \p_{x_1 r}^2 F_0 \|_{L^2_r (D)} + ( \| k_{11} \|_{L^{\oo}_r (D)} + \| k_{12} \|_{L^{\oo}_r (D)}) \| \na^2 \p_{x_1}^2 \psi \|_{L^2_r (D)} + (\| \p_{x_1} k_{11} \|_{L^{\oo}_r (D)} + \| \p_r k_{11} \|_{L^{\oo}_r (D)}
  \\\no
 && \q
 + \| \p_{x_1} k_{12} \|_{L^{\oo}_r (D)} + \| \p_r k_{12} \|_{L^{\oo}_r (D)} + \| k_1 \|_{L^{\oo}_r (D)} + \| k_2 \|_{L^{\oo}_r (D)} ) \| \na^3 \psi \|_{L^2_r (D)}  + \| \p_{x_1 r}^2 k_2 \|_{L^4_r (D)} \| \p_r \psi \|_{L^4_r (D)}
 \\\no
  && \q
+  (\| \p_{x_1} k_{11} \|_{L^{\oo}_r (D)} + \| \p_r k_{11} \|_{L^{\oo}_r (D)} + \| \p_{x_1} k_2 \|_{L^{\oo}_r (D)} + \| \p_r k_2 \|_{L^{\oo}_r (D)}) \| \na^2 \psi \|_{L^2_r (D)} \\\no
 && \q
 + \| \p_{x_1 r}^2 k_{11} \|_{L^4_r (D)} ( \| \p_{x_1}^2 \psi \|_{L^4_r (D)} + \| \p_{x_1} \psi \|_{L^4_r (D)} ) + \| \p_{x_1 r}^2 k_{12} \|_{L^4_r (D)} \| \p_{x_1 r}^2 \psi \|_{L^4_r (D)} \} \\\no
 &&  \le C_* \| F_0 \|_{H^2_r (D)} + C_* (\eps + \de_0) ( \|F_0\|_{H^3_r (D)}  + (\eps + \de_0) \| \psi \|_{H^4_r (D)}) + C_* (\eps + \de_0)^2 \| \psi \|_{H^2_r (D)}  \\\label{f23}
 &&
\le C_* ( \|F_0\|_{H^3_r (D)}  + (\eps + \de_0)^2 \| \psi \|_{H^4_r (D)}),
\ee
and
\be\no
&& \| \p_r^4 \psi \|_{L^2_r (D)} + \| \p_r^2 (\f 1r \p_r \psi ) \|_{L^2_r (D)} + \| \f 1r \p_r (\f 1r \p_r \psi) \|_{L^2_r (D)} \le C_* (\| \p_r^2 H \|_{L^2_r (D)} + \| \f 1r \p_r H \|_{L^2_r (D)})  \\\no
 && \le C_* \{ \| \p_r^2 F_0 \|_{L^2_r (D)} + \| \f 1r \p_r F_0 \|_{L^2_r (D)} +  \| k_{11} \|_{L^{\oo}_r (D)} \| \p_{x_1}^2 \p_r^2 \psi \|_{L^2_r (D)}  +  \| k_{12} \|_{L^{\oo}_r (D)} \| \p_{x_1} \p_r^3 \psi \|_{L^2_r (D)} \\\no
   && \q
  +  (\| \p_r k_{12} \|_{L^{\oo}_r (D)} + \| k_1 \|_{L^{\oo}_r (D)} + \| k_2 \|_{L^{\oo}_r (D)} + \| \f 1r k_{12} \|_{L^{\oo}_r (D)} +  \| \p_r k_{11} \|_{L^{\oo}_r (D)}) \| \na^3 \psi \|_{L^2_r (D)}  \\\no
   && \q
  +  (\| \p_r k_1 \|_{L^{\oo}_r (D)} + \| \p_r k_2 \|_{L^{\oo}_r (D)} + \| \f 1r k_2 \|_{L^{\oo}_r (D)}) \| \na^2 \psi \|_{L^2_r (D)}  +  \| \p_{x_1}^2 \psi \|_{L^4_r (D)} ( \| \p_r^2 k_{11} \|_{L^4_r (D)}
  \\\no
  && \q
   + \| \f 1r \p_r k_{11} \|_{L^4_r (D)} )   +  (\| \p_r^2 k_1 \|_{L^4_r (D)} + \| \f 1r \p_r k_1 \|_{L^4_r (D)}) \| \p_{x_1} \psi \|_{L^4_r (D)}  +  \| \p_r^2 k_{12} \|_{L^4_r (D)} \| \p_{x_1 r}^2 \psi \|_{L^4_r (D)}   \\\no
  && \q
  +  \| \p_r^2 k_2 \|_{L^4_r (D)} \| \p_r \psi \|_{L^4_r (D)}  + \| k_{11} \|_{L^{\oo}_r (D)} \| \p_{x_1}^2 (\f 1r \p_r \psi) \|_{L^2_r (D)}  +  \| \p_r k_2 \|_{L^{\oo}_r (D)} \| \f 1r \p_r \psi \|_{L^2_r (D)}  \\\no
  && \q
 +  (\| \p_r k_{12} \|_{L^{\oo}_r (D)} + \| k_1 \|_{L^{\oo}_r (D)}) \| \p_{x_1} (\f 1r \p_r \psi) \|_{L^2_r (D)} \}  \\\no
  && \le C_* \{ \| F_0 \|_{H^2_r (D)}  +  (\eps + \de_0) (\| \psi \|_{H^3_r (D)} + \| \na^2 \p_{x_1}^2 \psi \|_{L^2_r (D)} + \| \p_{x_1} \p_r^3 \psi \|_{L^2_r (D)} + \| \p_{x_1}^2 (\f 1r \p_r \psi) \|_{L^2_r (D)})\}
\\\label{f24}
  &&
\le C_* (\|F_0\|_{H^3_r (D)}  + (\eps + \de_0)^2 \| \psi \|_{H^4_r (D)}).
\ee
Combining with \eqref{f22} and \eqref{f23} deduce that
\begin{equation*}
\| \psi \|_{H^4_r (D)} \le C_* (\| F \|_{H^3_r (D)} + \| \mc{F} \|_{H^2_r (D)} + (\eps + \de_0) \| \psi \|_{H^4_r (D)} ).
\end{equation*}
 Let $0 < \de_0 \le \de_*$ so that $C_* ( \eps + \de_*) \le \f 12$, then \eqref{f3} follows.

It remains to prove the compatibility condition \eqref{cp3}. As a result of \eqref{c1}, it suffices to show that \eqref{cp3} holds on $(\f 18 L_0, L_1)$. Suppose $k_{11}, k_1 \in C^4 (\overline{D})$ and $\p_r k_{12}, \f {k_{12}}r, \p_r k_2, \f {k_2}r \in C^3 (\overline{D})$, then $a_{11}, a_1 \in C^4 (\overline{D_2})$ and $\p_r a_{12}, \f {a_{12}}r, \p_r a_2, \f {a_2}r \in C^3 (\overline{D_2})$. One may obtain the $L^2_r$ estimate of $\na \p_{x_1}^4 \Psi^{\si}$ on the domain $D_3 \co \{ (x_1, r) \in (\f 18 L_0, L_1 + 10 l) \times (0, 1) \} $ as demonstrated in Lemma \ref{aH4}. With that, we can derive the estimate of $\| \Psi \|_{H^5_r (D_3)}$ which implies that $\Psi \in C^{3, \f 12} (\overline{D_3})$ and $\psi \in C^{3, \f 12 } ([\f 18 L_0, L_1] \times [0, 1])$. Then $\p_r^3 \psi (x_1, 0) = 0$ for $\forall x_1 \in (\f 18 L_0, L_1)$ will follow by differentiating the equation \eqref{aH45} with respect to $r$ and evaluating at $r = 0$. The general case will follow by a density argument.
\end{proof}


We now prove Theorem \ref{irro}. For any $\hat{\psi} \in \Sigma_{\de_0}$, then Lemma \ref{coe-estimate} holds. By Lemmas \ref{exist2}, \ref{H4}, there exists a unique solution $\psi \in H^4_r (D)$ to \eqref{linearized2} with
\begin{equation*}
\| \psi \|_{H^4_r (D )} \le C_*\|F_0(\na \hat{\psi})\|_{H^3_r (D)} \le m_* \|F_0(\na \hat{\psi})\|_{H^3_r (D)}.
\end{equation*}
Here $C_*$ depends only on the $H^3_r (D )$ norms of $k_{11}$, $k_1$ and the $H^2_r (D)$ norms of $\p_r k_{12}$, $\f {k_{12}}r$, $k_2$, $\f {k_2}r$ , the $L^{\oo}_r (D)$ norms of $k_{12}$, $\p_{x_1} k_{12}$, $k_2$ and $\p_{x_1} k_2$, the $L^4_r (D)$ norm of $\p_{x_1}^2 k_{12}$ and the $L^2_r (D)$ norms of $\na \p_{x_1}^2 k_{12}$, $\p_{x_1}^2 k_2$ and $\na \p_{x_1}^2 k_2$, which can be bounded by a constant $m_*$ depends on the $C^3 ([L_0, L_1])$ norm of $\bar{k}_{11}$, $\bar{k}_1$ and the boundary data. In the following, the constant $m_*$ will always denote a constant depending only on the background solutions and the boundary data.

Recall the definition of $F_0 (\na \hat{\psi})$ in \eqref{f0}. Since the support of $\eta_0 (x_1)$ defined in \eqref{eta} is contained in $[L_0, \f 78 L_0]$, according to the $H^4_r $ estimate \eqref{H31} in Lemma \ref{H3} and the estimates obtained in Lemmas \ref{aH3}-\ref{H4}, there holds a better estimate
\be\no
&& \|  \psi \|_{H^4_r (D )}  \le  m_* ( \| F (\na \hat{\psi} + \eps \na \psi_0) \|_{H^3_r (D)} + \| \mc{F} (\na \hat{\psi}) \|_{H^2_r (D)} ) \\\no
&& \le
m_* ( \eps + \| \hat{\psi} \|_{H^4_r (D)}^2 + \eps (\| \f {h_1}r \|_{H^2_r ((0, 1)) } + \| h_1' \|_{H^2_r ((0, 1))})) \le m_* (\eps + \de_0^2).
\ee
Here only the norms $\| \f {h_1}r \|_{H^2_r ((0, 1)) }$ and $\| h_1' \|_{H^2_r ((0, 1))}$ are needed, this is the reason why introducing the cut-off function $\eta_0$ in \eqref{f0}.

Let $\de_0 = \sqrt{\epsilon}$, then if $0 < \epsilon \le \eps_0 = \min \{ \f 1{4m_*^2}, \de_*^2 \}$, we have
\begin{equation*}
\| \psi \|_{H^4_r (D )} \le m_* (\eps + \de_0^2) = 2 m_* \eps \le \de_0.
\end{equation*}
By \eqref{cp3}, $\psi \in \Sigma_{\de_0}$. Hence one can define the operator $\mc{T} \hat{\psi } = \psi $, which maps $\Sigma_{\de_0}$ to itself. Then it is easy to show that the mapping $\mc{T}$ is contractive in $H^1_r-$norm for a sufficiently small $\eps_0$, and there exists a unique $\psi \in \Sigma_{\de_0}$ such that $\mc{T} \psi = \psi$ which is the desired solution. The existence of the axisymmetric sonic front can be proved as in \cite{WX23}, so is omitted. The proof of Theorem \ref{irro} is completed. 

\section{Appendix}\label{appendix}\noindent

In the appendix, we will complete the proof of Lemma \ref{coe-estimate}. First, we give two lemmas that will be frequently used in the proof of Lemma \ref{coe-estimate}. In the following lemma, we prove that the $L^{\oo}_r $ estimates of some functions can be controlled by the $L^2_r$ norms of its derivatives up to second orders.

\begin{lemma}\label{g2infinity}
Suppose that $g$ satisfies $g$, $\na g$, $ \na^2 g \in L^2_r (D) $ with $g (x_1, 0) = 0$ for any $ x_1 \in [L_0, L_1]$, then $g \in L^{\oo}_r (D) $ with the following estimate
\begin{equation}\label{g2}
\| g  \|_{L^{\oo}_r (D)}^2 \le c_* (\| g \|_{L^2_r (D)}^2 + \| \na g \|_{L^2_r (D)}^2 + \| \na^2 g \|_{L^2_r (D)}^2 ).
\end{equation}
\end{lemma}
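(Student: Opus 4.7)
The plan is to reduce the weighted $L^{\infty}$ estimate to two successive one-dimensional Sobolev embeddings, using the boundary condition $g(x_1,0)=0$ to absorb the degenerate weight at the axis. First, for each fixed $r\in(0,1]$, the standard one-dimensional embedding $H^1([L_0,L_1])\hookrightarrow L^{\infty}([L_0,L_1])$ in the $x_1$-direction gives
\[
\sup_{x_1\in[L_0,L_1]}|g(x_1,r)|^2\le C\int_{L_0}^{L_1}\Bigl(|g(x_1,r)|^2+|\partial_{x_1}g(x_1,r)|^2\Bigr)dx_1,
\]
so that it suffices to bound $\sup_{r\in(0,1]}\int_{L_0}^{L_1}|g(x_1,r)|^2\,dx_1$ and the analogous quantity for $\partial_{x_1}g$ in terms of the right-hand side of \eqref{g2}.

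Second, for each fixed $x_1$, I would use $g(x_1,0)=0$ to write $g(x_1,r)^2=2\int_0^r g\,\partial_r g\,ds$ and apply the weighted Young inequality $2|g\,\partial_r g|\le s^{-1}g^2+s\,(\partial_r g)^2$ to produce the $r$-uniform bound
\[
|g(x_1,r)|^2\le\int_0^1\frac{g(x_1,s)^2}{s}\,ds+\int_0^1 s\,|\partial_r g(x_1,s)|^2\,ds.
\]
Integrating in $x_1$ yields $\|g/r\|_{L^2_r(D)}^2+\|\partial_r g\|_{L^2_r(D)}^2$ as an upper bound for $\sup_r\int|g(x_1,r)|^2\,dx_1$. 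The same recipe, applied to $\partial_{x_1}g$ whose axial trace also vanishes (by differentiating $g(x_1,0)=0$ in $x_1$), produces $\|\partial_{x_1}g/r\|_{L^2_r(D)}^2+\|\partial^2_{x_1 r}g\|_{L^2_r(D)}^2$, in which the second-order piece is already controlled by $\|\nabla^2 g\|_{L^2_r(D)}$.

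The main obstacle is controlling the Hardy-type quantity $\|g/r\|_{L^2_r(D)}^2$ (and its analogue for $\partial_{x_1}g$) by the right-hand side of \eqref{g2}. The naive two-dimensional Hardy inequality $\|g/r\|_{L^2_r}\le C\|\partial_r g\|_{L^2_r}$ does not hold uniformly, as one sees by testing on $g(r)=r^{\alpha}$ with $\alpha\to 0^+$ --- dimension two being critical for Hardy; one must exploit the full hypothesis $\|\nabla^2 g\|_{L^2_r}<\infty$ to prevent $g$ from behaving like a small-exponent power near the axis. A cleaner route may be to pass to Cartesian coordinates in $\mathbb{R}^3$ by setting $\check g(x_1,x'):=g(x_1,|x'|)$ on the solid cylinder $\Omega=\{(x_1,x'):L_0<x_1<L_1,\,|x'|<1\}$, where the weighted norms translate into standard Sobolev norms and $\|g\|_{L^{\infty}_r(D)}=\|\check g\|_{L^{\infty}(\Omega)}$, so that a three-dimensional Sobolev embedding applies once the axis singularity is handled via the vanishing of $\check g$ on $\{x'=0\}$.
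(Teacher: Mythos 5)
Your reduction correctly isolates the real difficulty, but as written the argument has a genuine gap that your own fallback does not close. After the weighted Young inequality with the critical weight $s^{-1}$, you need $\|g/r\|_{L^2_r(D)}$ and, for the $\partial_{x_1}g$ part, $\|\partial_{x_1}g/r\|_{L^2_r(D)}$, controlled by the right-hand side of \eqref{g2}. The first quantity can in fact be recovered (from $g(x_1,0)=0$ one gets $|g(x_1,r)|^2\le r\int_0^r|\partial_s g|^2ds$, hence $\int_0^1 g^2 r^{-1}dr\le\int_0^1|\partial_r g|^2dr$, and the unweighted integral is controlled by $\|\partial_r g\|_{L^2_r}+\|\partial_r^2 g\|_{L^2_r}$), but the same computation for $\partial_{x_1}g$ produces $\int_0^1|\partial^2_{x_1r}g|^2dr$, whose reduction to weighted norms costs a \emph{third} derivative $\partial_r^2\partial_{x_1}g$, which is not among the hypotheses. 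The Cartesian fallback fails for the same underlying reason: $\|\check g\|_{H^2(\Omega)}^2$ equals $2\pi(\|g\|_{H^1_r}^2+\|\nabla^2g\|_{L^2_r}^2+\|\tfrac1r\partial_rg\|_{L^2_r}^2)$, and the term $\|\tfrac1r\partial_rg\|_{L^2_r}$ is \emph{not} controlled by the hypotheses of the lemma — take $g(x_1,r)=r$, which satisfies every assumption while $\tfrac1r\partial_rg=\tfrac1r\notin L^2_r(D)$, i.e.\ $\check g=|x'|\notin H^2(\Omega)$. So the $H^2(\Omega)\hookrightarrow L^\infty(\Omega)$ embedding is simply not available here.

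The paper avoids the critical weight altogether. It splits $g^2(x_1,r)=2\int_0^rg\,\partial_tg\,dt\le f_1(x_1)+f_2(x_1)$ with the \emph{subcritical} weights $f_1=\int_0^1g^2r^{-1/4}dr$, $f_2=\int_0^1|\partial_rg|^2r^{1/4}dr$, and then concludes by the embedding $W^{1,1}([L_0,L_1])\hookrightarrow L^\infty([L_0,L_1])$ applied to $f_1,f_2$ (rather than your $H^1\hookrightarrow L^\infty$ in $x_1$ applied to $g$ itself). The exponent $1/4$ is chosen so that after Young's inequality $|f_1'|\le\int g^2r^{-1/2}dr+\int|\partial_{x_1}g|^2dr$, and all the resulting integrals $\int_0^1|h|^2r^{-b}dr$ with $b\in\{0,\tfrac14,\tfrac12\}$ are reduced, by an elementary mean-value/integration-by-parts trick on $[\tfrac12,1]$, to $\int_0^1(|h|^2+|\partial_rh|^2)r\,dr$ — so only the stated first and second derivatives are ever used, and $g(x_1,0)=0$ enters only once, in the pointwise splitting. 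If you want to salvage your outline, replace $s^{-1}g^2+s(\partial_rg)^2$ by $s^{-1/4}g^2+s^{1/4}(\partial_rg)^2$ and run the $W^{1,1}$ argument in $x_1$ on the two resulting functions.
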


\begin{proof}
  {\bf Step 1.} We first prove that
  \begin{equation}\label{claim}
  \iint_D (|g |^2 + |\na g |^2) dr d x_1 \leq 4 \iint_D (|g |^2 + |\na g|^2 ) r dr dx_1 + \iint_D (|\p_r g |^2 + | \na \p_r g |^2 ) r dr dx_1.
  \end{equation}

  Indeed, it follows from the mean value theorems for definite integrals, there exists $\xi \in [\f 12 , 1]$, such that $\f 12 \xi  | g (x_1, \xi)|^2 = \int_{\f 12}^1 | g (x_1, r)|^2 r  dr $. Then
  \be\no
  \int_0^{\f 12 } | g (x_1, r)|^2 dr & \le & \int_0^{\xi } | g (x_1, r)|^2 dr = \xi g^2 (x_1, \xi) - \int_0^{\xi} 2 g(x_1, r) \p_r g(x_1, r) r dr \\\no
  & \le & 2 \int_{\f 12}^1 | g (x_1, r)|^2 r dr + \int_0^1 ( ( g (x_1, r))^2 + (\p_r g (x_1, r))^2 ) r dr,
  \ee
  and
  \begin{equation*}
  \int_0^1 | g (x_1, r)|^2 dr  = \int_0^{\f 12 } | g (x_1, r)|^2 dr + \int_{\f 12 }^1 | g (x_1, r)|^2 dr
  \le 4 \int_0^1 | g (x_1, r)|^2 r dr + \int_0^1 | \p_r g (x_1, r)|^2 r dr .
  \end{equation*}
  Integrating the above inequality over $[L_0, L_1]$ with respect to $x_1$ gives
  \begin{equation*}
   \iint_D | g (x_1, r) |^2 dr dx_1 \le 4 \iint_D |  g (x_1, r)|^2 r dr dx_1 + \iint_D | \p_r g (x_1, r)|^2 r dr dx_1.
  \end{equation*}

  Similarly, one can obtain
  \begin{equation*}
  \iint_D |\na g|^2 dr dx_1 \le 5 \iint_D |\na g (x_1, r)|^2 r dr dx_1 + \iint_D | \na \p_r g (x_1, r)|^2 r dr dx_1.
  \end{equation*}
  The equation \eqref{claim} has been proven.

  {\bf Step 2.} Since for any $b \in (0, 1)$, there exists $\xi \in [\f 12 , 1]$, such that $\f 12 \xi^{1 - b} |g (x_1, \xi)|^2 = \int_{\f 12}^1 |g (x_1, r)|^2 r^{1 - b} dr $. Therefore,
  \be\no
  \int_0^{\f 12} |g (x_1, r)|^2 r^{-b} dr & \le & \int_0^{\xi} |g (x_1, r)|^2 r^{-b} dr
  = \f 1{1 - b} \xi^{1 - b} |g (x_1, \xi)|^2 - \f 2{1 - b} \int_0^{\xi} g (x_1, r) \p_r g(x_1, r) r^{1 - b} dr \\\no
  & \le &
  \f 2{1 - b} \int_{\f 12}^1 |g (x_1, r)|^2 r^{1 - b} dr + \f 1{1 - b} \int_0^1 ( |g (x_1, r)|^2 + |\p_r g (x_1, r)|^2 ) dr ,
  \ee
  and
  \be\no
  \int_0^1 |g (x_1, r)|^2 r^{-b} dr & = & \int_0^{\f 12} |g (x_1, r)|^2 r^{-b} dr + \int_{\f 12 }^1 |g (x_1, r)|^2 r^{-b} dr \\\no
  & \le & ( \f 3 {1 - b} + 2^b ) \int_0^1 |g (x_1, r)|^2 dr + \f 1 {1 - b} \int_0^1 |\p_r g (x_1, r)|^2 dr .
  \ee
  Choosing $b = \f 14$ and $b = \f 12$ in the above inequality respectively, one gets
  \be \label{g14}
  && \int_0^1 |g (x_1, r)|^2 r^{-\f 14} dr \leq 6 \int_0^1 |g (x_1, r)|^2 dr + 2 \int_0^1 |\p_r g (x_1, r)|^2 dr,\\\label{g12}
  && \int_0^1 |g (x_1, r)|^2 r^{-\f 12} dr \leq 8 \int_0^1 |g (x_1, r)|^2 dr + 2 \int_0^1 |\p_r g (x_1, r)|^2 dr.
  \ee

  {\bf Step 3.} Since $g (x_1, 0 ) = 0$ for all $x_1\in [L_0,L_1]$, then
  \begin{equation}\label{g21}
  g^2 (x_1, r) = 2 \int_0^r g (x_1, t) \p_t g (x_1, t) dt
  \le \int_0^1 |g (x_1, r)|^2 r^{- \f 14 } dr + \int_0^1 |\p_r g (x_1, r)|^2 r^{\f 14} dr .
  \end{equation}
  Define $f_1 (x_1) = \int_0^1 |g (x_1, r)|^2 r^{- \f 14 } dr$ and $ f_2 (x_1) = \int_0^1 |\p_r g (x_1, r)|^2 r^{\f 14} dr $. Then
  \begin{equation*}
  g^2 (x_1, r)  \le  f_1 (x_1) + f_2 (x_1), \ \ \forall r\in [0,1], x_1\in [L_0,L_1],
  \end{equation*}
  and
  \be\no
  &&|f_1' (x_1) |= \b| 2 \int_0^1 g (x_1, r) \p_{x_1} g (x_1, r) r^{- \f 14} dr \b|
  \le \int_0^1 |g (x_1, r)|^2 r^{- \f 12} dr + \int_0^1 |\p_{x_1} g (x_1, r)|^2  dr , \\\no
  &&|f_2' (x_1)| = \b| 2 \int_0^1 \p_r g (x_1, r) \p_{x_1 r}^2 g (x_1, r) r^{\f 14} dr \b|
  \le \int_0^1 |\p_r g (x_1, r)|^2 r^{- \f 12} dr + \int_0^1 |\p_{x_1 r}^2 g (x_1, r)|^2 r dr.
  \ee
  It follows from \eqref{claim}, \eqref{g12} and \eqref{g14} that
  \be\no
  && \int_{L_0}^{L_1} |f_1 (x_1)| dx_1 = \int_{L_0}^{L_1} \int_0^1 |g (x_1, r)|^2 r^{- \f 14 } dr d x_1 \\\no
  && \le 8 \int_{L_0}^{L_1} \int_0^1 |g (x_1, r)|^2 dr dx_1 + 2 \int_{L_0}^{L_1} \int_0^1 |\p_r g (x_1, r)|^2 dr dx_1 \\\no
  && \le C_* \int_{L_0}^{L_1} \int_0^1 (|g |^2 + |\na g |^2 + |\na^2 g |^2 ) r dr dx_1,
  \ee
  and
  \be\no
  && \int_{L_0}^{L_1} |f_1' (x_1)| dx_1
  \le \int_{L_0}^{L_1} \int_0^1 |g (x_1, r)|^2 r^{- \f 12} dr dx_1 + \int_{L_0}^{L_1} \int_0^1 |\p_{x_1} g (x_1, r)|^2  dr dx_1
  \\\no
  && \le   8  \int_{L_0}^{L_1} \int_0^1 |g (x_1, r)|^2 dr dx_1 +  \int_{L_0}^{L_1} \int_0^1 ( 2 |\p_r g (x_1, r)|^2 + |\p_{x_1} g (x_1, r)|^2 ) dr dx_1\\\no
  && \le   C_* \int_{L_0}^{L_1} \int_0^1 (|g |^2 + |\na g |^2 + |\na^2 g |^2 ) r dr dx_1.
  \ee
  One can conclude that $\| f_1 \|_{W^{1, 1} ([L_0, L_1]) }
  \le C_* (\| g \|_{L^2_r (D)}^2 + \| \na g \|_{L^2_r (D)}^2 + \| \na^2 g \|_{L^2_r (D)}^2 )$.

  It remains to prove the estimate of $\| f_2 \|_{W^{1, 1} ([L_0, L_1]) }$. Firstly, we derive that
  \be\no
  && \int_{L_0}^{L_1} |f_2 (x_1)| dx_1 =  \int_{L_0}^{L_1} \int_0^1 |\p_r g (x_1, r)|^2 r^{\f 14} dr dx_1 \\\no
  && \le  \int_{L_0}^{L_1} \int_0^1 |\p_r g (x_1, r)|^2  dr dx_1
   \le C_* \int_{L_0}^{L_1} \int_0^1 (|g |^2 + |\na g |^2 + |\na^2 g |^2 ) r dr dx_1,
  \\\no
  && \int_{L_0}^{L_1} |f_2' (x_1)| d x_1 \le \int_{L_0}^{L_1}  \int_0^1 |\p_r g (x_1, r)|^2 r^{- \f 12} dr d x_1 + \int_{L_0}^{L_1}  \int_0^1 |\p_{x_1 r}^2 g (x_1, r)|^2 r dr d x_1.
  \ee

  There exists $\eta \in [\f 12, 1]$ such that $\f 12 \eta^{\f 12} |\p_r g (x_1, \eta)|^2 = \int_{\f 12}^1 |\p_r g (x_1, r)|^2 r^{\f 12} dr $, and
  \be\no
  &&\int_0^{\f 12 } |\p_r g (x_1, r)|^2 r^{-\f 12} dr\leq \int_0^{ \eta } |\p_r g (x_1, r)|^2 r^{-\f 12} dr
  = 2 \eta^{\f 12} |\p_r g (x_1, \eta)|^2 - 4 \int_0^{\eta} \p_r g (x_1, r) \p_{r}^2 g (x_1, r) r^{\f 12} dr \\\no
  &&\leq 4 \int_{\f 12}^1 |\p_r g (x_1, r)|^2 dr + 2 \int_0^1 |\p_r g (x_1, r)|^2 + |\p_{r}^2 g (x_1, r)|^2 dr ,
  \\\no
 && \int_0^1 |\p_r g (x_1, r)|^2 r^{-\f 12} dr  = \int_0^{\f 12 } |\p_r g (x_1, r)|^2 r^{-\f 12} dr + \int_{\f 12}^1 |\p_r g (x_1, r)|^2 r^{-\f 12} dr \\\no
  & & \le  8 \int_0^1 |\p_r g (x_1, r)|^2 dr + 2 \int_0^1  |\p_{r}^2 g (x_1, r)|^2 dr .
  \ee
  Therefore,
  \begin{equation*}
   \int_{L_0}^{L_1} |f_2' (x_1)| d x_1
    \le C_* \int_{L_0}^{L_1} \int_0^1 (|g |^2 + |\na g |^2 + |\na^2 g |^2 ) r dr dx_1.
  \end{equation*}

  Then, it follows from \eqref{g21} and the estimate of $f_i$, where $i = 1, 2$, that
  \be\no
  && \| g (x_1, r) \|_{L^{\oo} ([L_0, L_1] \times [0,1])}^2 \le \sum_{i = 1}^2 \| f_i \|_{L^{\oo} ([L_0, L_1])} \le \sum_{i = 1}^2 \| f_i \|_{W^{1, 1} ([L_0, L_1]) } \\\no
  && \le C_* (\| g \|_{L^2_r (D)}^2 + \| \na g \|_{L^2_r (D)}^2 + \| \na^2 g \|_{L^2_r (D)}^2 ).
  \ee
  The inequality \eqref{g2} has been obtained.
\end{proof}

The following lemma shows that the spaces $H^2_r (D)$ and $H^3_r (D)$ are Banach algebra.
 \begin{lemma}\label{alge-est}
 Let $f$ and $ g $ be functions belonging to the space $ H^3_r (D)$, we can derive the following inequalities,
 \be\label{fgh1}
 && \| fg \|_{H^1_r (D)} \le \| f \|_{H^1_r (D)} \| g \|_{H^2_r (D)},\\\label{fgh2}
 && \| fg \|_{H^2_r (D)} \le \| f \|_{H^2_r (D)} \| g \|_{H^2_r (D)},
 \\\label{fgh3}
 &&\| fg \|_{H^3_r (D)} \le \| f \|_{H^3_r (D)} \| g \|_{H^3_r (D)}.
 \ee

 Furthermore, if there exists a positive constant $c_*$ such that $ f \ge c_* $ and $f \in H^m_r (D) (m=2,3)$, then $\f 1f \in H^m_r (D) (m=2,3)$ with the estimate
 \begin{equation}\label{frach2}
    \| \f 1 f\|_{H^m_r (D)} \leq C_m \| f \|_{H^m_r (D)}, \ \ m=2,3.
   \end{equation}
 \end{lemma}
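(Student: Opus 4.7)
The plan is to reduce the weighted estimates to the standard Sobolev multiplication theorems on three dimensional Euclidean domains via an axisymmetric identification. For $f(x_1,r)$ defined on $D$, I would set $\check f(x_1,x')\co f(x_1,|x'|)$ on the cylinder $\Om\co(L_0,L_1)\times\{x'\in\mbR^2:|x'|<1\}$. Using the chain rule identities $\p_{x_i}\check f=\f{x_i}{r}\p_r f$ for $i=2,3$ and $(\p_{x_2}^2+\p_{x_3}^2)\check f=\p_r^2 f+\f 1r\p_r f$, together with the polar integration identity $\iint_{|x'|<1}g(|x'|)\,dx'=2\pi\int_0^1 g(r)\,r\,dr$, I would first establish the norm equivalence
\begin{equation}\label{equiv}
c_m\|f\|_{H^m_r(D)}\le(2\pi)^{-1/2}\|\check f\|_{H^m(\Om)}\le C_m\|f\|_{H^m_r(D)},\q m=0,1,2,3,4,
\end{equation}
which is the identity already referenced earlier in the paper. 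The only bookkeeping requiring care, for $m=3,4$, is to express $\na(\f 1r\p_r f)$ and $\f 1r\p_r(\f 1r\p_r f)$ in terms of Cartesian derivatives of $\check f$ and to match them against the definitions in \eqref{norm}.

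With \eqref{equiv} in hand, the multiplication inequalities \eqref{fgh1}--\eqref{fgh3} reduce to their counterparts for $\check f\check g$ on $\Om\subset\mbR^3$. Since $\dim\Om=3$, the Sobolev embedding gives $H^2(\Om)\hookrightarrow L^\oo(\Om)$ and $H^1(\Om)\hookrightarrow L^4(\Om)$, hence by Hölder
\[\|\check f\check g\|_{H^1(\Om)}\le\|\check f\|_{L^4(\Om)}\|\check g\|_{L^4(\Om)}+\|\na\check f\|_{L^2(\Om)}\|\check g\|_{L^\oo(\Om)}+\|\check f\|_{L^4(\Om)}\|\na\check g\|_{L^4(\Om)}\le C\|\check f\|_{H^1(\Om)}\|\check g\|_{H^2(\Om)},\]
while for $k=2,3$ the space $H^k(\Om)$ is a Banach algebra via the Leibniz expansion combined with Hölder. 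Pulling back via \eqref{equiv} yields \eqref{fgh1}--\eqref{fgh3}.

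The reciprocal estimate \eqref{frach2} then follows from the chain rule and induction on $m$. Since $f\ge c_*>0$ on $D$, one has $\|1/f\|_{L^\oo(D)}\le 1/c_*$. The expansions
\[\na(1/f)=-\na f/f^2,\q\na^2(1/f)=-\na^2 f/f^2+2(\na f\otimes\na f)/f^3,\]
and the analogous formulas for $\na^3(1/f)$ and $\f 1r\p_r(\f 1r\p_r(1/f))$ exhibit every weighted derivative of $1/f$ as a polynomial in $\na^j f$ and $\f 1r\p_r f$ ($j\le m$) divided by a power of $f$. Each factor $1/f^k$ is uniformly bounded, and the products are controlled by the Banach algebra property just established. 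A routine induction on $m$ then gives $\|1/f\|_{H^m_r(D)}\le C_m\|f\|_{H^m_r(D)}$ for $m=2,3$.

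The main obstacle is the careful verification of \eqref{equiv} in the cases $m=3,4$: the operator $\f 1r\p_r$ does not correspond to a single Cartesian partial derivative, but rather to a specific symmetric combination (essentially the radial part of the planar Laplacian in $x'$), and one must unpack $\na^k\check f$ into a combination of $\na^j f$ and $\f 1r\p_r f$ that matches the weighted norms term by term. Once this correspondence is nailed down, the multiplication and reciprocal bounds are routine consequences of the 3D Sobolev machinery.
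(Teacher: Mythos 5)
Your proposal follows essentially the same route as the paper: both establish the norm equivalence $\|\check f\|_{H^k(\Om)}\simeq\|f\|_{H^k_r(D)}$ via the axisymmetric identification (the paper records the exact identities after "tedious calculations"), then invoke $H^1(\Om)\hookrightarrow L^4(\Om)$, $H^2(\Om)\hookrightarrow L^\oo(\Om)$ for \eqref{fgh1} and the Banach algebra property of $H^k(\Om)$, $k=2,3$, for \eqref{fgh2}--\eqref{fgh3}, with the reciprocal bound handled the same way (the paper simply says it "follows similarly," while you sketch the chain-rule expansion). The argument is correct and matches the paper's proof.
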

 \begin{proof}

Define $\check{f} (x_1, x') = f (x_1, |x'|)$ for any $(x_1, x') \in \Om $. After some tedious calculations, one can derive
\be\no
&& \| \check{f} \|_{H^1 (\Om )}^2 = 2 \pi \| f \|_{H^1_r (D)}^2, \\\no
&& \| \check{f} \|_{H^2 (\Om )}^2 = 2 \pi \b( \| f \|_{H^1_r (D )}^2 + \| \na^2 f \|_{L^2_r (D )}^2 + \b\| \f 1r \p_r f \b\|_{L^2_r (D)}^2 \b), \\\no
&& \| \check{f} \|_{H^3 (\Om )}^2 = 2 \pi \b(
\| f \|_{H^2_r (D)}^2 + \| \na^3 f \|_{L^2_r (D)}^2 + 2 \b\| \na \left(\f 1r \p_r f\right) \b\|_{L^2_r (D)}^2
\b), \\\no
&& \| \check{f} \|_{H^4 (\Om )}^2 = 2 \pi \b(
\| f \|_{H^3_r (D)}^2 + \| \na^4 f \|_{L^2_r (D)}^2 + \b\| \na^2 \left(\f 1r \p_r f\right) \b\|_{L^2_r (D)}^2 + 2 \b\| \p_{x_1 r}^2 \left(\f 1r \p_r f\right) \b\|_{L^2_r (D)}^2 \\\no
&& \q \q
+ 5 \b\| \p_r^2 \left(\f 1r \p_r f\right) \b\|_{L^2_r (D)}^2 + 9 \b\| \f 1r \p_r \left(\f 1r \p_r f\right) \b\|_{L^2_r (D)}^2
\b).
\ee
These equalities indicate that there exist positive constants $C_1$ and $C_2$ independent of $f$, such that
\begin{equation}\label{equiv}
C_1 \| f \|_{H^k_r (D)} \le \| \check{f} \|_{H^k (\Om )} \le C_2 \| f \|_{H^k_r (D)}, \q k = 0, \cdots, 4.
\end{equation}
Since $H^1(\Omega)\subset L^4(\Omega)$, $H^2(\Omega)\subset L^{\oo}(\Omega)$, one has the embedding $H^1_r (D) \subset L^4_r (D)$, $H^2_r (D) \subset L^{\oo} (D)$ and
 \be\no
 \| fg \|_{H^1_r (D)}^2 & = & \| fg \|_{L^2_r (D)}^2 + \| \na (fg) \|_{L^2_r (D)}^2
 \le \| f \|_{L^4_r (D)}^2 \| g \|_{L^4_r (D)}^2 + \| \na f \|_{L^2_r (D)}^2 \| g \|_{L^{\oo} (D)}^2 + \| f \|_{L^4_r (D)}^2 \| \na g \|_{L^4_r (D)}^2 \\\no
 & \le & C \| f \|_{H^1_r (D)}^2 \| g \|_{H^2_r (D)}^2 .
 \ee

Further, since $H^k (\Om )$ for $k = 2, 3$ are  Banach algebras, meaning that for any $\check{f}, \check{g} \in H^k (\Om )$, ($k = 2, 3$), one has
\begin{equation*}
\| \check{f} \check{g} \|_{H^k (\Om )} \le C_* \| \check{f} \|_{H^k (\Om )} \| \check{g} \|_{H^k (\Om )}, \q k = 2, 3,
\end{equation*}
combining with \eqref{equiv}, we obtain \eqref{fgh2} and \eqref{fgh3} directly. The inequality \eqref{frach2} follows similarly.

\end{proof}

\begin{proof}[Proof of Lemma \ref{coe-estimate} ]
  To simplify the notation, we denote $\hat{\psi }$ as $\psi $.

  {\bf Step 1.} We first prove the following claim.

  {\bf Claim.} Assume that $\psi \in H^4_r (D)$, then there exist constants $C_*, c_*$ depending only the background transonic flows but independent of $\psi$ such that
  \begin{enumerate}[(1)]
    \item $\| \na \psi + \eps \na \psi_0 \|_{L^{\oo}_r (D)} + \| \na^2 \psi + \eps \na^2 \psi_0  \|_{L^{\oo}_r (D)} + \| \f 1r (\p_r \psi + \eps \p_r \psi_0) \|_{H^2_r (D)} \le C_* ( \eps + \| \psi \|_{H^4_r (D)})$;
    \item $\| (\p_r \psi + \eps \p_r \psi_0)^2 \|_{H^3_r (D)} + \| (\p_{x_1} \psi + \eps \p_{x_1} \psi_0)^2 \|_{H^3_r (D)} \le C_* ( \eps + \| \psi \|_{H^4_r (D)})^2$;
    \item $  \| \p_r^2 \psi + \eps \p_r^2 \psi_0 \|_{H^2_r (D)} + \| \p_{x_1}\{(\p_r \psi + \eps \p_r \psi_0)^2\} \|_{H^2_r (D)} \le C_* ( \eps + \| \psi \|_{H^4_r (D)})$;
    \item $c^2 (\rho) - (\p_r \psi + \eps \p_r \psi_0)^2 \ge c_* > 0$, $\| c^2 (\rho) - (\p_r \psi + \eps \p_r \psi_0)^2 \|_{H^3_r (D)} \le C_* $ and
    \begin{equation*}
    \| c^2(\rho) - c^2 (\bar{\rho}) - (\p_r \psi + \eps \p_r \psi_0 )^2 \|_{H^3_r (D)} \le C_* (\eps + \| \psi \|_{H^4_r (D)});
    \end{equation*}
    \item $ \| (\p_r \psi + \eps \p_r \psi_0) \p_r (c^2 (\rho) - (\p_r \psi + \eps \p_r \psi_0)^2) \|_{H^2_r (D)} \le C_* (\eps + \| \psi \|_{H^4_r (D)})^2$.
\end{enumerate}

  Proof of the {\bf Claim.} We separately verify the above estimates. Note that $\f {h_1 (r)}r \in H^2_r ([0,1])$, $h_1' \in H^2_r ([0, 1])$ and $\eta_0 \in C^{\oo} ([L_0, L_1])$, then $\psi_0 (x_1, r) \in H^4_r (D)$ and $\f 1r \p_r (r \p_r \psi_0) \in H^2_r (D)$. Since $\psi \in H^4_r (D)$, then $\psi_1:= \psi + \eps \psi_0 \in H^4_r (D)$.
  \begin{enumerate}[(1)]
    \item It follows from Lemma \ref{g2infinity} that
    \be\no
     \| \na \psi_1 \|_{L^{\oo}_r (D)} + \| \na^2 \psi_1 \|_{L^{\oo}_r (D)} \le C_* ( \eps + \| \na \psi  \|_{L^2_r (D)} + \| \na^2 \psi \|_{L^2_r (D)}\\\no
     + \| \na^3 \psi \|_{L^2_r (D)} + \| \na^4 \psi \|_{L^2_r (D)}) \le C_* ( \eps + \| \psi \|_{H^4_r (D)}).
    \ee
    The inequality $ \| \f 1r \p_r \psi_1 \|_{H^2_r (D)} \le C_* ( \eps + \| \psi \|_{H^4_r (D)}) $ follows from the definition of $H^4_r(D)$.

    \item Since $\psi_1 \in H^4_r (D)$, it is easy to verify that $\|(\p_{x_1} \psi_1 )^2 \|_{H^3_r (D)}\leq \| \p_{x_1} \psi_1 \|_{H^3_r (D)}^2 \leq ( \eps + \| \psi \|_{H^4_r (D)})^2$. In addition,
        \be\no
    && \| (\p_r \psi_1 )^2 \|_{H^3_r (D)} \le \| (\p_r \psi_1 )^2 \|_{L^2_r (D)} + \| \na ((\p_r \psi_1 )^2) \|_{L^2_r (D)} + \| \na^2 ((\p_r \psi_1 )^2) \|_{L^2_r (D)} \\\no
    && \q
    + \| \na^3((\p_r \psi_1 )^2) \|_{L^2_r (D)}
    + \| \f 1r \p_r ((\p_r \psi_1 )^2) \|_{L^2_r (D)} + \| \na (\f 1r \p_r ((\p_r \psi_1 )^2)) \|_{L^2_r (D)} \\\no
    && \le C_* \| \p_r \psi_1 \|_{L^{\oo}_r (D)} (
    \| \p_r \psi_1 \|_{L^2_r (D)} + \| \na \p_r \psi_1 \|_{L^2_r (D)}  +  \| \na^2 \p_r \psi_1 \|_{L^2_r (D)} ) + C_* \| \na \p_r \psi_1  \|_{L^4_r (D)}
     \\\no
    && \q \times (\|\nabla \p_r \psi_1 \|_{L_r^4(D)} + \|\nabla^2 \p_r \psi_1 \|_{L^4_r(D)} )
    + C_* \| \f 1r \p_r \psi_1  \|_{L^{\oo}_r (D)} ( \| \p_r^2 \psi_1 \|_{L^2_r (D)} +  \| \na \p_r^2 \psi_1  \|_{L^2_r (D)})
    \\\no
    &&
    \q
    + C_* \| \na (\f 1r \p_r \psi_1 ) \|_{L^4_r (D)} \| \p_r^2 \psi_1 \|_{L^4_r (D)} \le C_* ( \eps + \| \psi \|_{H^4_r (D)})^2.
    \ee

    \item Note that $ \| \f 1r \p_r(\p_r^2 \psi)\|_{L^2_r (D)} = \| \p_r^2 (\f 1r \p_r \psi ) + \f 2r \p_r (\f 1r \p_r \psi ) \|_{ L^2_r (D)} \le 3\| \psi \|_{H^4_r (D)} $ and
    \begin{equation*}
     \| \f 1r \p_{r}\p_{x_1}\{(\p_r \psi)^2\} \|_{L^2_r (D)} = \| \p_{x_1} (\f 1r \p_r) \{(\p_r \psi)^2\} \|_{L^2_r (D)} \le \| (\p_r \psi)^2 \|_{H^3_r (D)}  ,
     \end{equation*}
     using (2), one gets (3).

    \item According to (2) and (3), we know that there exists a constant $c_*$, such that $c^2(\rho) - (\p_r \psi_1 )^2 = c^2 (\bar{\rho}) 
        - (\ga - 1) \bar{u} \p_{x_1} \psi_1  - \f {\ga - 1}2 (\p_{x_1} \psi_1 )^2 - \f {\ga + 1}2 (\p_r \psi_1 )^2\ge c_* >0$ and $\| c^2(\rho) - (\p_r \psi_1 )^2 \|_{H^3_r (D)} \le C_* $ as well as $\| c^2(\rho) - c^2 (\bar{\rho}) - (\p_r \psi_1 )^2 \|_{H^3_r (D)} \le C_* (\eps + \| \psi \|_{H^4_r (D)}) $.
    \item 
  Using (3) and \eqref{fgh2}, there holds
  \be\no
  && \| \p_r \psi_1 \p_r (c^2 (\rho) - (\p_r \psi_1 )^2) \|_{H^2_r (D)}
   \le C_* ( 
    \| ( \p_{x_1} \psi_1 + \bar{u} ) \p_{x_1 } \{( \p_r \psi_1 )^2 \}\|_{H^2_r (D)}
   + \| (\p_r \psi_1 )^2 ( \p_r^2 \psi_1 ) \|_{H^2_r (D)}) \\\no
  && \le
  C_* (
  \| \p_{x_1 } \{( \p_r \psi_1 )^2 \} \|_{H^2_r (D)}  + \| (\p_r \psi_1 )^2  \|_{H^2_r (D)} \| \p_r^2 \psi_1 \|_{H^2_r (D)})
   \le C_* (\eps + \| \psi \|_{H^4_r (D)})^2 .
  \ee
  \end{enumerate}

The {\bf Claim} is proved.

  {\bf Step 2.} The proof of \eqref{coe11}-\eqref{coe14}. For the convenience of writing, set $ \f 1{\Upsilon} \co c^2 (\rho) - (\p_r \psi_1 )^2$ .

  Applying Lemma \ref{alge-est} and the inequalities (2) and (4) in the {\bf Claim}, one gets
  \be\no
  && \| k_{11} (\na \psi_1 ) - \bar{k}_{11} \|_{H^3_r (D)}\\\no
  &&
  \le \| \Upsilon [(\p_r \psi_1 )^2 - (\p_{x_1} \psi_1 )^2 - 2 \bar{u } ( \p_{x_1} \psi_1 ) ]  \|_{H^3_r (D)}  + \b\| \f { \Upsilon \bar{u}^2 (c^2 (\rho) - c^2 (\bar{\rho}) - (\p_r \psi_1 )^2)}{c^2 (\bar{\rho }) } \b\|_{H^3_r (D)} \\\no
  && \le C_* \| \Upsilon \|_{H^3_r (D)} ( \| (\p_r \psi_1 )^2 \|_{H^3_r (D)} + \| \p_{x_1} \psi_1 \|_{H^3_r (D)}^2
  + \| c^2 (\rho) - c^2 (\bar{\rho}) - (\p_r \psi_1)^2 \|_{H^3_r (D)}  +
   \|  \p_{x_1} \psi_1 \|_{H^3_r (D)}
  ) \\\no
  && \leq C_*(\epsilon+\|\psi\|_{H^4_r(D)})\leq C_* (\eps + \de_0).
  \ee
  Similarly, $\| k_1 (\na \psi_1 ) - \bar{k}_1 \|_{H^3_r (D)} \le C_* (\eps + \de_0)$ can be derived, and hence \eqref{coe11}.

  We next prove \eqref{coe12}. It follows from \eqref{fgh2} and (6) in the {\bf Claim} that
  \be\no
  &&\| k_{12} (\na \psi_1) \|_{L^{\oo}_r (D)} \leq \| \f {k_{12} (\na \psi_1)}r \|_{L^{\oo}_r (D)} \le\| \f {k_{12} (\na \psi_1)}r \|_{H^2_r (D)}\\\label{k12-1}
  && \leq C_* \| \Upsilon (\bar{u} + \p_{x_1} \psi_1 ) \|_{H^2_r (D)} \| \f 1r  \p_r \psi_1  \|_{H^2_r (D)}
  \leq C_* (\eps + \|\psi\|_{H^4_r(D)}) \leq C_*(\epsilon+\de_0).
  \ee
  There also holds that
  \be\no
   && \| \p_r k_{12} \|_{H^2_r (D)} \le  C_* \| \Upsilon \p_{x_1 }\{(\p_r \psi_1 )^2\}\|_{H^2_r (D)}
    + \| \Upsilon (\bar{u} + \p_{x_1} \psi_1 ) \p_r^2 \psi_1  \|_{H^2_r (D)} \\\no
    && \q
   + \| \Upsilon^2 (\bar{u} + \p_{x_1} \psi_1  ) \p_r \psi_1 \p_r (c^2 (\rho) - (\p_r \psi_1 )^2) \|_{H^2_r (D)} \\\no
   && \le C_*
   \| \Upsilon \|_{H^2_r (D)}
   \|\p_{x_1 }\{(\p_r \psi_1 )^2\} \|_{H^2_r (D)} + \| \Upsilon (\bar{u} + \p_{x_1} \psi_1 ) \|_{H^2_r (D)} ( \|  \p_r^2 \psi_1 \|_{H^2_r (D)} \\\label{k12-3}
    && \q
    +  \| \p_r \psi_1 \p_r (c^2 (\rho) - (\p_r \psi_1 )^2) \|_{H^2_r (D)} ) \le C_* (\eps + \de_0),
\\\no
&& \| \p_{x_1} k_{12}  \|_{L^{\oo}_r (D)}
\le \| \Upsilon (\bar{u} + \p_{x_1} \psi_1 ) \|_{H^3_r (D)} (\eps + \| \p_r \psi \|_{L^{\oo}_r (D)} + \| \p_{x_1 r}^2 \psi  \|_{L^{\oo}_r (D)}) \\\label{k12-4}
&& \leq C_* ( \eps + \|\psi\|_{H^4_r(D)}) \le C_* ( \eps + \de_0).
\ee
Therefore, \eqref{coe12} can be obtained from \eqref{k12-1}-\eqref{k12-4}.

Using (1) and (6) in the {\bf Claim}, we verify that
\be\no
&& \| \p_{x_1}^2 k_{12} \|_{L^4_r (D)} \le \| \p_{x_1}^2 (\Upsilon (\bar{u} + \p_{x_1} \psi_1 )) \|_{L^4_r (D)} \| \p_r \psi_1  \|_{L^{\oo}_r (D)}  + C_*  \| \p_{x_1} (\Upsilon (\bar{u} + \p_{x_1} \psi_1 )) \|_{L^4_r (D)} \| \p_{x_1 r}^2 \psi_1  \|_{L^{\oo}_r (D)}\\\label{k12-5}
&& \q
+ \| \Upsilon (\bar{u} + \p_{x_1} \psi_1) \|_{L^{\oo}_r (D)} \| \p_{x_1}^2 \p_r \psi_1 \|_{L^4_r (D)} \leq C_*( \eps + \|\psi\|_{H^4_r(D)}) \leq C_* ( \eps + \de_0), 
\\\no
&& \| \na \p_{x_1}^2 k_{12} \|_{L^2_r (D)} \le \| \na \p_{x_1}^2 (\Upsilon (\bar{u} + \p_{x_1} \psi_1 ) ) \|_{L^2_r (D)} \| \p_r \psi_1 \|_{L^{\oo}_r (D)}
 + C_* \|  \na^2 ( \Upsilon (\bar{u} + \p_{x_1} \psi_1 )) \|_{L^2_r (D)} \| \na \p_r \psi_1 \|_{L^{\oo}_r (D)} \\\no
 && \q
 + C_* \| \na (\Upsilon (\bar{u} + \p_{x_1} \psi_1 ) ) \|_{L^4_r (D)} \| \na \p_{x_1 r}^2 \psi_1  \|_{L^4_r (D)}  + \| \Upsilon (\bar{u} + \p_{x_1} \psi_1 ) \|_{L^{\oo}_r (D)} \| \na \p_{x_1}^2 \p_r \psi_1  \|_{L^2_r (D)} \\\no
 &&
 \le C_* \| \Upsilon (\bar{u} + \p_{x_1} \psi_1 ) \|_{H^3_r (D)} (\eps + \| \p_r \psi \|_{L^{\oo}_r (D)} + \| \na \p_r \psi\|_{L^{\oo}_r (D)} + \| \na \p_{x_1 r}^2 \psi \|_{H^1_r (D)} + \| \na \p_{x_1}^2\p_r \psi \|_{L^2_r (D)} ) \\\label{k12-6}
 &&\leq C_* ( \eps + \|\psi\|_{H^4_r(D)}) \leq C_* ( \eps + \de_0).
\ee
Thus, \eqref{coe10} follows from \eqref{k12-5}-\eqref{k12-6}.

It is obviously that
 \be\no
 && \| k_2 (\na \psi_1 )\|_{L^{\oo}_r (D)}\leq \|\frac{k_2 (\na \psi_1 )}{r}\|_{L^{\oo}_r (D)}\leq\| \f {k_2 (\na \psi_1 )}r \|_{H^2_r (D)} \\\label{k2-1}
 && \le \| \Upsilon \|_{H^2_r (D)} \| \f 1r ( \p_r \psi_1 ) \|_{H^2_r (D)}^2 \le C_*  (\epsilon+\de_0)^2,\\\no
 && \| \p_{x_1} k_2 (\na \psi_1 ) \|_{L^{\oo}_r (D)} \le C_* \| \f 1r \p_r \psi_1 \|_{H^2_r (D)} ( \| \p_{x_1} \Upsilon \|_{H^2_r (D)}  \| \p_r \psi_1 \|_{L^{\oo}_r (D)} \\\label{k2-2}
 && \q + \| \Upsilon \|_{H^3_r (D)} \| \p_{x_1 r}^2 \psi_1 \|_{L^{\oo}_r (D)}) \le C_* (\epsilon+\de_0)^2,
\ee
and
\be\no
  && \| \p_r k_2 (\na \psi_1 )\|_{H^2_r (D)}
 \le
 \| \f {2 \Upsilon  \p_r \psi_1 \p_r^2 \psi_1  }{r } \|_{H^2_r (D)}
 + \| \Upsilon (\f 1r \p_r \psi_1)^2 \|_{H^2_r (D)} \\\no
 && \q
 + \| \Upsilon^2 \f { \p_r \psi_1 }r  \p_r \psi_1  \p_r (c^2 (\rho) - ( \p_r \psi_1 )^2) \|_{H^2_r (D)} \\\no
 && \le  C_* \| \Upsilon \|_{H^2_r (D)} \| \f 1r \p_r \psi_1 \|_{H^2_r (D)} \b( \| \p_r^2 \psi_1 \|_{H^2_r (D)}  + \| \f 1r \p_r \psi_1 \|_{H^2_r (D)} \\\label{k2-3}
 && \q
  + \| \p_r \psi_1 \p_r (c^2 (\rho) - ( \p_r \psi_1)^2)  \|_{H^2_r (D)} \b)
  \le C_* (\eps + \de_0)^2.
\ee
Consequently, we end the proof of \eqref{coe13}.

  Note that
  \be\no
  && \| \p_{x_1}^2 k_2 \|_{L^2_r (D)} \le C_* \| \Upsilon \|_{L^{\oo}_r (D)} ( \| \f {\p_r \psi_1 }r \|_{L^{\oo}_r (D)}  \| \p_{x_1}^2 \p_r \psi_1 \|_{L^2_r (D)}  + \| \p_{x_1 r}^2 \psi_1 \|_{L^{\oo}_r (D)} \| \p_{x_1} (\f {\p_r \psi_1 }r) \|_{L^2_r (D)} ) \\\no
  && \q
  + C_* \| \f {\p_r \psi_1 }r \|_{L^{\oo}_r (D)} ( \| \p_{x_1}\Upsilon \|_{L^{\oo}_r (D)} \| \p_{x_1 r}^2 \psi_1 \|_{L^2_r (D)} +  \|  \p_r \psi_1 \|_{L^{\oo}_r (D)} \| \p_{x_1}^2 \Upsilon \|_{L^2_r (D)} ) \\\no
  && \le C_* \| \Upsilon \|_{H^3_r (D)} ( \eps + \| \psi \|_{H^4_r (D)})^2 \le
  C_*  (\epsilon+\de_0)^2,
 \\\no
 && \| \na \p_{x_1}^2 k_2 \|_{L^2_r (D)} \le C_* \| \Upsilon \|_{L^{\oo}_r (D)} \bigg( \| \p_{x_1 r}^2 \psi_1 \|_{L^{\oo}_r (D)} \| \na \p_{x_1} (\f {\p_r \psi_1}r  ) \|_{L^2_r (D)}
 + \| \na (\f {\p_r \psi_1}r  ) \|_{L^4_r (D)} \| \na \p_{x_1}^2 \p_r \psi_1 \|_{L^4_r (D)}
   \\\no
 && \q
 + \| \f {\p_r \psi_1}r \|_{L^{\oo}_r (D)} \| \na  \p_{x_1}^2 \p_r \psi_1  \|_{L^2_r (D)}\bigg)
 + C_* \| \na \Upsilon \|_{L^4_r (D)} \bigg(\| \na (\f {\p_r \psi_1  }r ) \|_{L^4_r (D)} \| \p_{x_1 r}^2 \psi_1 \|_{L^{\oo}_r (D)} + \| \f {\p_r \psi_1  }r \|_{L^{\oo}_r (D)}  \\\no
 && \q
 \times \| \na \p_{ x_1 r}^2 \psi_1 \|_{L^4_r (D)} \bigg)
  + C_* \| \na^2 \Upsilon \|_{L^2_r (D)} \| \f {\p_r \psi_1  }r \|_{L^{\oo}_r (D)} \| \na \p_r \psi_1 \|_{L^{\oo}_r (D)}
 + C_* \bigg(\| \p_{x_1}^2 \Upsilon \|_{L^4_r (D)} \| \na (\f {\p_r \psi_1  }r ) \|_{L^4_r (D)}  \\\no
 && \q
 + \| \na \p_{x_1}^2 \Upsilon \|_{L^2_r (D)}  \| \f {\p_r \psi_1  }r \|_{L^{\oo}_r (D)}\bigg) \| \p_r \psi_1 \|_{L^{\oo}_r (D)}
 \le C_* \| \Upsilon \|_{H^3_r (D)} ( \eps + \| \psi \|_{H^4_r (D)})^2
 \le C_*  (\epsilon+\de_0)^2.
 \ee
 These give \eqref{coe9}.

 Subsequently, we demonstrate \eqref{coe14}.
  \be\no
 && \| F (\na \psi_1 ) \|_{H^3_r (D)} \le C_* \| \Upsilon \|_{H^3_r (D)} (
  \| \p_{x_1} \psi_1 \|_{H^3_r (D)}^2 + \| (\p_r \psi_1 )^2 \|_{H^3_r (D)} )
 \le C_* ( \eps + \de_0)^2,
 \\\no
 && \| \mc{F} (\na \psi ) \|_{H^2_r (D)} \le 
 C_* \eps (\| k_{11} - \bar{k}_{11} \|_{H^2_r (D)}\| \p_{x_1}^2 \psi_0 \|_{H^2_r (D)} + \| k_1 - \bar{k}_1 \|_{H^2_r (D)} \| \p_{x_1} \psi_0 \|_{H^2_r (D)} \\\no
 && \q
 + \| \bar{k}_{11} \p_{x_1}^2 \psi_0 + \bar{k}_1 \p_{x_1} \psi_0 \|_{H^2_r (D)}  )
 + C_* \eps ( \| \f {k_{12}}r \|_{H^2_r (D)} \| r \p_{x_1 r}^2 \psi_0 \|_{H^2_r (D)} + \| \f {k_2}r \|_{H^2_r (D)} \| r \p_r \psi_0 \|_{H^2_r (D)}) \\\no
 && \q
 + \eps \| \f 1r \p_r (r \p_r \psi_0) \|_{H^2_r (D)}
 \le C_* (\eps (\eps + \de_0) + \eps + (\eps + \de_0)^2 ) \le C_* (\eps + (\eps + \de_0)^2).
 \ee

 Hence, the proof of \eqref{coe14} is completed.

{\bf Step 3.} The proof of boundary conditions \eqref{coe15}-\eqref{coe18}.

Since $\psi \in \Si_{\de_0}$, then $ k_{12} (\na \psi_1 ) (x_1, 0/1)=k_2 (\na \psi_1 )(x_1, 0/1)=0$ for $x_1 \in [L_0, L_1]$ and $\p_r \psi (L_0, r) = 0$ for $r \in [0, 1]$. One can further derive that $k_{12} (\na \psi_1 ) (L_0, r) = 0$ for any $r\in [1-\beta_0,1]$. It follows from \eqref{cp2} that
\begin{equation*}
 \p_r c^2 (\rho) (x_1, 0)  =   \p_r c^2 (\rho) (x_1, 1) = 0, \q \forall x_1 \in [L_0, L_1].
\end{equation*}
Then for all $x_1 \in [L_0, L_1]$, the following conditions hold,
\be\no
&& \p_r k_{11} (\na \psi_1 )(x_1, 0) =  \p_r k_{11} (\na \psi_1 ) (x_1, 1) = 0,\q  \p_r k_1 (\na \psi_1 ) (x_1, 0)  =  \p_r k_1 (\na \psi_1) (x_1, 1)  = 0, \\\no
&& \p_r F (\na \psi_1 ) (x_1, 0) =  \p_r F (\na \psi_1 ) (x_1, 1)  = 0, \q  \p_r^2 k_{12} (\na \psi_1) (x_1, 0)  = \p_r^2 k_{12} (\na \psi_1) (x_1, 1)= 0.
\ee

According to Lemma \ref{g2infinity} and the boundary condition \eqref{coe15}, one has
\be\no
\| \p_r k_{11} \|_{L^{\oo}_r (D)}=\| \p_r (k_{11} - \bar{k}_{11}) \|_{L^{\oo}_r (D)} \le c_* (\| \p_r (k_{11} - \bar{k}_{11}) \|_{L^2_r (D)} + \| \na (\p_r (k_{11} - \bar{k}_{11}) ) \|_{L^2_r (D)} \\\no
+ \| \na^2 (\p_r (k_{11} - \bar{k}_{11}) ) \|_{L^2_r(D)} ) \le c_* \| k_{11} - \bar{k}_{11} \|_{H^3_r (D)} \le C_* (\eps + \| \psi \|_{H^4_r (D)}) \le C_* (\eps + \de_0).
\ee
Similarly, there holds
\begin{equation*}
\| \p_r k_1 \|_{L^{\oo}_r (D)} \le c_* \| k_1 - \bar{k}_1 \|_{H^3_r (D)} \le C_* (\eps + \| \psi \|_{H^4_r (D)}) \le C_* (\eps + \de_0).
\end{equation*}
These give \eqref{coe19}. The proof of Lemma \ref{coe-estimate} is completed.
\end{proof}

{\bf Acknowledgement.} Weng is supported by National Natural Science Foundation of China (Grant No. 12071359, 12221001).

{\bf Statement.} There is no conflict of interests and there is no associated data for this manuscript.

\end{document}